\theoremstyle{plain}
\newtheorem{proposition}{Proposition}[section]
\newtheorem{theorem}[proposition]{Theorem}
\newtheorem{lemma}[proposition]{Lemma}
\theoremstyle{remark}
\newtheorem{ex}{Example}
\newtheorem{remark}[proposition]{Remark}
\newtheorem{definition}[proposition]{Definition}
\newcommand {\A}  {{\mathcal A}}
\newcommand {\M}  {{\mathbf M}}
\newcommand {\T}  {{\mathcal T}}
\newcommand {\NN}  {{\mathbb N}}
\newcommand {\RR}  {{\mathbb R}}
\newcommand {\lBF} {\mathbf{l}}
\newcommand {\uBF} {\mathbf{u}}
\newcommand {\vBF} {\mathbf{v}}
\newcommand {\hBF} {\mathbf{h}}
\numberwithin{equation}{section}
\title[Topological properties of a class of cubic Rauzy fractals]{Topological properties of a class of cubic Rauzy fractals}
\author{Beno\^it Loridant}
\address{
Montanuniversit\"at Leoben\\Lehrstuhl Mathematik \& Statistik\\ Franz Josef Strasse 18\\8700 Leoben Austria
}
\email{benoit.loridant@unileoben.ac.at}
\date{\today}
\begin{document}
\begin{abstract} We consider the substitution $\sigma_{a,b}$ defined by
$$\begin{array}{rlcl}
\sigma_{a,b}: & 1 & \mapsto & \underbrace{1\ldots 1}_{a}2 \\
& 2 & \mapsto & \underbrace{1\ldots 1}_{b}3 \\
& 3 & \mapsto & 1
\end{array}
$$ 
with $a\geq b\geq 1$. 
The shift dynamical system induced by $\sigma_{a,b}$ is measure theoretically isomorphic to an exchange of three domains on a compact tile $\T_{a,b}$ with fractal boundary. 

We prove that $\T_{a,b}$ is homeomorphic to the closed disk iff $2b-a\leq 3$. This solves a conjecture of Shigeki Akiyama posed in 1997. To this effect, we construct a H\"older continuous parametrization $C_{a,b}:\mathbb{S}^1\to\partial \T_{a,b}$ of the boundary of $\T_{a,b}$. As a by-product, this parametrization gives rise to an increasing sequence of polygonal approximations of $\partial \T_{a,b}$, whose vertices lye on $\partial \T_{a,b}$ and have algebraic pre-images in the parametrization. 
\end{abstract}
\thanks{This research was supported by the project P22-855 of the Austrian Science Fund (FWF) and by the project FAN-I1136 of the FWF and the ANR (Agence Nationale de la Recherche).}

\keywords{Substitutions, Rauzy fractals, Tilings, Automata, Homeomorphy to a disk} \subjclass[2010]{28A80, 54F65, 11A63} \maketitle

\begin{section}{Introduction}

In 1982, G. Rauzy studied the dynamical system generated by the substitution $\sigma(1)=12,\;\sigma(2)=13,\;\sigma(3)=1$ and proved that it is measure theoretically conjugate to a domain exchange on a compact subset $\T$ of the complex plane~\cite{Rauzy82}. Moreover, it has pure discrete spectrum and it is isomorphic to translation on the two dimensional torus. $\T$ has a self-similar structure and induces both a periodic and an aperiodic tiling of the plane. The results of Rauzy were generalized. A \emph{Rauzy fractal} $\T\subset\mathbb{R}^{d-1}$ can be attached to each irreducible unimodular Pisot substitution $\sigma$ on $d$ letters. The shift dynamical system generated by $\sigma$ is measure theoretically isomorphic to a domain exchange on $d$ subtiles of  $\T$, provided that $\sigma$ satisfies the combinatorial \emph{strong coincidence condition}~\cite{ArnouxIto01,CanteriniSiegel01b}. If $\sigma$ satisfies the \emph{super coincidence condition}, the shift dynamical system has even pure discrete spectrum and is measure theoretically isomorphic to a translation on the $(d-1)$ dimensional torus (\cite{ItoRao06a,BargeKwapisz06}). In this case, the tile $\T$ induces a periodic tiling and the subtiles $\T(i)$ for $i\in\{1,\ldots,d\}$ an aperiodic self-replicating tiling of $\mathbb{R}^{d-1}$ \cite{ItoRao06a}. In fact, the outstanding \emph{Pisot conjecture} states that the dynamical system generated by every irreducible unimodular Pisot substitution has pure discrete spectrum. 

There is a vast literature on Rauzy fractals, as they appear naturally in many domains. In $\beta$-numeration (\cite{Thurston89}), finiteness properties of digit representations are related to the fact that $0$ is an inner point of the Rauzy fractal, and the intersection of the Rauzy fractal with lines allows to characterize the rationals numbers with purely periodic expansion~\cite{AkiyamaBaratBertheSiegel08}. In Diophantine approximation, best simultaneous approximations are obtained by computing the size of the largest ball inside the Rauzy fractal~\cite{HubertMessaoudi06}. Rauzy fractals also play an important r\^ole in the construction of Markov partitions for toral automorphisms.  It is known that every hyperbolic automorphism of the $d$-dimensional torus admits a Markov partition~\cite{Sinai68,Bowen70}.  For $d=2$, the partition is made of rectangles~\cite{AdlerWeiss70}. However, for $d>2$, the partition can not have a smooth boundary~\cite{Bowen78}. Markov partitions for hyperbolic toral automorphisms were explicitly constructed in~\cite{Praggastis92,Praggastis99,ItoOtsuki93} using cylinders whose bases are the original subtiles of the Rauzy fractals. Whenever the Rauzy fractal is homeomorphic to the closed disk, the situation remains close to the case $d=2$, as the Markov partition consists in topological $3$-dimensional balls.

 In their monograph~\cite{SiegelThuswaldner10}, Siegel and Thuswaldner give algorithms to check topological properties such as  tiling property, connectedness or homeomorphy to the closed disk  for any given Pisot unimodular substitution. These criteria use graphs and rely on the self-similar structure of the Rauzy fractals. However, it is usually more difficult to describe the topological properties for whole families of Rauzy fractals.
 
In this paper, we consider the Rauzy fractals $\T_{a,b}$ associated with the substitutions
\[\begin{array}{rlcl}
\sigma_{a,b}: & 1 & \mapsto & \underbrace{1\ldots 1}_{a\textrm{ times}}2 \\
& 2 & \mapsto & \underbrace{1\ldots 1}_{b\textrm{ times}}3 \\
& 3 & \mapsto & 1
\end{array}\]
over the alphabet $\{1,2,3\}$, where $a\geq b\geq 1$. For every such parameters $a,b$, $\sigma_{a,b}$ is an irreducible primitive unimodular Pisot substitution. Moreover, it satisfies the super coincidence condition~\cite{BargeKwapisz06,Solomyak92}. Therefore, $\T_{a,b}$ induces a periodic tiling and its subtiles $\T_{a,b}(i)$ ($i=1,2,3$) an aperiodic self-replicating tiling of the plane.

We will show that $\T_{a,b}$ is homeomorphic to the closed disk if and only if $2b-a\leq 3$. 
This solves a conjecture of Shigeki Akiyama announced in 1997~\cite{AkiyamaNote1,AkiyamaNote2}. To this effect, we will construct a parametrization of the boundary of $\T_{a,b}$. A standard method for the boundary parametrization of self-affine tiles was proposed by Shigeki Akiyama and the author in~\cite{AkiyamaLoridant11}. We will be able to extend this construction for the boundary of our substitution tiles, as it mainly relies on the graph-directed self-similar structure of the boundary. A by-product of the parametrization is a sequence of boundary approximations whose way of generation is analogous to Dekking's recurrent set method~\cite{Dekking82b,Dekking82a}.

We mention existing results. In the case $b=1$, the tiles $\T_{a,1}$ were shown to be disk-like and the Hausdorff dimension of their boundary was computed by Messaoudi~\cite{Messaoudi00,Messaoudi06} via a boundary parametrization, but the technique used to parametrize would not generalize to the non disk-like tiles. In~\cite{ItoKimura91}, Ito and Kimura  produced the boundary of $\T_{1,1}$ by Dekking's fractal generating method, making use of higher dimensional geometric realizations of the Tribonacci substitution. This also allowed the computation of the Hausdorff dimension of the boundary. They could generalize their method in~\cite{SanoArnouxIto01}. In~\cite{Thuswaldner06}, Thuswaldner computed the so-called contact graph, related to the aperiodic tilings induced by $\T_{a,b}$, for the whole class of substitutions $\sigma_{a,b}$ and deduced the Hausdorff dimension of the boundary of $\T_{a,b}$. This graph will be of great importance in our parametrization procedure. In~\cite{LoridantMessaoudiSurerThuswaldner13}, the non-disk-likeness for the parameters satisfying $2b-a>3$ was proved. Indeed, the authors obtained a subgraph of the lattice boundary graph, associated with the periodic tiling induced by $\T_{a,b}$, for all parameters $a\geq b\geq 1$. It turned out that for $2b-a>3$, the number of states in this graph, which is also the number of neighbors of $\T_{a,b}$ in the periodic tiling, is strictly larger than 8. However, in a periodic tiling induced by a topological disk, the tiles have either $6$ or $8$ neighbors~\cite{GruenbaumShephard89}. Therefore, $\T_{a,b}$ is not homeomorphic to a disk. We will recover this result by another method based only on the contact graphs, showing that the parametrization is not injective for these parameters. The proof of the counterpart is more intricate, as it consists in showing the injectivity of the parametrization for $2b-a\leq3 $: this requires rather involved computations on B\"uchi automata. 

The paper is organized as follows. In Section~\ref{sec:mainresults}, we recall basic facts concerning our class of substitutions and formulate our main results. In Section~\ref{sec:GIFS}, we introduce two graphs that are essential in our work: the boundary graph $\mathcal{G}_{0,a,b}$, that describes the whole language of the boundary of $\T_{a,b}$, and a subgraph $G_{0,a,b}\subset\mathcal{G}_{0,a,b}$, whose language is large enough to cover the boundary. In Section~\ref{sec:boundparam}, we use the graph $G_{0,a,b}$ to construct the boundary parametrization, proving Theorem~\ref{ParamTheo}. Section~\ref{sec:prooftheo} is devoted to the proof of Theorem~\ref{DNDTheo}. If $2b-a\leq 3$, then $G_{0,a,b}=\mathcal{G}_{0,a,b}$ and we can show that the parametrization is injective. Therefore, $\partial\T_{a,b}$ is a simple closed curve and $\T_{a,b}$ is disk-like. Otherwise, the complement of $G_{0,a,b}$ in $\mathcal{G}_{0,a,b}$ is nonempty and we can find a redundant point in the parametrization.  Finally, in Section~\ref{sec:conc}, we add some comments and questions for further work.

\emph{Acknowledgements.} The author is grateful to Shigeki Akiyama and Shunji It\={o} for mentioning the conjecture and for the motivating discussions on this subject.
\end{section}

\begin{section}{Main results}\label{sec:mainresults}

We wish to study the topological properties of a class tiles arising from a family of substitutions. 
\subsection{Substitutions $\sigma_{a,b}$}
Let $\A:=\{1,2,3\}$ be the \emph{alphabet}. We denote by $\A^*$ the set of finite words over $\A$, including the empty word $\varepsilon$. For $a\geq b\geq 1$, we call $\sigma=\sigma_{a,b}:\A^*\to\A^*$ the mapping
\begin{equation}\label{DefSubst}
\begin{array}{rlcl}
\sigma: & 1 & \mapsto & \underbrace{1\ldots 1}_{a\textrm{ times}}2 \\
& 2 & \mapsto & \underbrace{1\ldots 1}_{b\textrm{ times}}3 \\
& 3 & \mapsto & 1,
\end{array}
\end{equation}
extended to $\A^*$ by concatenation. 

For a word $w\in\A^*$, we write $|w|$ its \emph{length} and  $|w|_a$ the number of occurrences of a letter $a$ in $w$. We define the \emph{abelianization mapping}
$$\lBF:w\in\A^*\mapsto \left(|w|_a\right)_{a\in\A}\in\mathbb{N}^3
$$ 
The \emph{incidence matrix} $\M$ of the substitution $\sigma$ is the $3\times 3$ matrix obtained by abelianization:
\begin{equation}\label{rel:abel}
\lBF(\sigma(w))=\M\lBF(w)
\end{equation}

for all $w\in\A^*$. Thus we have

\[\M=\left(\begin{array}{ccc} a & b & 1 \\ 1 & 0 & 0 \\ 0 & 1 & 0 \end{array}\right).\]

$\M$ is a primitive matrix, {\it i.e.}, $\M^k$ has only strictly positive entries for some power $k \in \NN$ (here, $k=3$). We denote by  $\beta$ the corresponding dominant Perron-Frobenius eigenvalue, satisfying $\beta^3=a\beta^2+b\beta+1$.
The substitution $\sigma$ has the following properties. It is
\begin{itemize}
\item  {\it primitive}: the incidence matrix $\M$ is a primitive matrix;
\item {\it unimodular}: $\beta$ is an algebraic unit;
\item {\it irreducible}: the algebraic degree of $\beta$ is exactly $|{\A}|=3$;
\item {\it Pisot}: the Galois conjugates $\alpha_1,\alpha_2$ of $\beta$ satisfy $|\alpha_1|,|\alpha_2|<1$ (see~\cite{Brauer51}).
\end{itemize}

\subsection{Associated Rauzy fractals $\T_{a,b}$}\label{subsec:RF} We turn to the construction of the Rauzy fractals associated with the substitution $\sigma$. 

Let ${\bf v}_\beta$ be a strictly positive left eigenvector of
$\M$ for the dominant eigenvalue $\beta$  and $\uBF_\beta$ a strictly positive right eigenvector with coordinates in $\mathbb{Z}[\beta]$, satisfying $\langle\uBF_\beta,\vBF_\beta\rangle =1$.
Moreover, let  $\uBF_{\alpha_i}$ be the eigenvectors for the Galois conjugates obtained by replacing $\beta$ by $\alpha_i$ in the coordinates of the vector  $\uBF_\beta$. We obtain the decomposition
\[\mathbb{R}^3=\mathbb{H}_e\oplus\mathbb{H}_c,\]
where
\begin{itemize}
\item $\mathbb{H}_e$ is the \emph{expanding line}, generated by ${\bf u}_{\beta}$,
\item $\mathbb{H}_c$ is the \emph{contracting plane}, generated by ${\bf u}_{\alpha_1},{\bf u}_{\alpha_2}$ (or by $\Re({\bf u}_{\alpha_1}),\Im({\bf u}_{\alpha_1})$ whenever $\alpha_1,\alpha_2$ are complex conjugates).
\end{itemize}
We denote by $\pi: \RR^3 \rightarrow \mathbb{H}_c$ the projection onto $\mathbb{H}_c$ along $\mathbb{H}_e$ and by $\hBF$ the restriction of $\M$ on the contractive plane $\mathbb{H}_c$. Note that if we define the norm 
$$||{\bf x}||=\max\left\{|\langle{\bf x},{\bf v}_{\alpha_1}\rangle|,|\langle{\bf x},{\bf v}_{\alpha_2}\rangle|\right\},
$$
then $\hBF$ is a contraction with $||{\bf hx} ||\leq {\rm max}\{|\alpha_1|,|\alpha_2|\}|{|\bf x}||$ for all ${\bf x} \in \mathbb{H}_c$.

Furthermore, we have
\begin{equation}\label{fc}
\forall w \in \A^*,\quad \hBF(\pi({\bf l}(w)))= \pi(\M{\bf l}(w)) = \pi({\bf l}(\sigma(w))).
\end{equation}

The fixed point $w=w_0w_1w_2\cdots=\lim_{k\to\infty}\sigma^k(1)\in\A^{\mathbb{N}}$ imbeds into $\mathbb{R}^3$ as a discrete line with vertices $\{\lBF(w_0\cdots w_n);n\in\mathbb{N}\}$. The assumption that $\sigma$ is a Pisot substitution implies that this broken line remains at a bounded distance of the expanding line. Projecting the vertices of the discrete line on the contracting plane, we obtain the Rauzy fractal of $\sigma$ (see \cite{ArnouxIto01}):
$$\begin{array}{rcl}
\mathcal{T}=\T_{a,b}&=&\overline{\{\pi\circ \lBF(w_0w_1\ldots w_{n-1});n\in\mathbb{N}\}},\\\\
\forall i\in\A,\;\mathcal{T}(i)=\T_{a,b}(i)&=&\overline{\{\pi\circ \lBF(w_0w_1\ldots w_{n-1});w_n=i,n\in\mathbb{N}\}}.
\end{array}
$$
For our purpose, we will need to view the Rauzy fractals as solution of a \emph{graph directed iteration function system} (\emph{GIFS}, see~\cite{MauldinWilliams88}). The appropriate graph is the \emph{prefix-suffix graph}, defined as in~\cite{CanteriniSiegel01a}: 
\begin{itemize}
\item vertices: the letters of $\A$;
\item edges: $i\xrightarrow{p}j$ if and only if $\sigma(j)=pis$ for some $s\in\A^*$.
\end{itemize}
The prefix-suffix graph $\Gamma=\Gamma_{a,b}$ of $\sigma$ is depicted on Figure~\ref{PrefSuff}.
\begin{figure}
\begin{center}
\includegraphics[width=105mm,height=35mm]{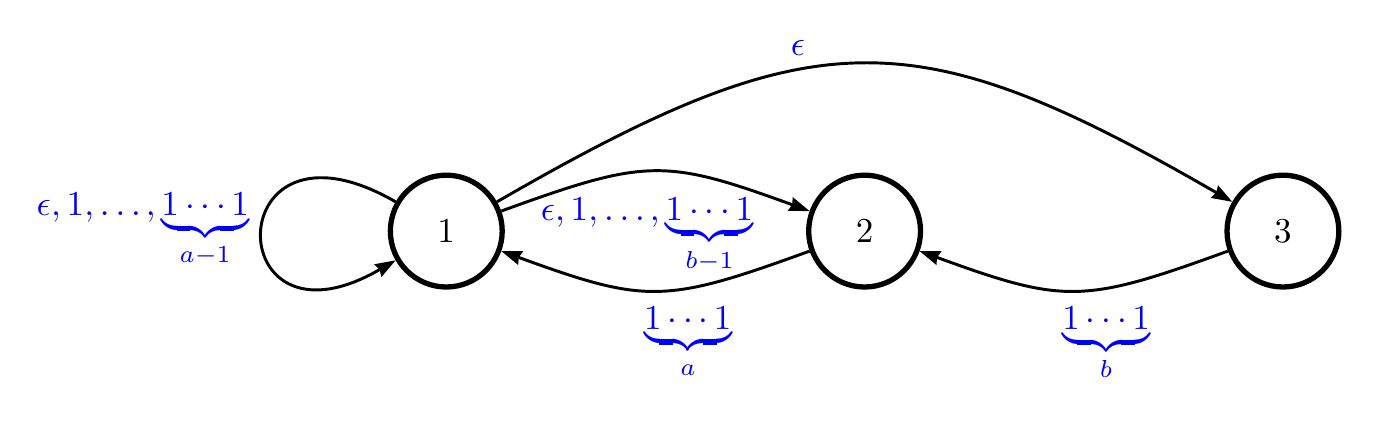}
\end{center}
 \caption{Prefix-suffix graph: $i\xrightarrow{p}j\in\Gamma\iff \sigma(j)=pis$.}\label{PrefSuff}
\end{figure}

Since $\sigma$ is a primitive unimodular Pisot substitution, $\T$ is the attractor of the GIFS  defined by the prefix-suffix graph (see for example~\cite{BertheSiegel05}):
\begin{equation}\label{TileGIFS}
\begin{array}{c}
\forall i\in\mathcal{A},\;\mathcal{T}(i) =  \bigcup_{i\xrightarrow{p}j} \mathbf{h}\mathcal{T}(j) + \pi \lBF ( p),\\\\
\T=\bigcup_{i=1}^3\T(i).
\end{array}
\end{equation}

From this GIFS structure we deduce that the Rauzy fractal and its subtiles are a geometric representation of the language of the prefix-suffix graph~\cite{CanteriniSiegel01b}:
\[\T= \left\{\sum_{k \geq 0}\hBF^k \pi(\lBF(p_k));\, i_0\xrightarrow{p_0}i_1\xrightarrow{p_1}i_{2}\xrightarrow{p_2}  \ldots\in \Gamma\right\}\]
and for $i\in\A$
\begin{equation}\label{deftiles}
\T(i)= \left\{\sum_{k \geq 0}\hBF^k \pi(\lBF(p_k));\,  i_0=i\xrightarrow{p_0}i_1\xrightarrow{p_1}i_{2}\xrightarrow{p_2}  \ldots\in \Gamma\right\}.
\end{equation}

There are other equivalent constructions of the Rauzy fractal. An overview of the different methods can be found in~\cite{BertheRigo10}. 

Fundamental topological properties of these Rauzy fractals can be found in the literature. 
\begin{itemize}
\item[(1)] $\mathcal{T}$ is a compact set and $\mathcal{T}=\overline{{\mathcal{T}}^o}$.
\item[(2)]  For $i=1,2,3$,  the subtile $\mathcal{T}(i)$ is a compact set and $\mathcal{T}(i)=\overline{{\mathcal{T}(i)}^o}$.
\item[(3)] The subtiles induce an aperiodic tiling of the contracting plane. Let $(\mathbf{e}_1,\mathbf{e}_2,\mathbf{e}_3)$ be the canonical basis of $\mathbb{R}^3$. The tiling set is 
$$\Gamma_{srs}:=\left\{[\pi(\mathbf{x}),i]\in\pi(\mathbb{Z}^3)\times\mathcal{A}\;;\;0\leq \langle\mathbf{x},\mathbf{v}_\beta\rangle< \langle\mathbf{e}_i,\mathbf{v}_\beta\rangle\right\}
$$
and
\begin{equation}\label{AperTiling}
\begin{array}{c}
\forall\; [\gamma,i]\ne[\gamma',j]\in\Gamma_{srs},\;(\T(i)+\gamma)^o\cap(\T(j)+\gamma')^o=\emptyset,\\\\
\mathbb{H}_c=\bigcup_{[\gamma,i]\in\Gamma_{srs}}\mathcal{T}(i)+\gamma.
\end{array}
\end{equation}

\end{itemize} 
(1) and (2) hold because $\sigma$ is a primitive unimodular Pisot substitution~\cite{SirventWang02}. (3) is a consequence of the combinatorial \emph{super coincidence condition} satisfied by $\sigma$. Indeed, Solomyak~ \cite{Solomyak92} proved in 1992 that the associated dynamical system has pure discrete spectrum, and Barge and Kwapisz~\cite{BargeKwapisz06} showed in 2006 that this is equivalent to the super coincidence condition for the substitution. By \cite{ItoRao06a}, the subtiles $\T(i)$ ($i=1,2,3$) induce the aperiodic tiling of the plane (\ref{AperTiling}). This tiling is also self-replicating (see~\cite[Chapter 3]{SiegelThuswaldner10}). Examples are depicted in Figure~\ref{AperTilingEx}.

\begin{figure}
\begin{center}
\begin{tabular}{cc}
\includegraphics[width=75mm,height=45mm]{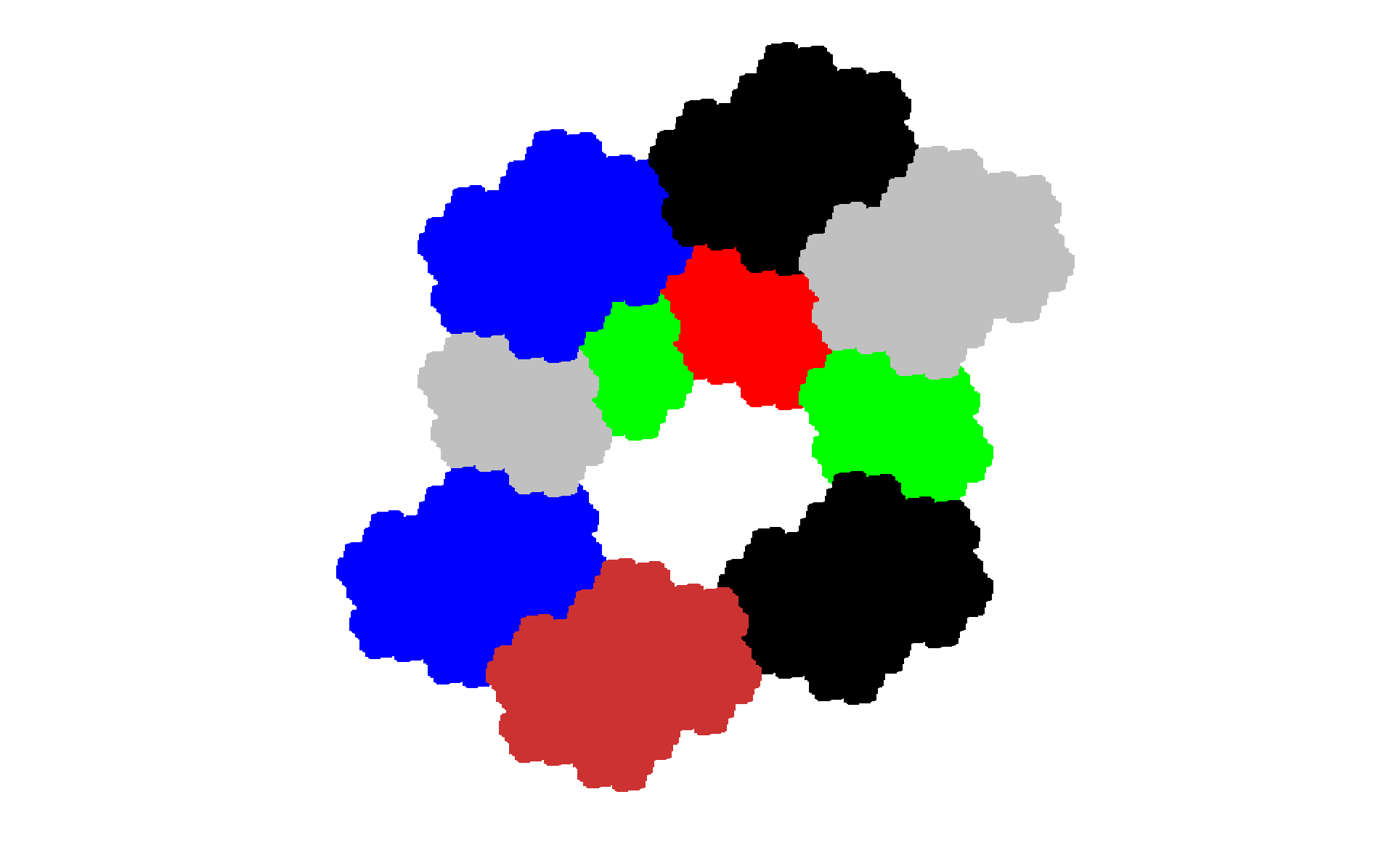}
&
\includegraphics[width=75mm,height=45mm]{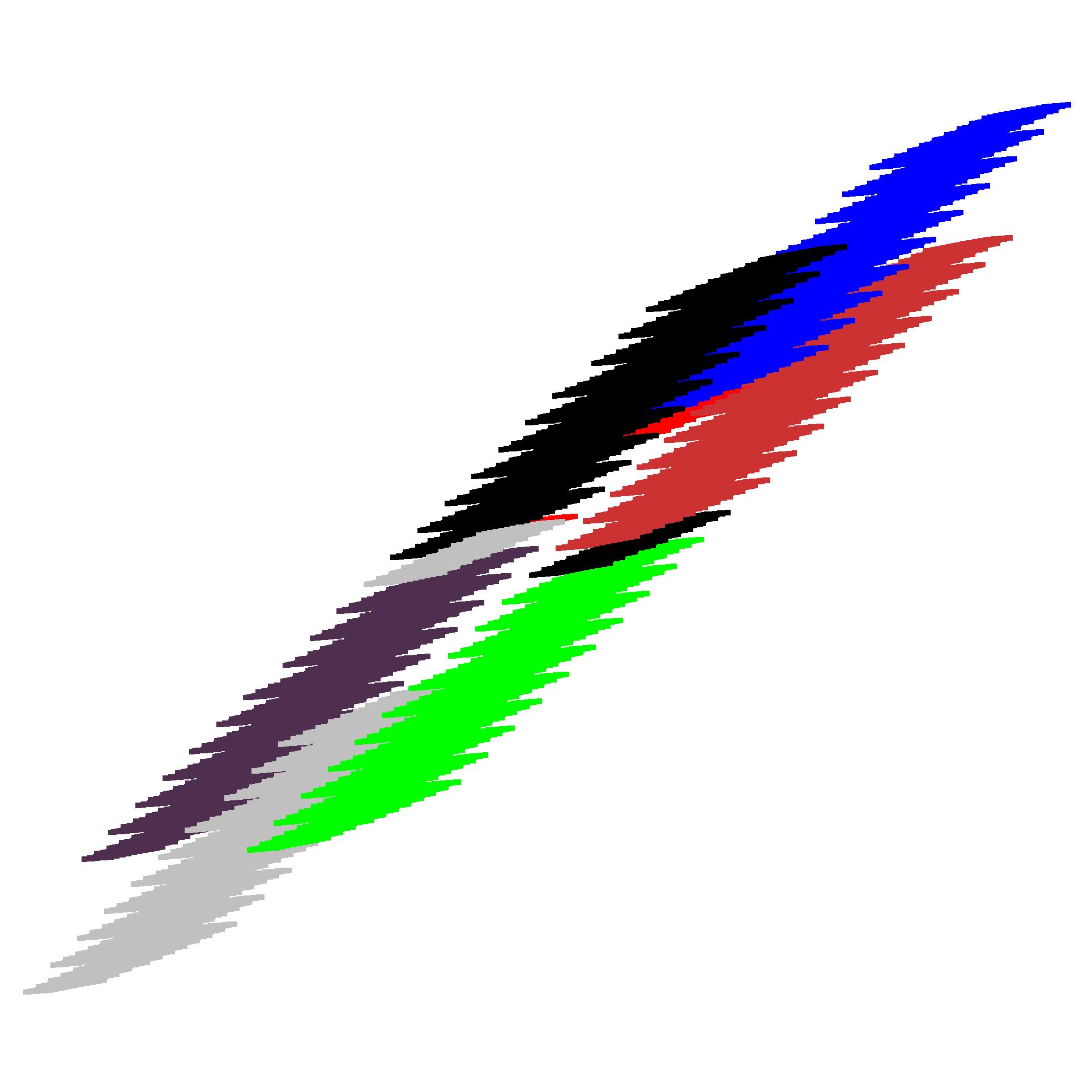}
\\
Tribonacci substitution
&
Substitution $\sigma_{7,10}$
\end{tabular}
\caption{Aperiodic self-replicating tilings of the contracting plane}\label{AperTilingEx}
\end{center}
\end{figure}

In this paper, we will prove the following theorem. 

\begin{theorem}\label{DNDTheo}
Consider the substitution $\sigma_{a,b}$ ($a\geq b\geq 1$) defined in~(\ref{DefSubst}) and let $\T_{a,b}$  be its Rauzy fractal. Then 
$$\T_{a,b} \textrm{ is homeomorphic to a closed disk } \iff 2b-a\leq 3.
$$ 
\end{theorem}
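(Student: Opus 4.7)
The plan is to exploit the Hölder continuous surjective parametrization $C_{a,b}:\mathbb{S}^1\to\partial\T_{a,b}$ constructed in Theorem~\ref{ParamTheo} (built from the subgraph $G_{0,a,b}$ of the boundary graph $\mathcal{G}_{0,a,b}$) and to reduce the disk-likeness question to a combinatorial statement about these two graphs. Concretely, I would aim to prove the equivalence
\[
\T_{a,b}\text{ is a topological disk}\;\Longleftrightarrow\; C_{a,b} \text{ is injective}\;\Longleftrightarrow\; G_{0,a,b}=\mathcal{G}_{0,a,b}\;\Longleftrightarrow\; 2b-a\le 3.
\]
The first equivalence follows from general topology: since $C_{a,b}$ is a continuous surjection from $\mathbb{S}^1$ onto $\partial\T_{a,b}$, if it is injective it is a homeomorphism onto a Jordan curve; the Jordan--Schoenflies theorem together with property~(1) above (namely $\T_{a,b}=\overline{\T_{a,b}^o}$) and the fact that $\T_{a,b}$ is the closure of the bounded component of $\mathbb{R}^2\setminus\partial\T_{a,b}$ then yields that $\T_{a,b}$ is homeomorphic to the closed disk. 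Conversely, if $C_{a,b}$ identifies two distinct parameters, the image of $\mathbb{S}^1$ is not a simple closed curve and hence cannot bound a topological disk.

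For the forward implication $2b-a\le 3\Rightarrow$ disk, I would first establish the arithmetic statement that under this condition the two graphs coincide, $G_{0,a,b}=\mathcal{G}_{0,a,b}$; this is a finite-range check on the states and edges of $\mathcal{G}_{0,a,b}$ depending on the parameters $a,b$. Once this is in hand, a double point of $C_{a,b}$ corresponds (via the GIFS decomposition~(\ref{TileGIFS})) to an infinite walk in a certain product Büchi automaton whose states label pairs $(s,t)\in\mathbb{S}^1\times\mathbb{S}^1$ with $s\ne t$ and $C_{a,b}(s)=C_{a,b}(t)$; the key point is that such walks are forced to use transitions lying in $\mathcal{G}_{0,a,b}\setminus G_{0,a,b}$, which is empty. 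The only identifications left are the unavoidable ones $C_{a,b}(0)=C_{a,b}(1)$ coming from the parametrization itself, and these preserve injectivity on the circle. This step \emph{is} the main obstacle: the required computation on Büchi automata is where genuine case work enters, and controlling the edges of $\mathcal{G}_{0,a,b}$ across the entire infinite family $\{\sigma_{a,b}:2b-a\le 3\}$ is nontrivial.

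For the reverse implication, assume $2b-a>3$. Then $\mathcal{G}_{0,a,b}\setminus G_{0,a,b}$ is nonempty, and I would exhibit an explicit ultimately periodic edge-labelled walk in this difference that represents a point $x\in\partial\T_{a,b}$ with at least two distinct preimages $s\ne t\in\mathbb{S}^1$ under $C_{a,b}$. Producing the redundant point amounts to tracing, along the walk, two different expansions of the same element of $\T_{a,b}$ as a series $\sum_{k\ge 0}\hBF^k\pi(\lBF(p_k))$, using the GIFS equations. This contradicts injectivity and, by the reduction above, prevents $\T_{a,b}$ from being a disk. Alternatively, one can invoke the result from~\cite{LoridantMessaoudiSurerThuswaldner13} that $\T_{a,b}$ has strictly more than~$8$ neighbors in this regime, incompatible with the $6$-or-$8$-neighbor rule of~\cite{GruenbaumShephard89} for periodic disk tilings; I would include this as a cross-check but prefer the parametrization-based argument since it ties in directly with the framework developed in Sections~\ref{sec:GIFS} and~\ref{sec:boundparam}.
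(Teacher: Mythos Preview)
Your overall strategy matches the paper's: reduce disk-likeness to injectivity of the parametrization $C$ and decide injectivity via graph/automata computations. However, the middle equivalence you posit, that $C$ is injective if and only if $G_{0,a,b}=\mathcal{G}_{0,a,b}$, is not what the paper proves and does not hold in the simple form you state.

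For the disk-like direction, the equality $G_{0,a,b}=\mathcal{G}_{0,a,b}$ (which indeed holds for $2b-a\le 3$, see Proposition~\ref{BoundGraphProp}) is used only to ensure that Lemma~\ref{lem:EqAd} characterizes boundary identifications in terms of walks in $G_0$. It does \emph{not} follow that double points of $C$ must use edges in $\mathcal{G}_0\setminus G_0$: distinct admissible walks $w,w'$ in $G^+$ can satisfy $\psi(P(w))=\psi(P(w'))$ while lying entirely in $G_0$ (two walks carrying the same prefix sequence, or two walks matched through a common neighbor via~(\ref{eq:SamePoint})). The paper therefore constructs two product automata, $\mathcal{A}^{\psi}$ and $\mathcal{A}^{sl}$, built from $G^+$ alone, and verifies for each parameter range that every admissible walk in these automata corresponds to one of the trivial identifications~(\ref{identifpairs}). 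That verification is the substance of the disk-like proof; your plan acknowledges the B\"uchi-automata step but mislocates what is being checked.

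For the non-disk-like direction, your proposed route---exhibit a walk in $\mathcal{G}_{0,a,b}\setminus G_{0,a,b}$---is not available: the full boundary graph $\mathcal{G}_{0,a,b}$ has not been computed for $2b-a>3$ (the paper says so explicitly before Proposition~\ref{BoundGraphProp}). The paper instead works entirely inside $G^+$. When $2b-a>3$, the states $5=I$ and $6^{-}=H^{-}$ gain enough outgoing edges that one can write down two explicit admissible walks, starting from different states ($5$ and $12$ when $a\ge b+1$, resp.\ $5$ and $11$ when $a=b$), carrying the same infinite prefix sequence $(p_k)$; hence $C(t)=C(t')$ with $t\ne t'$ and the identification is visibly not of the form~(\ref{identifpairs}). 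No knowledge of $\mathcal{G}_0$ beyond $G_0$ is required. Your alternative via the neighbor count from~\cite{LoridantMessaoudiSurerThuswaldner13} is correct and is mentioned in the paper, but the parametrization argument is self-contained and simpler than your outline suggests.
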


\begin{figure}
\begin{center}
\begin{tabular}{cc}
\includegraphics[width=60mm,height=50mm]{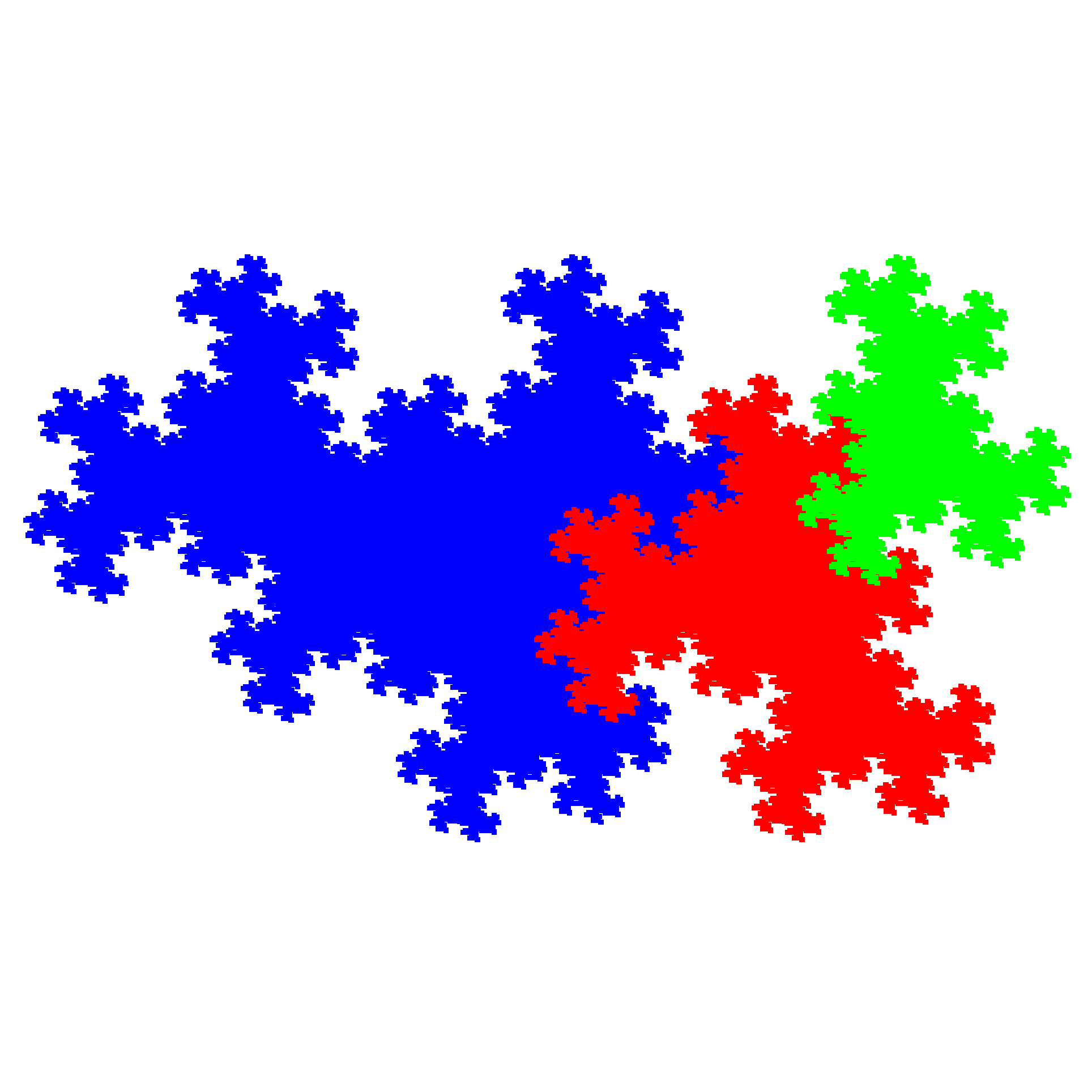}&\includegraphics[width=60mm,height=50mm]{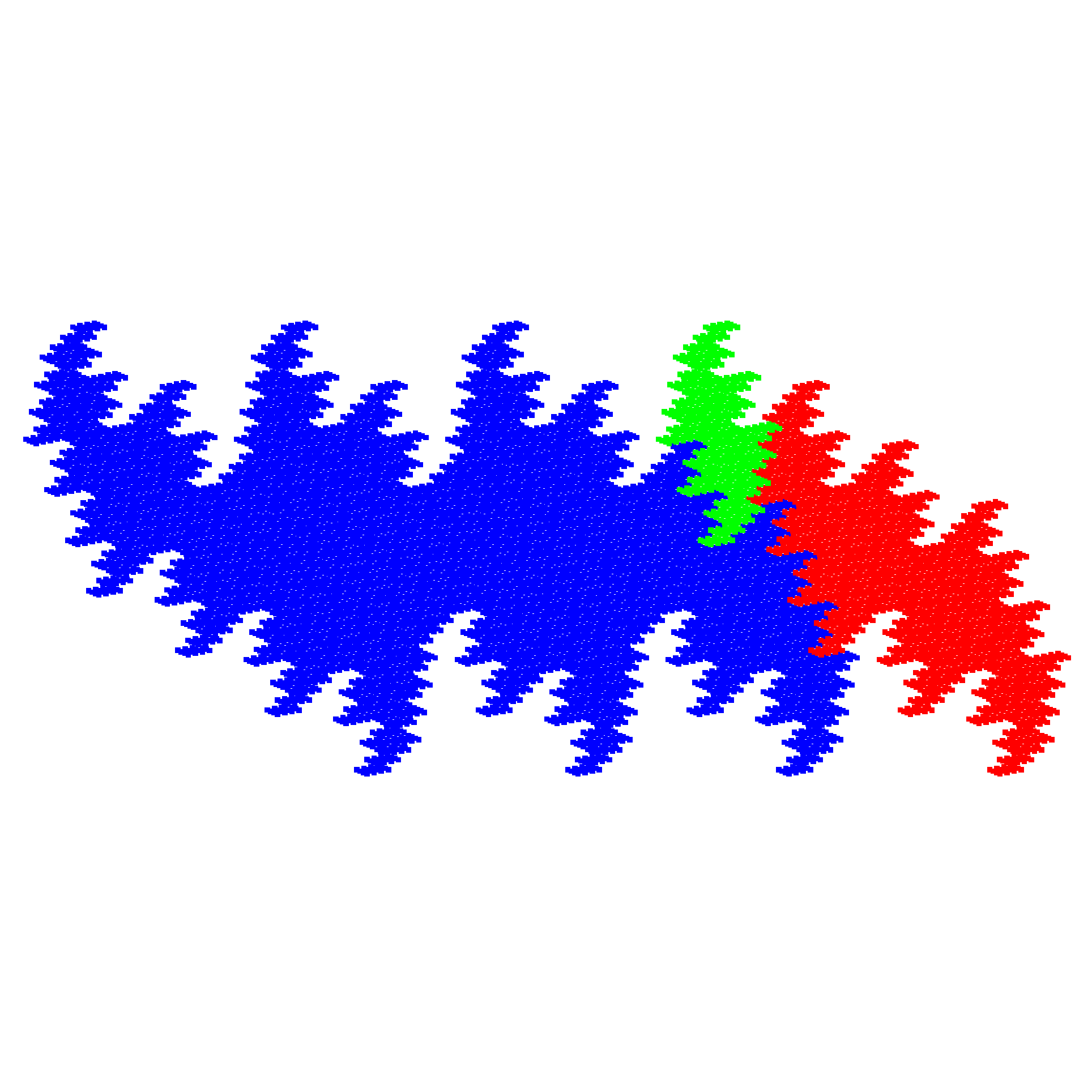}\\
$\sigma_{1,2}$&$\sigma_{3,3}$\\
\includegraphics[width=60mm,height=50mm]{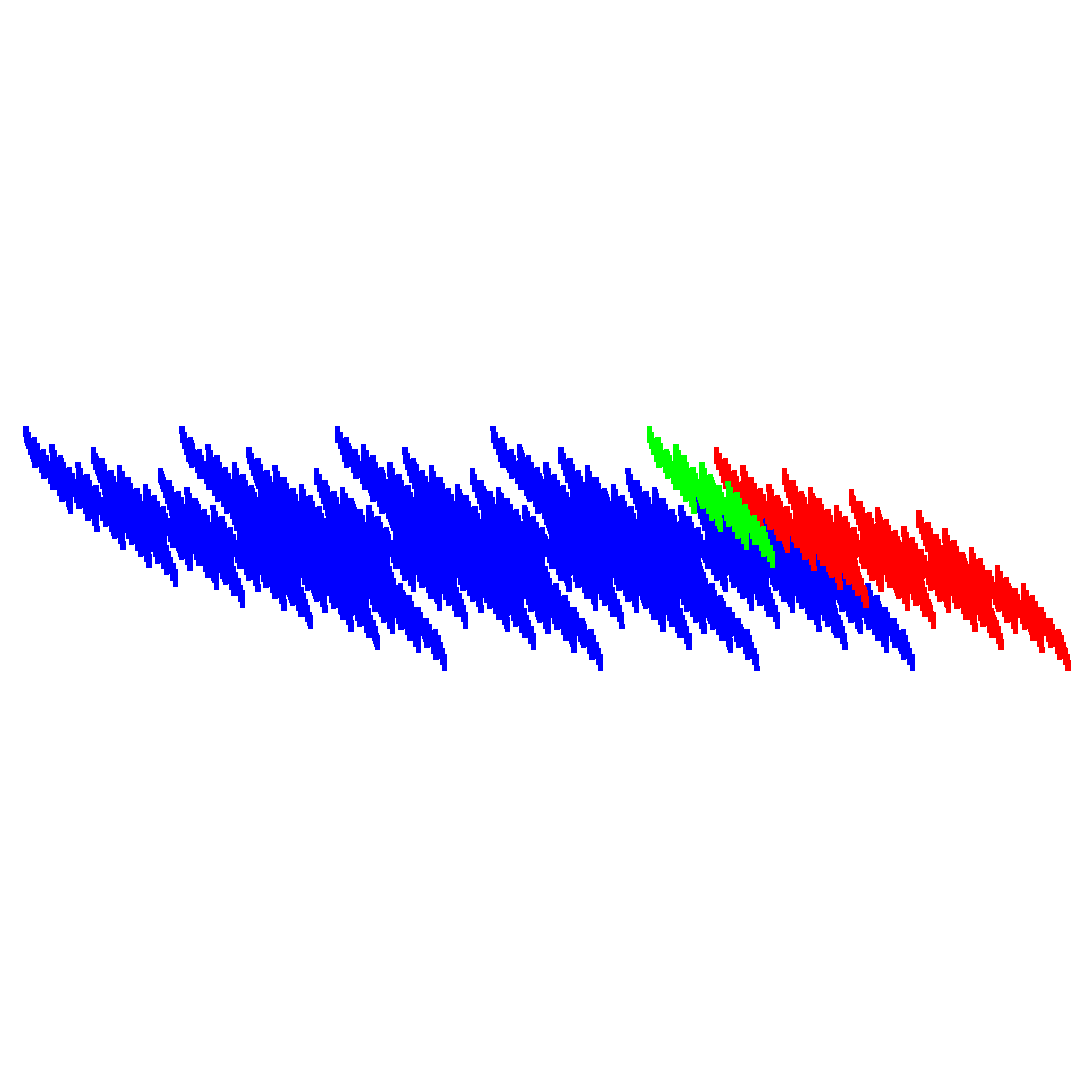}&\includegraphics[width=60mm,height=50mm]{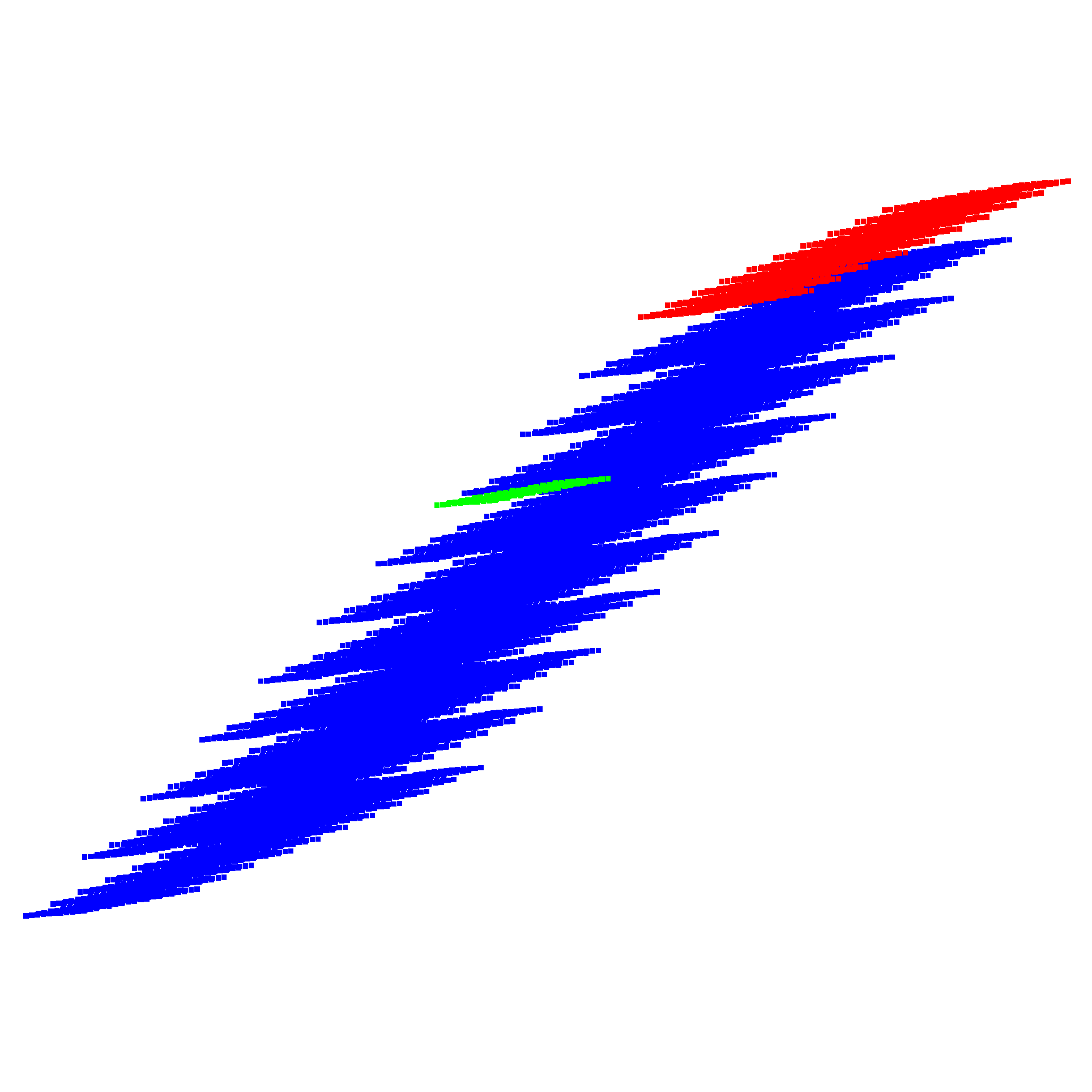}\\
$\sigma_{4,4}$&$\sigma_{7,10}$
\end{tabular}
\caption{Disk-like (above) and non disk-like (below) cubic Rauzy fractals}\label{ExDND}
\end{center}
\end{figure}

Some examples can be seen on Figure~\ref{ExDND}. The cases $a=b=1$ and $a\geq b=1$ were treated in~\cite{Messaoudi00,Messaoudi06}, where it was shown that the Rauzy fractals are quasi-circles. Also, it was proved in~\cite{LoridantMessaoudiSurerThuswaldner13} that $\T_{a,b}$ can not be homeomorphic to a closed disk as soon as $2b-a>3$.  We will recover all these results by another method. Indeed, in order to prove Theorem~\ref{DNDTheo}, we will construct a parametrization of the boundary of $\T$. This parametrization will have the following properties.

\begin{theorem}\label{ParamTheo}
Consider the substitution $\sigma=\sigma_{a,b}$ ($a\geq b\geq 1$) defined in~(\ref{DefSubst}) and let $\T$  be its Rauzy fractal. Let $\lambda$ be the largest root of 
$$x^4+(1-b)x^3+(b-a)x^2-(a+1)x-1.$$ 
Then there exists a surjective H\"older continuous mapping $C:[0,1]\to\partial\mathcal{T}$ with $C(0)=C(1)$ 
and a sequence of polygonal curves $(\Delta_n)_{n\geq 0}$ such that 
\begin{itemize}
\item $\lim_{n\to\infty}\Delta_n=\partial \mathcal{T}$ (Hausdorff metric).
\item Denote by $V_n$ the set of vertices of  $\Delta_n$. Then 
$$V_n\subset V_{n+1}\subset C(\mathbb{Q}(\lambda)\cap[0,1]).$$
\end{itemize}
The H\"older exponent is $\displaystyle s=-\frac{\log |\alpha|}{\log |\lambda|}$, where $|\alpha|=\max\{|\alpha_1|,|\alpha_2|\}$.
\end{theorem}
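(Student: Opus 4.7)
The plan is to construct $C$ as the limit of a sequence of piecewise affine parametrizations $C_n:[0,1]\to\Delta_n$, where $\Delta_n$ is refined by a substitution rule read off the subgraph $G_{0,a,b}\subset\mathcal{G}_{0,a,b}$ introduced in Section~\ref{sec:GIFS}. The starting point is the observation that $\partial\T$, being described by $\mathcal{G}_{0,a,b}$, admits a graph-directed self-affine decomposition of the form $\partial\T=\bigcup_{s\in S}\hBF(B_s)+t_s$; intersecting this GIFS with $G_{0,a,b}$ extracts a cycle structure in the graph that will drive the refinement. I would first isolate an initial polygon $\Delta_0$ whose vertices are explicit boundary points of $\T$ (for instance intersections of $\T$ with well-chosen neighbors in the periodic tiling). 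These initial vertices are algebraic, given by series $\sum_k\hBF^k\pi\lBF(p_k)$ associated with eventually periodic walks in $G_{0,a,b}$, so they lie in $C(\mathbb{Q}(\lambda)\cap[0,1])$ once the parametrization is set up.

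Next, each edge of $\Delta_n$ corresponds to an arc of $\partial\T$ labelled by a state of $G_{0,a,b}$; the outgoing edges from that state in $G_{0,a,b}$ specify how the arc splits into finitely many sub-arcs, each of which is again a boundary piece labelled by a new state. I would carry this refinement over to $\Delta_n$ by replacing each edge with the polygonal path joining the endpoints of these sub-arcs, thus defining $\Delta_{n+1}\supset V_{n+1}\supset V_n$. The key linear-algebraic point is that the subdivision is governed by a nonnegative integer matrix $A$ built from the out-degrees of $G_{0,a,b}$; its dominant eigenvalue must coincide with $\lambda$, the largest root of $x^4+(1-b)x^3+(b-a)x^2-(a+1)x-1$. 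A positive (left, Perron) eigenvector $\mathbf{w}$ of $A$ then assigns a length $w_s>0$ to each state $s$, and I parametrize the edges of $\Delta_n$ by subintervals of $[0,1]$ whose lengths are proportional to $\lambda^{-n}w_s$. With this choice, the refinement is length-consistent: $C_{n+1}$ agrees with $C_n$ on the endpoints of the $\Delta_n$-edges, so $V_n\subset V_{n+1}$ and every vertex has a preimage in $\mathbb{Z}[\lambda^{-1}]\cap[0,1]\subset\mathbb{Q}(\lambda)\cap[0,1]$.

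To obtain the Hausdorff convergence $\Delta_n\to\partial\T$ and the H\"older continuity of the limit, I would use the two competing contractions: on the fractal side, $\hBF$ contracts by $|\alpha|=\max\{|\alpha_1|,|\alpha_2|\}$, so after $n$ refinement steps each arc of $\partial\T$ associated with a level-$n$ edge has diameter $O(|\alpha|^n)$ (using the uniform bound on the pieces in any GIFS attractor); on the interval side, each edge corresponds to an interval of length $O(\lambda^{-n})$. Consequently $\|C_{n+1}-C_n\|_\infty=O(|\alpha|^n)$, so $(C_n)$ is Cauchy and converges uniformly to a continuous $C$ with $C([0,1])\supset\overline{\bigcup_n V_n}=\partial\T$; surjectivity follows because the Hausdorff limit of the $\Delta_n$ is $\partial\T$ itself. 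For $|s-t|\leq\lambda^{-n}$ the points $C(s),C(t)$ lie in the union of two adjacent level-$n$ arcs, hence $|C(s)-C(t)|=O(|\alpha|^n)$; writing $n$ in terms of $|s-t|$ produces the H\"older exponent $s=-\log|\alpha|/\log|\lambda|$. The equality $C(0)=C(1)$ is then forced by closing up the initial polygon.

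The main obstacle, and the reason this requires more than a routine application of~\cite{AkiyamaLoridant11}, is establishing that $G_{0,a,b}$ really provides a \emph{covering} refinement rule with the correct combinatorics. Concretely I must check three nontrivial points: (i) every outgoing edge set of $G_{0,a,b}$ decomposes an arc of $\partial\T$ into arcs that meet only at endpoints of neighboring level-$(n{+}1)$ edges, so that the piecewise affine maps patch consistently and $C_{n+1}$ extends $C_n$ on vertices; (ii) the matrix $A$ is irreducible with Perron eigenvalue exactly $\lambda$, identifying the degree-$4$ polynomial of the statement with the characteristic polynomial of the cycle structure of $G_{0,a,b}$; and (iii) the length vector $\mathbf{w}$ can be chosen strictly positive so that orientations are preserved and no edge degenerates, which is where the Perron-Frobenius theorem applied to $A$ enters. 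Once these are in place, the H\"older bound and the inclusion $V_n\subset V_{n+1}\subset C(\mathbb{Q}(\lambda)\cap[0,1])$ follow from the self-similar recursion on the weights.
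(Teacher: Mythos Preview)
Your outline follows essentially the same route as the paper: a Dumont--Thomas style parametrization driven by the graph $G_{0,a,b}$, with interval lengths prescribed by a positive Perron eigenvector of the incidence matrix, and H\"older continuity obtained by comparing the contraction rate $|\alpha|$ of $\hBF$ with the growth rate $\lambda$ of the walk counts. The polygonal approximations and their $\mathbb{Q}(\lambda)$-vertices also arise exactly as you describe.

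There are two points where your sketch needs tightening to match what the argument actually requires. First, one does \emph{not} work with all of $G_{0,a,b}$: the states $J,K,L$ (those with $\gamma=0$) must be deleted, since they correspond to inner intersections $\T(i)\cap\T(j)$ rather than boundary pieces, and they have no incoming edges from the essential part. The remaining graph $G$ is still not irreducible (states $A,B$, and in some cases $M,N$, lie outside the strongly connected component), so your point~(ii) as stated is false; what saves the construction is that every walk from these peripheral states enters the strongly connected core after at most two steps, which is enough to guarantee a strictly positive dominant eigenvector and to identify $\lambda$ as the largest root of the quartic.

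Second, and more substantially, your point~(i) is the real content of the proof and cannot be dispatched abstractly. The statement that consecutive sub-arcs ``meet only at endpoints'' translates into a finite but nontrivial list of identities of the form $\psi(P(S;\overline{\mathbf{o_{\max}}}))=\psi(P(S{+}1;\overline{\mathbf{1}}))$ and $\psi(P(S;\mathbf{o},\overline{\mathbf{o_{\max}}}))=\psi(P(S;\mathbf{o}{+}1,\overline{\mathbf{1}}))$, one for each state and each pair of adjacent outgoing edges in a \emph{specific} ordering of $G^+$ that must be exhibited. Several of these identities do not hold by mere equality of prefix sequences; they require producing explicit pairs of infinite walks in $G_0\subset\mathcal{G}_0$ witnessing, via Lemma~\ref{CharacBound}, that two distinct prefix sequences represent the same boundary point. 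Without this verification the limit map $C$ could have jump discontinuities on the countable set $A$ of ``dyadic'' parameters, and the H\"older estimate would not transfer from the affine maps $C_n$ to $C$. In the paper this is Proposition~\ref{prop:PsiIdentif}, carried out case by case in Appendix~\ref{app:Continuity}.
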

\begin{remark}
In the case $\alpha_2=\overline{\alpha_1}$, the H\"older exponent is
$$s=\frac{1}{\textrm{dim}_H\partial \mathcal{T}}.
$$
\end{remark}

The construction of the boundary parametrization $C$ in Theorem~\ref{ParamTheo} roughly reads as follows. The tile $\T=\T_{a,b}$ is the attractor of the graph directed construction~(\ref{TileGIFS}). The labels of the infinite walks in the associated prefix-suffix graph $\Gamma=\Gamma_{a,b}$ build up the language of the tile. The boundary $\partial \T$ happens to be also the attractor of a graph directed construction. A finite graph $G$ with a bigger number of states than $\Gamma$ describes the corresponding sublanguage of the language of $\T$.  This graph induces a Dumont-Thomas numeration system~\cite{DumontThomas89}, leading to the parametrization schematically represented below: 
$$\begin{array}{cccc}C:&[0,1]&\longrightarrow&\partial \mathcal{T}\\ &\searrow&&\nearrow\\ &&G&\end{array}$$ 
with $C(0)=C(1)$. To prove Theorem~\ref{DNDTheo}, we will investigate the injectivity of $C$ on $[0,1[$. Indeed, whenever $C$ is injective, $\partial \T$ is a simple closed curve and $\T$ is homeomorphic to a closed disk by a theorem of Sch\"onflies - a strengthened form of Jordan's curve theorem, see~\cite{Whyburn79}. 

\end{section}

\begin{section}{GIFS for the boundary of $\T_{a,b}$}\label{sec:GIFS}
In this section, we introduce two graphs that describe the boundary of the Rauzy fractals $\T=\T_{a,b}$ associated to the substitutions $\sigma=\sigma_{a,b}$. First, we will focus on the boundary graph $\mathcal{G}_{0,a,b}$, that describes the whole language of the boundary of $\T_{a,b}$. Second, we will present a subgraph $G_{0,a,b}\subset\mathcal{G}_{0,a,b}$, whose language is large enough to cover the boundary. The latter graph will be strongly connected (see Lemma~\ref{PosEVContact}), unlike the boundary graph, and this property will allow us to perform the boundary parametrization. Both graphs will be of importance to distinguish the disk-like tiles from the non-disk-like tiles. Roughly speaking, whenever the languages of these graphs are equal, the parametrization is injective and the boundary is a simple closed curve, otherwise the parametrization fails to be injective. For our class of substitutions, slight different versions of these graphs were computed in 2006 \cite{Thuswaldner06} and in 2013 \cite{LoridantMessaoudiSurerThuswaldner13} (see Remarks~\ref{rem:2typesedges} and~\ref{rem:G0vsCont}). A crucial result will be Lemma~\ref{CharacBound}, characterizing the boundary points of the tiles. Indeed, the ``if part'' will be used to prove the continuity of the parametrization $C$ in Theorem~\ref{ParamTheo} for all parameters $a,b$, the ``only if part'' to prove its injectivity whenever $2b-a\leq 3$.

By the tiling property~(\ref{AperTiling}), 
$$\partial \T=\bigcup_{i=1}^3\bigcup_{[\gamma,j]\in\Gamma_{srs},\gamma\ne 0}\T(i)\cap(\T(j)+\gamma).
$$
The subtiles $\T(i)$ satisfy the equations~(\ref{TileGIFS}). This allows to write the boundary $\partial \T$ itself as the attractor of a graph directed function system (\emph{GIFS}).

\subsection{The boundary graph: the boundary language}\label{sec:BoundaryLanguage}

\begin{definition}\label{DefBoundGraph} The \emph{boundary graph} $\mathcal{G}_0=\mathcal{G}_{0,a,b}$ is the largest graph satisfying the following conditions.
\begin{itemize}
\item[$(i)$] A triple $[i,\gamma,j]\in\mathcal{A}\times\pi(\mathbb{Z}^3)\times\mathcal{A}$ is a vertex of $\mathcal{G}_0$ if 
\begin{equation}\label{BoundBound}
||\gamma||\leq 2\frac{\max\{||\pi\lBF( p)||;p\textrm{ label of }\Gamma\}}{1-\max\{|\alpha_1|,|\alpha_2|\}}.
\end{equation}
\item[$(ii)$] There is an edge  $[i,\gamma,j]\xrightarrow{p|p'}[i_1,\gamma_1,j_1]$ iff $i\xrightarrow{p}i_1\in\Gamma$, $j\xrightarrow{p'}j_1\in\Gamma$ and 
$$\mathbf{h}\gamma_1=\gamma+\pi (\lBF(p')-\lBF( p)).
$$
\item[$(iii)$] Each vertex belongs to an infinite walk starting from a vertex $[i,\gamma,j]$ with $[\gamma,j]\in\Gamma_{srs}$ and ($\gamma\ne 0$ or $i<j$).
\end{itemize}
The set of vertices of $\mathcal{G}_0$ is denoted by $\mathcal{S}_0$.
\end{definition}
An analogous definition can be found in~\cite[Definition 5.4]{SiegelThuswaldner10}. Note that~(\ref{BoundBound}) is an upper bound for the diameter of $\T$. 

For a given substitution, the computation of $\mathcal{G}_0$ is algorithmic. There are finitely many triples satisfying~(\ref{BoundBound}). $\mathcal{G}_0$  is obtained after checking the algebraic relation of $(ii)$ between all pairs of triples and erasing the vertices that do not fulfill $(iii)$. See also~\cite{SiegelThuswaldner10}. 
\begin{ex}$\mathcal{G}_0$ is depicted on Figure~\ref{fig:BoundGraphTribo} for $a=b=1$. 
See Table~\ref{SRBG1} for the vertices associated to the letters in this graph. Here, if $S=[i,\gamma,j]$, then $S^-:=[j,-\gamma,i]$. The colored states stand for triples $[i,\gamma,j]$ with $[\gamma,j]\in\Gamma_{srs}$. The labels just indicate the number of $1$'s in $p_1,p_2$ ($0$ for the prefix $\epsilon$).
\end{ex}

\begin{figure}
\begin{center}
\includegraphics[width=150mm,height=141mm]{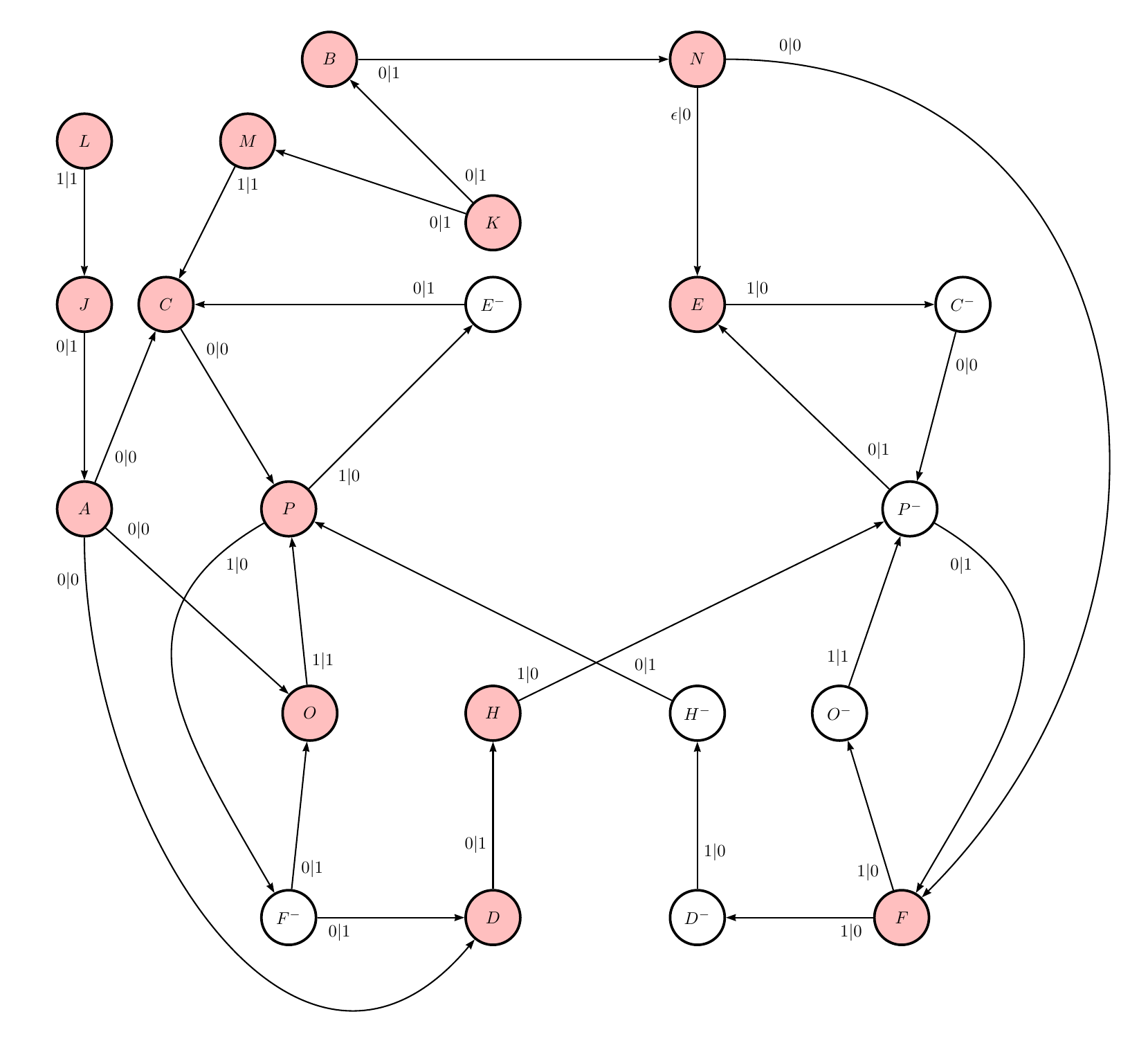}\end{center}
\caption{Boundary graph of the Tribonacci substitution ($a=b=1$)}\label{fig:BoundGraphTribo}
\end{figure}

Boundary points are characterized as follows.
\begin{lemma}\label{CharacBound} Let $(p_k)_{k\geq 0}$ and $(p_k')_{k\geq0}$ be the labels of infinite walks in the prefix-suffix graph $\Gamma$ starting from $i\in\mathcal{A}$ and $j\in\mathcal{A}$ respectively. Let $\gamma\in\pi(\mathbb{Z}^3)$ such that $[\gamma,j]\in\Gamma_{srs}$ and $(\gamma\ne 0 \textrm{ or }i<j)$. Then 
$$\sum_{k\geq 0} \mathbf{h}^k\pi \lBF(p_k)=\gamma+\sum_{k\geq 0} \mathbf{h}^k\pi \lBF(p_k')=:x$$ if and only if there is an infinite walk
$$[i,\gamma,j]\xrightarrow{p_0|p_0'}[i_1,\gamma_1,j_1]\xrightarrow{p_1|p_1'}\ldots\in\mathcal{G}_0.
$$
In this case, $x\in \mathcal{T}(i)\cap (\mathcal{T}(j)+\gamma)$.
\end{lemma}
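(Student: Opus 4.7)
The plan is to prove both directions simultaneously by exploiting the telescoping identity produced by iterating the edge relation
\begin{equation*}
\mathbf{h}\gamma_{n+1} = \gamma_n + \pi\bigl(\lBF(p_n') - \lBF(p_n)\bigr)
\end{equation*}
that defines condition $(ii)$ of Definition~\ref{DefBoundGraph}. Iterating this identity $n$ times gives
\begin{equation*}
\gamma = \sum_{k=0}^{n-1}\mathbf{h}^k\pi\bigl(\lBF(p_k) - \lBF(p_k')\bigr) + \mathbf{h}^n\gamma_n,
\end{equation*}
and the two implications are essentially two different readings of this single telescoping formula.

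For the $(\Leftarrow)$ direction I would assume the walk in $\mathcal{G}_0$ exists. Then $||\gamma_n||$ is uniformly bounded by condition $(i)$, and since $\mathbf{h}$ contracts the norm $||\cdot||$ by a factor at most $|\alpha|:=\max\{|\alpha_1|,|\alpha_2|\}<1$, the remainder $\mathbf{h}^n\gamma_n$ vanishes as $n\to\infty$. Passing to the limit in the telescoping formula yields $\gamma = \sum_{k\geq 0}\mathbf{h}^k\pi(\lBF(p_k) - \lBF(p_k'))$, which rearranges to the claimed equality between the two series. The conclusion $x \in \T(i)\cap(\T(j)+\gamma)$ is then a direct consequence of the representation~(\ref{deftiles}) of the subtiles applied to both walks.

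For the $(\Rightarrow)$ direction I would construct the candidate walk explicitly: the letters $i_n, j_n$ are read from the two given prefix-suffix walks in $\Gamma$, and I would set $\gamma_0:=\gamma$ and $\gamma_{n+1}:=\mathbf{h}^{-1}\bigl(\gamma_n + \pi(\lBF(p_n')-\lBF(p_n))\bigr)$, so that $(ii)$ is satisfied by design. Rearranging the hypothesis yields the closed form
\begin{equation*}
\gamma_n = \sum_{k\geq 0}\mathbf{h}^k\pi\bigl(\lBF(p_{k+n}) - \lBF(p_{k+n}')\bigr),
\end{equation*}
so a geometric series estimate on $\mathbb{H}_c$ with ratio $|\alpha|$ produces exactly the diameter bound of $(i)$. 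Condition $(iii)$ is automatic for every intermediate triple $[i_n,\gamma_n,j_n]$, since the walk we have constructed starts at the seed $[i,\gamma,j]$, whose admissibility is part of the hypothesis.

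The only delicate point -- more a verification than a real obstacle -- is confirming that each $\gamma_n$ lies in the lattice $\pi(\mathbb{Z}^3)$, which is needed in order for $[i_n,\gamma_n,j_n]$ to be a legal vertex of $\mathcal{G}_0$. This follows by induction from the recursion together with the identity $\mathbf{h}\,\pi(\mathbb{Z}^3)=\pi(\mathbb{Z}^3)$, itself a consequence of the unimodularity of $\sigma$ via the commutation $\mathbf{h}\circ\pi = \pi\circ\M$ and the fact that $\M$ preserves $\mathbb{Z}^3$ bijectively.
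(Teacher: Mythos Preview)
Your proposal is correct and follows essentially the same approach as the paper's own proof: both directions are handled via the telescoping identity derived from condition $(ii)$, with the contraction of $\mathbf{h}$ killing the remainder term in $(\Leftarrow)$, and the explicit tail-series formula for $\gamma_n$ providing both the diameter bound $(i)$ and the edge relation $(ii)$ in $(\Rightarrow)$. Your treatment of the lattice condition $\gamma_n\in\pi(\mathbb{Z}^3)$ via the commutation $\mathbf{h}\circ\pi=\pi\circ\M$ and unimodularity is exactly what the paper does when it lifts to $x_1=\M^{-1}(x+\lBF(p_0')-\lBF(p_0))\in\mathbb{Z}^3$.
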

\begin{proof} We mainly use arguments of~\cite[Proof of Theorem 5.6]{SiegelThuswaldner10}. If the above infinite walk exists in $\mathcal{G}_0$, then using the definition of the edges one can write for all $n\geq 0$:
$$\hBF^{n+1}\gamma_{n+1}+\sum_{k=0}^n\pi\lBF(p_k)=\gamma+\sum_{k=0}^n\pi\lBF(p_k').
$$  
As $\hBF$ is contracting and $(\gamma_{n})_{n\geq 0}$ is a bounded sequence, letting $n\to\infty$ gives the required equality.

We now construct the walk by assuming the equality of the two infinite expansions. Note that $\gamma$ satisfies~(\ref{BoundBound}), and by assumption there exist edges $i\xrightarrow{p_0}i_1$ and $j\xrightarrow{p_0'}j_1$ in $\Gamma$. Let 
$$\gamma_1=\sum_{k=0}^\infty\pi\lBF(p_{k+1})-\sum_{k=0}^\infty\pi\lBF(p_{k+1}')=\hBF^{-1}(\gamma+\pi\lBF(p_{0}')-\pi\lBF(p_{0})).
$$
Then again $\gamma_1$ satisfies~(\ref{BoundBound}) and $\hBF\gamma_1=\gamma+\pi(\lBF(p_{0}')-\lBF(p_{0}))$. Moreover, choosing $x\in\mathbb{Z}^3$ satisfying $\pi(x)=\gamma$, we can define 
$$x_1=\M^{-1}(x+\lBF(p_{0}')-\lBF(p_{0}))\in\mathbb{Z}^3,
$$
that is, $\gamma_1\in\pi(\mathbb{Z}^3)$. Therefore, the edge $[i,\gamma,j]\xrightarrow{p_0|p_0'}[i_1,\gamma_1,j_1]$ fulfills $(ii)$ of Definition~\ref{DefBoundGraph}. The infinite sequence of edges $[i,\gamma,j]\xrightarrow{p_0|p_0'}[i_1,\gamma_1,j_1]\xrightarrow{p_1|p_1'}\ldots$ satisfying $(i)$ and $(ii)$ of Definition~\ref{DefBoundGraph} is constructed iteratively in the above way. It satisfies also $(iii)$, since $[\gamma,j]\in \Gamma_{srs}$ and $(\gamma\ne 0 \textrm{ or }i<j)$. Therefore, it is an infinite walk in $\mathcal{G}_0$.
\end{proof}

\begin{lemma}Let $[i,\gamma,j]\in\mathcal{S}_0$. Then 
either $[\gamma,j]$ or $[-\gamma,i]$ belongs to $ \Gamma_{srs} $.
\end{lemma}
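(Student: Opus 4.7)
The plan is to rephrase both membership conditions in a single analytic form and then prove the resulting bound by induction along the walks promised by condition $(iii)$ of Definition~\ref{DefBoundGraph}.

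First, I would observe that $\pi$ is injective on $\mathbb{Z}^3$: since $\mathbf{u}_\beta$ is proportional to $(\beta^2,\beta,1)^T$ and $\beta$ is irrational, $\mathbb{H}_e\cap\mathbb{Z}^3=\{0\}$. Hence every $\gamma\in\pi(\mathbb{Z}^3)$ admits a unique integer lift $\mathbf{x}(\gamma)\in\mathbb{Z}^3$. Setting $v_k:=\langle\mathbf{e}_k,\mathbf{v}_\beta\rangle$ (positive for every $k\in\A$) and $t(\gamma):=\langle\mathbf{x}(\gamma),\mathbf{v}_\beta\rangle$, the definition of $\Gamma_{srs}$ reads
\[ [\gamma,j]\in\Gamma_{srs} \iff 0\le t(\gamma)<v_j, \qquad [-\gamma,i]\in\Gamma_{srs} \iff -v_i<t(\gamma)\le 0. \]
So the lemma reduces to showing that $-v_i<t(\gamma)<v_j$ for every $[i,\gamma,j]\in\mathcal{S}_0$.

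I would establish this by induction on the length of a walk in $\mathcal{G}_0$ leading from a base vertex $[i_0,\gamma_0,j_0]$ with $[\gamma_0,j_0]\in\Gamma_{srs}$ (granted by $(iii)$) to the given vertex $[i,\gamma,j]$. The base case is immediate from $0\le t(\gamma_0)<v_{j_0}$ and $v_{i_0}>0$. For the inductive step, consider an edge $[i',\gamma',j']\xrightarrow{p|p'}[i,\gamma,j]$ with $\sigma(i)=pi's$ and $\sigma(j)=p'j't'$. Injectivity of $\pi$ lifts the defining relation $\hBF\gamma=\gamma'+\pi(\lBF(p')-\lBF(p))$ to the integer identity $\M\mathbf{x}(\gamma)=\mathbf{x}(\gamma')+\lBF(p')-\lBF(p)$. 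Pairing with $\mathbf{v}_\beta$, using $\M^T\mathbf{v}_\beta=\beta\mathbf{v}_\beta$ together with the length identities $|p|_\beta+v_{i'}+|s|_\beta=\beta v_i$ and $|p'|_\beta+v_{j'}+|t'|_\beta=\beta v_j$, I obtain after rearrangement
\[ \beta(v_j-t(\gamma)) = (v_{j'}-t(\gamma')) + |t'|_\beta + |p|_\beta, \quad \beta(t(\gamma)+v_i) = (t(\gamma')+v_{i'}) + |p'|_\beta + |s|_\beta. \]
By the inductive hypothesis the leading summand on each right-hand side is strictly positive, while the remaining summands are nonnegative $\beta$-lengths of words; hence $t(\gamma)<v_j$ and $t(\gamma)>-v_i$, completing the induction.

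The only real obstacle is the bookkeeping: correctly lifting the relation from $\mathbb{H}_c$ to $\mathbb{Z}^3$ and matching the prefix/suffix decompositions $\sigma(i)=pi's$ with the corresponding $\beta$-length identity. Once this setup is in place, the inductive step is essentially a one-line verification, and the asymmetry between $i$ and $j$ in the conclusion comes out naturally from the asymmetry between $p$ and $p'$ in the edge relation.
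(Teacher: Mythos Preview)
Your proof is correct and follows essentially the same route as the paper's: induct along the walk from a base vertex guaranteed by condition~$(iii)$, lift the edge relation to $\mathbb{Z}^3$, pair with $\mathbf{v}_\beta$, and use the prefix--suffix decomposition $\sigma(i)=pi's$ to control the resulting linear combination. Your packaging is slightly cleaner---you merge the two cases $[\gamma,j]\in\Gamma_{srs}$ and $[-\gamma,i]\in\Gamma_{srs}$ into the single two-sided bound $-v_i<t(\gamma)<v_j$ and make the injectivity of $\pi|_{\mathbb{Z}^3}$ explicit---but the argument is the same.
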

\begin{proof} Note that $[0,i]\in\Gamma_{srs}$ for all $i\in\mathcal{A}$. By definition, a vertex of $\mathcal{G}_0$ belongs to an infinite walk starting from a vertex $[i,\gamma,j]$ with $[\gamma,j]\in\Gamma_{srs}$.  Thus we assume that a given vertex $[i,\gamma,j]$ of $\mathcal{G}_0$ satisfies $[\gamma,j]\in\Gamma_{srs}$ or $[-\gamma,i]\in\Gamma_{srs}$,  and check that as soon as there is an edge $[i,\gamma,j]\xrightarrow{p|p'}[i_1,\gamma_1,j_1] $ in $\mathcal{G}_0$, then either $[\gamma_1,j_1]$ or $[-\gamma_1,i_1]$ belongs to $\Gamma_{srs}$. Indeed, let $x\in\mathbb{Z}^3$ such that $\pi(x)=\gamma$. Then the existence of such an edge insures that 
$$\gamma_1=\pi(x_1)=\pi(\M^{-1}(x+\lBF(p')-\lBF( p)))
$$
for some $x_1\in\mathbb{Z}^3$. Therefore,
$$\langle x_1,\mathbf{v}_{\beta}\rangle=\langle\M^{-1}(x+\lBF(p')-\lBF( p)),\mathbf{v}_{\beta}\rangle=\frac{1}{\beta}\langle x+\lBF(p')-\lBF( p),\mathbf{v}_{\beta}\rangle.
$$
If $[\gamma,j]\in\Gamma_{srs}$, then 
$0\leq\langle\mathbf{x},\mathbf{v}_{\beta}\rangle< \langle\mathbf{e}_j,\mathbf{v}_{\beta}\rangle$ implies that
$$-\beta^{-1}\langle \lBF( p),\mathbf{v}_{\beta}\rangle
\leq\langle x_1,\mathbf{v}_{\beta}\rangle
< \beta^{-1}\langle \mathbf{e}_j+\lBF(p'),\mathbf{v}_{\beta}\rangle.
$$
Using the fact that $\sigma(i_1)=pis$ and $\sigma(j_1)=p'js'$ for some $s,s'\in\mathcal{A}^*$, we obtain
$$-\langle \mathbf{e}_{i_1},\mathbf{v}_{\beta}\rangle
<\langle x_1,\mathbf{v}_{\beta}\rangle
<\langle \mathbf{e}_{j_1},\mathbf{v}_{\beta}\rangle,
$$
hence $[\gamma_1,j_1]$ or $[-\gamma_1,i_1]$ belongs to $\Gamma_{srs}$. A similar computation holds if $[-\gamma,i]\in\Gamma_{srs}$. See also~\cite[Proof of Theorem 5.6]{SiegelThuswaldner10}.
\end{proof}

\begin{remark}\label{rem:2typesedges} In~\cite{SiegelThuswaldner10,Thuswaldner06}, all the vertices $[i,\gamma,j]$ of the boundary graph satisfy $[\gamma,j]\in\Gamma_{srs}$, but two types of edges are used. In the present article, we do not introduce two types of edges. In this way, the labels of infinite walks in $\mathcal{G}_0$ are sequences of prefixes that also occur as labels of infinite walks in the prefix-suffix graph. In other words, the language of the boundary of $\T$ is directly visualized as a sublanguage of $\T$. This will be important for the proof of our main results, that requires to find out the infinite sequences of prefixes $(p_k)_{k\geq 0},(p_k')_{k\geq 0}$ satisfying $\sum_{k\geq 0} \mathbf{h}^k\pi \lBF(p_k)=\sum_{k\geq 0} \mathbf{h}^k\pi \lBF(p_k')$. We explain in the core of the proof of Proposition~\ref{prop:ContactGIFS} how to get rid off the two types of edges from the boundary graphs of~\cite{SiegelThuswaldner10,Thuswaldner06} in order to derive our boundary graph $\mathcal{G}_0$.
 \end{remark}

We call
$$\mathcal{S}=\{[i,\gamma,j]\in\mathcal{S}_0;\gamma\ne 0,[\gamma,j]\in\Gamma_{srs}\}$$
 the \emph{set of neighbors} of $\T$ in the tiling (\ref{AperTiling}).

This gives us the first boundary GIFS. 
\begin{proposition}\label{GSGIFS} Let $B[i,\gamma,j]$ the non-empty compact sets solutions of the GIFS 
\begin{equation}\label{NeighborGIFS}
\forall [i,\gamma,j]\in\mathcal{S}_0, \;B[i,\gamma,j]=\bigcup_{[i,\gamma,j]\xrightarrow{p|p'}[i_1,\gamma_1,j_1]\in\mathcal{G}_0}\hBF B[i_1,\gamma_1,j_1]+\pi\lBF( p).
\end{equation}
Then $B[i,\gamma,j]=\T(i)\cap(\T(j)+\gamma)$ and 
$\partial \T=\bigcup_{[i,\gamma,j]\in\mathcal{S}}B[i,\gamma,j].
$
\end{proposition}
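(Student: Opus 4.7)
The plan is to show that $\widetilde B[i,\gamma,j]:=\T(i)\cap(\T(j)+\gamma)$ is the unique non-empty compact family solving the GIFS~(\ref{NeighborGIFS}), hence coincides with $B[i,\gamma,j]$, and then to read off the boundary description from the aperiodic tiling~(\ref{AperTiling}). Existence and uniqueness of the non-empty compact attractor of~(\ref{NeighborGIFS}) follow from the Mauldin--Williams theorem~\cite{MauldinWilliams88}: the maps $x\mapsto\hBF x+\pi\lBF(p)$ are contractions because $\hBF$ is one, and condition~$(iii)$ of Definition~\ref{DefBoundGraph} guarantees that every vertex of $\mathcal{S}_0$ has at least one outgoing edge, so none of the attractors can be empty.

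It thus remains to verify that the family $\widetilde B$ satisfies~(\ref{NeighborGIFS}). For the inclusion ``$\supseteq$'', fix an edge $[i,\gamma,j]\xrightarrow{p|p'}[i_1,\gamma_1,j_1]$ in $\mathcal{G}_0$ and $y=z'+\gamma_1\in\widetilde B[i_1,\gamma_1,j_1]$ with $z'\in\T(j_1)$. By~(\ref{TileGIFS}), $\hBF y+\pi\lBF(p)\in\hBF\T(i_1)+\pi\lBF(p)\subseteq\T(i)$, while the edge relation $\hBF\gamma_1+\pi\lBF(p)=\gamma+\pi\lBF(p')$ rewrites the same point as $\hBF z'+\pi\lBF(p')+\gamma\in\T(j)+\gamma$. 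For ``$\subseteq$'', let $x\in\widetilde B[i,\gamma,j]$: by two applications of~(\ref{TileGIFS}), one writes $x=\hBF z+\pi\lBF(p)$ with $z\in\T(i_1)$ and $x-\gamma=\hBF z'+\pi\lBF(p')$ with $z'\in\T(j_1)$ for some edges $i\xrightarrow{p}i_1$ and $j\xrightarrow{p'}j_1$ of $\Gamma$. Setting $\gamma_1:=z-z'$ yields $\hBF\gamma_1=\gamma+\pi\lBF(p')-\pi\lBF(p)$, which is condition~$(ii)$; unimodularity of $\M$ forces $\gamma_1\in\pi(\mathbb{Z}^3)$, and $\|\gamma_1\|\leq 2\,\mathrm{diam}(\T)$ gives~(\ref{BoundBound}). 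Iterating the decomposition on $z\in\widetilde B[i_1,\gamma_1,j_1]$ produces an infinite walk $[i,\gamma,j]\to[i_1,\gamma_1,j_1]\to\cdots$ in $\mathcal{G}_0$; prepending it with a walk from an admissible starting vertex to $[i,\gamma,j]$ (whose existence is~$(iii)$ for $[i,\gamma,j]\in\mathcal{S}_0$) verifies~$(iii)$ at every $[i_k,\gamma_k,j_k]$. Hence $[i_1,\gamma_1,j_1]\in\mathcal{S}_0$, $z\in\widetilde B[i_1,\gamma_1,j_1]$, and $x$ lies in the right-hand side of~(\ref{NeighborGIFS}).

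Once $B[i,\gamma,j]=\T(i)\cap(\T(j)+\gamma)$ is established, the boundary formula follows from the consequence $\partial\T=\bigcup_{i}\bigcup_{[\gamma,j]\in\Gamma_{srs},\gamma\neq 0}\T(i)\cap(\T(j)+\gamma)$ of the tiling~(\ref{AperTiling}): each triple $[i,\gamma,j]$ appearing there with non-empty intersection satisfies $(i)$--$(iii)$ of Definition~\ref{DefBoundGraph} by the same iterative construction, hence belongs to $\mathcal{S}$. The main obstacle is the ``$\subseteq$'' step above, in which one must simultaneously verify the algebraic condition $\gamma_1\in\pi(\mathbb{Z}^3)$ via unimodularity, the norm bound~(\ref{BoundBound}), and, crucially, condition~$(iii)$---whose preservation under subdivision forces an iterative rather than a one-step argument, and is the reason the graph-directed structure of the boundary closes up exactly at the vertex set $\mathcal{S}_0$.
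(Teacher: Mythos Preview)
Your proof is correct and follows essentially the same strategy as the paper's: verify that the family $\bigl(\T(i)\cap(\T(j)+\gamma)\bigr)_{[i,\gamma,j]\in\mathcal{S}_0}$ satisfies the GIFS equations~(\ref{NeighborGIFS}) and invoke Mauldin--Williams uniqueness, then read off the boundary formula from the tiling~(\ref{AperTiling}). One small point you gloss over (but which the paper makes explicit) is that uniqueness applies only among \emph{non-empty} compact families, so you must also observe that $\T(i)\cap(\T(j)+\gamma)\neq\emptyset$ for every vertex of $\mathcal{S}_0$; this follows immediately from condition~$(iii)$ together with the ``if'' direction of Lemma~\ref{CharacBound}.
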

\begin{proof} The proof follows~\cite[Proof of Theorem 5.7]{SiegelThuswaldner10}. The set 
$$\left\{x\mapsto \displaystyle\hBF x+\pi\lBF( p)\right\}_{[i,\gamma,j]\xrightarrow{p|p'}[i_1,\gamma_,j_1]\in\mathcal{G}_0}$$ 
is a graph iterated function system, since $\hBF$ is a contraction. By a result of Mauldin and Williams~\cite{MauldinWilliams88}, there is a unique sequence of non-empty compact sets $\left(B[i,\gamma,j]\right)_{[i,\gamma,j]\in\mathcal{S}_0}$ which is the attractor of this GIFS. 

We now show that the sequence of sets $\left(\T(i)\cap(\T(j)+\gamma)\right)_{[i,\gamma,j]\in\mathcal{S}_0}$ also satisfies the set equations of the above GIFS and then use the unicity of the attractor.

Let $[i,\gamma,j]$ be a vertex of $\mathcal{G}_0$. Using (\ref{TileGIFS}), we can subdivide each intersection of tiles as follows:
\begin{equation}\label{SubdBoundPart}
\begin{array}{c}\T(i)\cap (\T(j)+\gamma)\\\\
=\bigcup_{i\xrightarrow{p}i_1\in\Gamma,j\xrightarrow{p'}j_1\in\Gamma}\pi\lBF( p)+\hBF\left[\T(i_1)\cap\left(\T(j_1)+\underbrace{\hBF^{-1}\pi(\lBF(p')-\lBF( p)+\hBF^{-1}\gamma }_{\displaystyle=:\gamma_1}\right)\right].
\end{array}
\end{equation}
Let $[i_1,\gamma_1,j_1]$ be as in the above union. If it is a vertex of $\mathcal{G}_0$, then by a similar computation as in the first part of the proof of Lemma~\ref{CharacBound}, one obtains a point in $\T(i_1)\cap(\T(j_1)+\gamma_1)$, thus this intersection is non-empty.

On the contrary, suppose $\T(i_1)\cap(\T(j_1)+\gamma_1)\ne\emptyset$. We wish to show that  $[i,\gamma,j]\xrightarrow{p|p'}[i_1,\gamma_1,j_1]\in\mathcal{G}_0$. First, since $[i,\gamma,j]$ is a vertex of $\mathcal{G}_0$, we can write $\gamma=\pi(x)$ for some $x\in\mathbb{Z}^3$ and $\gamma_1=\pi(\M^{-1}(x+\lBF(p')-\lBF( p)))\in\pi(\mathbb{Z}^3)$. Also, since $\T(i_1)\cap(\T(j_1)+\gamma_1)\ne\emptyset$, there are $(p_k)_{k\geq 0}$ and $(p_k')_{k\geq0}$ labels of infinite walks of $\Gamma$ starting from $i_1$ and $j_1$ respectively such that  
$$\sum_{k\geq 0} \mathbf{h}^k\pi \lBF(p_k)=\gamma+\sum_{k\geq 0} \mathbf{h}^k\pi \lBF(p_k').
$$
Consequently, $\gamma_1$ is bounded as in~(\ref{BoundBound}).  Hence the edge $[i,\gamma,j]\xrightarrow{p|p'}[i_1,\gamma_1,j_1]$ satisfies $(i)$, as well as $(ii)$ of Definition~\ref{DefBoundGraph}. Moreover, from the above equality of expansions, one can construct as in the proof of Lemma~\ref{CharacBound} an infinite sequence of edges
starting from $[i_1,\gamma_1,j_1]$ and satisfying $(i)$ and $(ii)$ of Definition~\ref{DefBoundGraph}. Lastly, by assumption on $[i,\gamma,j]$, one can find a walk $[i_0,\gamma_0,j_0]\xrightarrow{q_0|q_0'}\cdots\xrightarrow{q_{l}|q_{l}'}[i,\gamma,j]$ in $\mathcal{G}_0$ with $[\gamma_0,j_0]\in\Gamma_{srs}$ and $(\gamma_0\ne0\textrm{ or }i<j)$. Altogether,  we have found an infinite sequence of edges satisfying $(i)$ and $(ii)$  and including the edge $[i,\gamma,j]\xrightarrow{p|p'}[i_1,\gamma_1,j_1]$. Therefore, $[i_1,\gamma_1,j_1]$ fulfills $(iii)$ of Definition~\ref{DefBoundGraph} and $[i,\gamma,j]\xrightarrow{p|p'}[i_1,\gamma_1,j_1]$ belongs to $\mathcal{G}_0$.

It follows that (\ref{SubdBoundPart}) can be re-written as
\begin{equation}\label{SubdBoundPart2}\begin{array}{c}\T(i)\cap (\T(j)+\gamma)\\\\
=\bigcup_{[i,\gamma,j]\xrightarrow{p|p'}[i_1,\gamma_1,j_1]\in\mathcal{G}_0}\pi\lBF( p)+\hBF\left[\T(i_1)\cap\left(\T(j_1)+\underbrace{\hBF^{-1}\pi(\lBF(p')-\lBF( p)+\hBF^{-1}\gamma }_{=:\gamma_1}\right)\right].
\end{array}
\end{equation}
By unicity of the GIFS-attractor, we conclude that $B[i,\gamma,j]=\T(i)\cap(\T(j)+\gamma)$ for all $[i,\gamma,j]\in\mathcal{S}_0$. 

The second equality is a consequence of the tiling property and the definition of $\mathcal{S}$:
$$\partial \T=\bigcup_{i=1}^3\bigcup_{[\gamma,j]\in\Gamma_{srs},\gamma\ne 0}\T(i)\cap(\T(j)+\gamma)=\bigcup_{[i,\gamma,j]\in\mathcal{S}}\T(i)\cap(\T(j)+\gamma).
$$
\end{proof}

Therefore, $\partial T$ is the attractor of a graph directed self-affine system. To proceed to the boundary parametrization, the natural idea would be to order the vertices and edges of the graph and use the induced Dumont-Thomas numeration system~\cite{DumontThomas89}. Geometrically, this corresponds to an ordering of the boundary parts and their subdivisions clockwise or counterclockwise along the boundary. This method requires the strongly connectedness of the graph, or at least the existence of a positive dominant eigenvector for its incidence matrix. However, in general, the above boundary graph does not have this property. Roughly speaking, there may be many redundances in the boundary language given by the boundary graph: the mapping
$$[i,\gamma,j]\xrightarrow{p_0|p_0'}[i_1,\gamma_1,j_1]\xrightarrow{p_1|p_1'}\ldots\in\mathcal{G}_0\;\;\mapsto\;\;\sum_{k\geq 0} \mathbf{h}^k\pi {\bf l}(p_k)\in\partial\T
$$
sending an infinite walk in the boundary graph to a boundary point may be highly not injective. The level of non-injectivity reflects the complexity of the topology of $\T$. For example, many neighbors (that is, many states in the automaton) suggest an intricate topological structure. 

In fact, if an intersection $\T(i)\cap(\T(j)+\gamma)$ is a point, or has a Hausdorff dimension smaller than that of the boundary,  it shall be redundant (contained in other intersections), thus not essential. In the next subsection, we introduce a subgraph of the boundary graph that will be more appropriate.

\subsection{The graph $G_0$}
In 2006, J\"org Thuswaldner defined a graph which is in general smaller than the boundary graph but always contains enough information to describe the whole boundary~\cite{Thuswaldner06}. As an example, he computed this graph for our class of substitutions.

\begin{definition}\label{def:G0} Let $a\geq b\geq 1$. Let $G_0=G_{0,a,b}$ be the graph with 
\begin{itemize}
\item Vertices: 
$$\begin{array}{c}
R_0=R_{0,a,b}\\\\=\{A,B,C,C^-,D,D^-,E,E^-,F,F^-,G,G^-,H,H^-,I,I^-,J,K,N,N^-,O,O^-,P,P^-\}\\\\
\cup\;\{M\}\setminus\{I,I^-\},\textrm{ if }a\geq 2,b=1\\\\
\cup\;\{L\}\setminus\{G,G^-,N^-\},\textrm{ if }a= b\geq 2\\\\
\cup\;\{L,M\}\setminus\{G,G^-,I,I^-,N^-\},\textrm{ if }a=b=1
\end{array}
$$
 as in Table~\ref{SRBG1}. Here, if $S=[i,\gamma,j]$, then $S^-:=[j,-\gamma,i]$.
\item Edges\footnote{\label{note1}As the prefixes $p_1,p_2$ belong to $\{\epsilon, 1,11,\ldots,\underbrace{11\cdots1}_{a}\}$, the labels just indicate the number of $1$'s in $p_1,p_2$.}: in addition to the edges of Table~\ref{SRBG1}, we have
$$S^-\xrightarrow{p_2|p_1}T^-\in G_0 \iff S\xrightarrow{p_1|p_2}T\in G_0,
$$ 
and 
$$S^-\xrightarrow{p_2|p_1}T\in G_0 \iff S\xrightarrow{p_1|p_2}T^-\in G_0
$$ 
(as long as $S^-,T^-$ belong to $R_0$ defined above).
\end{itemize}
\end{definition}

\begin{table}[ht]\scriptsize\begin{center}
\begin{tabular}{|c|c|c|c|c|c|c|}
\hline
\multicolumn{3}{|c}{Vertex} & \multicolumn{3}{|c|}{Edge(s)} \\
\hline
 \# & Name & Condition& to & Label $p_1|p_2$& Condition\\
\hline
\multirow{4}{*}{$A$} & \multirow{4}{*}{$[1,\pi(0,0,1),1]$} & & $C$ & $k|b-1+k,\;0 \leq k \leq a-b$  &  \\
& & & $D$ & $0|b-1$  & \\
 & & & $O$ & $0|b-1$  & \\
 & & & $N$ & $k|b+k,\; 0 \leq k \leq a-b-1$  & $a \not=b$ \\
\hline
\multirow{2}{*}{$B$} & \multirow{2}{*}{$[1,\pi(0,0,1),2]$} & & $N$ & $a-b|a$  & \\
 & & & $C$ & $a-b+1|a$  & $b \geq 2$ \\
\hline
\multirow{3}{*}{$C$} & \multirow{3}{*}{$[1,\pi(0,1,-1),1]$} & & $P$ & $ k|a-b+k,\;0 \leq k \leq b-1$  &  \\
 & & & $H$ & $k|a-b+1+k,\;0 \leq k \leq b-2$ & $b \geq 2$ \\
 & & & $I$ & $k|a-b+1+k,\; 0 \leq k \leq b-2$  & $b \geq 2$ \\
\hline
\multirow{2}{*}{$D$} & \multirow{2}{*}{$[1,\pi(0,1,-1),2]$} & & $H$ & $b-1|a$  & \\
 & & & $I$ & $b-1|a$  & $b \geq 2$\\
\hline
\multirow{2}{*}{$E$} & \multirow{2}{*}{$[2,\pi(1,0,-1),1]$} & & $C^-$ & $a|a-b$  & \\
 & & & $N^-$ & $a|a-b-1$ & $a \not= b$ \\
\hline
 \multirow{2}{*}{$F$} & \multirow{2}{*}{$[3,\pi(1,0,-1),1]$} & & $D^-$ & $b|0$  & \\
 & & & $O^-$ & $b|0$  & \\
\hline
\multirow{2}{*}{$G$} & \multirow{2}{*}{$[1,\pi(1,0,-1),1]$} & $a \not= b$ & $C^-$ & $a-1-k|a-b-1-k,\;  0\leq k  \leq a-b-1$  & $a \not=b$ \\
 & &  & $N^-$ & $a-1-k|a-b-2-k,\;  0\leq k  \leq a-b-2$  & $a \geq b+2$ \\
\hline
\multirow{3}{*}{$H$} & \multirow{3}{*}{$[2,\pi(1,-1,1),1]$} & & $P^-$ & $a|b-1$  & \\
& & & $H^-$ & $a|b-2$  & $b \geq 2$ \\
& & & $I^-$ & $a|b-2$ & $b \geq 2$ \\
\hline
\multirow{3}{*}{$I$} & \multirow{3}{*}{$[1,\pi(1,-1,1),1]$} & \multirow{3}{*}{$b \geq 2$} & $P^-$ & $a-1-k|b-2-k,\;0 \leq k\leq b-2$  & $b \geq 2$ \\
 & & & $H^-$ & $a-1-k|b-3-k,\;0 \leq k\leq b-3$ & $b \geq 3$ \\
 & & & $I^-$ & $a-1-k|b-3-k,\;0 \leq k\leq b-3$  & $b \geq 3$ \\
\hline
$J$ & $[1,\pi(0,0,0),2]$ & & $A$ & $a-1|a$  & \\
\hline
\multirow{3}{*}{$K$} & \multirow{3}{*}{$[1,\pi(0,0,0),3]$} & & $B$ & $b-1|b$  & \\
 & & & $J$ & $b|b$ & $a \not= b$\\
 & & & $M$ & $b-1|b$& $b=1$\\
\hline
$L$ & $[2,\pi(0,0,0),3]$ & $a=b$ & $J$ & $a|a$  & $a=b$ \\
\hline
$M$ & $[2,\pi(0,0,1),2]$ & $b=1$ & $C$ & $a|a$  & $b=1$ \\
\hline
\multirow{3}{*}{$N$} & \multirow{3}{*}{$[1,\pi(0,1,0),1]$} & & $E$ & $0|a-1$  & \\
 & & & $F$ & $0|a-1$  & \\
 & & & $G$ & $0|a-1$  & $a \not=b$ \\
\hline
$O$ & $[3,\pi(0,1,-1),2]$ & & $P$ & $b|a$ &  \\\hline
\multirow{3}{*}{$P$} & \multirow{3}{*}{$[2,\pi(1,-1,0),1]$} & & $E^-$ & $a|0$  & \\
 & & & $F^-$ & $a|0$  & \\
 & & & $G^-$ & $a|0$  & $a \not=b$  \\
\hline 
\end{tabular}
\caption{The subgraph $G_0$ of the self-replicating boundary graph.}\label{SRBG1}
\end{center}
\end{table}

\begin{remark} The states $A,B,C,D,\ldots,P$ correspond to the vertices $[i,\gamma,j]$ with $[\gamma,j]\in\Gamma_{srs}$. 
\end{remark}
One can check that $G_0$ satisfies the conditions $(i),(ii)$ and $(iii)$ of the definition of the boundary graph (Definition~\ref{DefBoundGraph}). Therefore, the following lemma holds.
\begin{lemma}\label{G0subsetBoundGraph} For all $a\geq b\geq 1$,
$$G_{0,a,b}\subset\mathcal{G}_{0,a,b}.
$$
\end{lemma}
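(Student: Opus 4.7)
Since $\mathcal{G}_{0,a,b}$ is by definition the \emph{largest} graph satisfying conditions $(i)$, $(ii)$ and $(iii)$ of Definition~\ref{DefBoundGraph}, it suffices to verify that every vertex and every edge of $G_{0,a,b}$ (as listed in Table~\ref{SRBG1} together with the involution $S\mapsto S^-$ from Definition~\ref{def:G0}) meets these three conditions. The verification splits naturally into three independent parts.

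\textbf{(i) Bound on the translation vectors.} For each vertex $[i,\gamma,j]\in R_{0,a,b}$ listed in Table~\ref{SRBG1}, the coordinate of $\gamma$ in $\pi(\mathbb{Z}^3)$ is explicitly given. The plan is to bound $\|\gamma\|$ directly from the definition of the norm $\|\cdot\|$ on $\mathbb{H}_c$ using the eigenvectors $\mathbf{v}_{\alpha_1},\mathbf{v}_{\alpha_2}$, and to compare it with the right-hand side of~(\ref{BoundBound}), which is a uniform upper estimate related to the diameter of $\T$. Since condition $(i)$ does not concern sharp bounds but only membership in a fixed bounded region, the finitely many vertices of $R_{0,a,b}$ (together with their images under $S\mapsto S^-$) can all be checked in a uniform way, and passing to $S^-$ changes $\gamma$ into $-\gamma$, preserving the norm.

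\textbf{(ii) Edges satisfy the algebraic relation.} For every edge $S\xrightarrow{p_1|p_2}T$ of $G_0$ listed in Table~\ref{SRBG1}, the plan is to check the three subconditions of $(ii)$: that $i\xrightarrow{p_1}i_1$ and $j\xrightarrow{p_2}j_1$ are edges of the prefix-suffix graph $\Gamma_{a,b}$ (this amounts to reading off $\sigma_{a,b}(i_1)=p_1 i s$ and $\sigma_{a,b}(j_1)=p_2 j s'$ from~(\ref{DefSubst}), which is why the prefixes have the form $\underbrace{1\cdots 1}_{k}$), and that $\mathbf{h}\gamma_1=\gamma+\pi(\lBF(p_2)-\lBF(p_1))$. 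Since every prefix in the table is a power of $1$, the abelianization $\lBF(p)$ is of the form $k\mathbf{e}_1$, and the linear relation reduces to a single identity in $\pi(\mathbb{Z}^3)$ that depends affinely on the index $k$. Thus, for each row in Table~\ref{SRBG1}, one checks a single vector identity and then observes that it remains valid along the whole range of $k$ by linearity. The extra edges involving $S^-$ are handled automatically: the algebraic relation is symmetric under the substitution $(\gamma,p_1,p_2)\mapsto(-\gamma,p_2,p_1)$, which is exactly the rule used to generate those edges in Definition~\ref{def:G0}.

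\textbf{(iii) Reachability from admissible starting vertices.} This is the most delicate part of the verification. According to the remark following Table~\ref{SRBG1}, the states $A,B,C,D,\ldots,P$ are precisely the vertices $[i,\gamma,j]$ with $[\gamma,j]\in\Gamma_{srs}$. Among them, those with $\gamma\ne 0$ or $i<j$ (for instance $J$ and $K$, and all the ``letters'' with nonzero second coordinate) are legitimate starting vertices for condition $(iii)$. The plan is to exhibit, for each vertex $v\in R_{0,a,b}$: \emph{(a)} a finite walk in $G_0$ from some admissible starting vertex to $v$, using the edges of Table~\ref{SRBG1}; and \emph{(b)} an infinite walk in $G_0$ starting at $v$, for example by locating $v$ on a cycle of $G_0$. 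For the ``primed'' states $S^-$, the involution symmetry of the edge set reduces the task to the non-primed states. The strong connectedness of $G_0$, announced in Lemma~\ref{PosEVContact} and used throughout the paper, is exactly the statement that ensures both $(a)$ and $(b)$ can be carried out simultaneously, so the verification reduces to exhibiting cycles through each equivalence class of vertices up to the involution.

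\textbf{Main obstacle and special cases.} The core of the computation is bookkeeping; the only genuine subtlety is to handle the degenerate parameter ranges $a=b=1$, $a\geq 2,b=1$ and $a=b\geq 2$ appearing in Definition~\ref{def:G0}, where certain vertices (namely $G,G^-,I,I^-,N^-$) drop out and auxiliary vertices $L$ and $M$ are added. For each of these cases, one must re-check $(iii)$ to confirm that no vertex becomes isolated after the deletions, and that the added vertices $L,M$ are themselves reached from admissible starting vertices through the edges listed in the corresponding rows of Table~\ref{SRBG1}. This case analysis is the main obstacle, and is handled exactly by inspecting the cycles passing through $L$ or $M$ in each parameter regime. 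Once $(i)$, $(ii)$ and $(iii)$ are verified, the maximality of $\mathcal{G}_{0,a,b}$ among graphs satisfying these three conditions immediately yields $G_{0,a,b}\subset \mathcal{G}_{0,a,b}$.
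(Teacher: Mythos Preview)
Your proposal is correct and follows exactly the approach indicated by the paper, which simply states that ``one can check that $G_0$ satisfies the conditions $(i),(ii)$ and $(iii)$'' of Definition~\ref{DefBoundGraph} and then invokes the maximality of $\mathcal{G}_{0,a,b}$. Two small remarks: the graph whose core is strongly connected in Lemma~\ref{PosEVContact} is $G$, not $G_0$ (the states $J,K,L$ are sources), and for $(iii)$ the minus states $S^-$ are not handled by the involution alone but rather by observing that the non-minus states $E,F,H,I,P$ already emit edges into them---otherwise your plan is a faithful and more detailed unpacking of the paper's one-line verification.
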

\begin{remark}\label{rem:G0vsCont}The graph $G_0$ is related to the \emph{contact graph} defined in~\cite{Thuswaldner06} or~\cite{SiegelThuswaldner10}. This notion of contact graph was first introduced by Gr\"ochenig and Haas~\cite{GroechenigHaas94} in the context of self-affine tiles (see also~\cite{ScheicherThuswaldner01}). For substitution tiles, the contact graph is obtained from a sequence of polygonal approximations of the Rauzy fractal constructed via the dual substitutions on the stepped surface (see~\cite{ArnouxIto01}). Each approximation gives rise to a polygonal tiling of the stepped surface. In these tilings, the structure of the \emph{adjacent neighbors} (neighbors whose intersection with the approximating central tile has non-zero 1-dimensional Lebesgue measure) stabilizes after finitely many steps. The collection of adjacent neighbors of a good enough polygonal approximation of the Rauzy fractal results in the set $R_0$ of Definition~\ref{def:G0}. 
\end{remark}

\begin{proposition}[{\cite[Theorem 4.3]{Thuswaldner06}}]
\label{prop:ContactGIFS}
Let $a\geq b\geq 1$ and $\sigma=\sigma_{a,b}$ the substitution defined in~\ref{DefSubst}. Consider the graph $G_0=G_{0,a,b}$ of Definition~\ref{def:G0}. We denote by $R_{a,b}=R\subset R_0$ the set
$$\begin{array}{c}
R=\{A,B,C,D,E,F,G,H,I,N,O,P\}\\\\
\cup\;\{M\}\setminus\{I\},\textrm{ if }a\geq 2,b=1\\\\
\setminus\{G\},\textrm{ if }a= b\geq 2\\\\
\cup\;\{M\}\setminus\{G,I\},\textrm{ if }a=b=1.
\end{array}
$$ 
Then
$$\begin{array}{c}
\partial \T=\bigcup_{[i,\gamma,j]\in R} C[i,\gamma,j],
\end{array}
$$
where the sets $C[i,\gamma,j]$ $([i,\gamma,j]\in R_0)$ are the solutions of the GIFS directed by $G_0$, \emph{i.e.},
\begin{equation}\label{ContactGIFS}
\begin{array}{rcl}
\forall [i,\gamma,j]\in R_0, \;C[i,\gamma,j]&=&\bigcup_{[i,\gamma,j]\xrightarrow{p|p'}[i_1,\gamma_1,j_1]\in G_0}\hBF C[i_1,\gamma_1,j_1]+\pi\lBF( p)\\\\
&\subset&\T( i )\cap(\T ( j )+\gamma)
\end{array}
\end{equation}   

\end{proposition}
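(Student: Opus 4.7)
The plan is to deduce this in two movements: first, set up the GIFS attractor and locate each $C[i,\gamma,j]$ inside the corresponding intersection of tiles; second, verify that taking the union of the $C[i,\gamma,j]$ over the subset $R$ recovers the whole boundary $\partial\T$. For the first movement, since $G_0\subset \mathcal{G}_0$ by Lemma~\ref{G0subsetBoundGraph} and since each contraction $x\mapsto \hBF x+\pi\lBF(p)$ attached to an edge of $G_0$ is a genuine contraction on $\mathbb{H}_c$, the Mauldin--Williams theorem~\cite{MauldinWilliams88} produces a unique family of non-empty compact sets $(C[i,\gamma,j])_{[i,\gamma,j]\in R_0}$ fulfilling~(\ref{ContactGIFS}). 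Iterating~(\ref{ContactGIFS}), every point of $C[i,\gamma,j]$ has the form $\sum_{k\geq 0}\hBF^k\pi\lBF(p_k)$ for some infinite walk $[i,\gamma,j]\xrightarrow{p_0|p_0'}[i_1,\gamma_1,j_1]\xrightarrow{p_1|p_1'}\cdots$ in $G_0$, hence in $\mathcal{G}_0$; Lemma~\ref{CharacBound} then forces $C[i,\gamma,j]\subset \T(i)\cap(\T(j)+\gamma)$, which incidentally also proves the intersections are non-empty. The states listed in $R$ are precisely the $[i,\gamma,j]\in R_0$ with $[\gamma,j]\in\Gamma_{srs}$ and $\gamma\ne 0$ (as remarked after Table~\ref{SRBG1}), so $R\subset \mathcal{S}$ and Proposition~\ref{GSGIFS} yields $\bigcup_{[i,\gamma,j]\in R}C[i,\gamma,j]\subset \partial\T$.

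The nontrivial inclusion $\partial\T\subset \bigcup_{[i,\gamma,j]\in R}C[i,\gamma,j]$ is where the bulk of the work lies, and it is handled by the contact graph technique indicated in Remark~\ref{rem:G0vsCont}. The idea is to introduce the dual substitution $E_1^*(\sigma)$ acting on the stepped surface (see~\cite{ArnouxIto01}), start from the polygonal patch $U_0$ formed by the three unit faces at the origin and iterate $U_{n+1}=E_1^*(\sigma)(U_n)$; after rescaling by $\hBF^n$, the patches $\hBF^n U_n$ converge to $\T$ in the Hausdorff metric and induce polygonal tilings of $\mathbb{H}_c$ whose central tile approximates $\T$. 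The \emph{adjacent neighbors} of the central tile, those whose intersection with it has positive one-dimensional Lebesgue measure, stabilize after finitely many iterations, and a direct combinatorial bookkeeping of the edges of the rescaled faces produces exactly the vertex set $R_0$ of Definition~\ref{def:G0} together with the transitions recorded in Table~\ref{SRBG1}. Passing to the Hausdorff limit, every point of $\partial\T$ appears as a limit of intersections of the central tile with one of its adjacent neighbors, hence belongs to some $C[i,\gamma,j]$ with $[i,\gamma,j]\in R$. This is the content of~\cite[Theorem~4.3]{Thuswaldner06}, whose proof we quote.

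The main obstacle is the case-by-case combinatorial verification on the stepped surface that the graph produced stabilizes exactly to $G_0$: one must check that no further states or edges appear under further iterations of $E_1^*(\sigma)$, and that the symmetry $S\leftrightarrow S^-$ imposed in Definition~\ref{def:G0} faithfully accounts for the boundary pieces $B[i,\gamma,j]$ attached to the symmetric neighbors. Because the prefix lengths in $\sigma_{a,b}$ depend sensitively on the relative sizes of $a$ and $b$, the shape of $R_0$, and hence of $R$, splits into the three subcases $b=1$, $a=b\geq 2$, and $a>b\geq 2$ listed in the statement; the disappearance of $G$, $I$ and the appearance of $L$, $M$ in degenerate cases reflect the vanishing of certain prefixes. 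Once this bookkeeping is carried out, both inclusions assemble at once to give the stated identity $\partial\T=\bigcup_{[i,\gamma,j]\in R}C[i,\gamma,j]$.
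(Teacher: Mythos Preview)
Your two-movement outline matches the paper's: the inclusion $C[i,\gamma,j]\subset\T(i)\cap(\T(j)+\gamma)$ is obtained exactly as you say (via $G_0\subset\mathcal{G}_0$ and the GIFS attractor description), and the reverse inclusion is indeed imported from Thuswaldner's contact-graph construction. However, you treat \cite[Theorem~4.3]{Thuswaldner06} as if it were stated directly for the graph $G_0$ of Definition~\ref{def:G0}, and this is where a real step is missing.

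Thuswaldner's graph $\mathcal{C}_\partial$ is built with \emph{two types of edges} (type~1 and type~2, cf.\ Remark~\ref{rem:2typesedges}) and the Rauzy subtiles there are defined via \emph{suffixes} rather than prefixes, i.e.\ as sets $X_i=\bigcup_{\sigma(j)=pis}\hBF X_j+\pi\lBF(s)$, so that $\T(i)=-X_i-\pi\lBF(i)$. The paper's proof is almost entirely devoted to this translation: Step~1 doubles the state set to eliminate the type-2 edges (introducing the $S^-$ states), and Step~2 replaces each edge label $(p_1,i,s_1)|(p_2,j,s_2)$ by the prefix pair $p_2|p_1$ while simultaneously shifting the vertex $[i,\gamma,j]$ to $[j,\gamma-\pi\lBF(j)+\pi\lBF(i),i]$, using the identity $\pi\lBF(s)=\hBF\pi\lBF(j)-\pi\lBF(p)-\pi\lBF(i)$. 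Only after checking that this two-step rewriting sends $\mathcal{C}_\partial$ to $G_0$ and that $-C^1[i,\gamma,j]-\pi\lBF(i)=C[j,\gamma-\pi\lBF(j)+\pi\lBF(i),i]$ can one transport Thuswaldner's boundary description for $X$ to the desired one for $\T$. Your sketch of the stepped-surface argument is a fair summary of what happens inside \cite{Thuswaldner06}, but it does not by itself give the result for the graph $G_0$ as defined here; the conversion is the actual content of the proof in the paper and should not be absorbed into the phrase ``whose proof we quote''.
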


\begin{proof} The last inclusion is an easy consequence of Lemma~\ref{G0subsetBoundGraph} and Proposition~\ref{GSGIFS}.

The lengthy proof is given in~\cite[Section 6]{Thuswaldner06}. However, in that article, two types of edges are used and the Rauzy fractals are defined in terms of suffixes instead of prefixes. We refer to Remark~\ref{rem:2typesedges} and to~\cite[Section 4.3]{Thuswaldner06} as well as~\cite{ArnouxIto01}. The correspondence with our setting reads as follows.

 Let $\mathcal{C}=\mathcal{C}_{a,b}$ be the graph as in~\cite[Theorem 6.2]{Thuswaldner06}, depicted in Figures 9 and 10 within this reference, and $\mathcal{C}_\partial$ the subgraph obtained from $\mathcal{C}$ after successively deleting the states having no outgoing edges (as in \cite[Definition 4.5]{Thuswaldner06}). For a state $S=[(0,0,0),i],[\gamma,j]$ occurring in \cite[Figures 9-10]{Thuswaldner06}, we shall simply write $S=[i,\gamma,j]$.
\begin{itemize}
\item[\emph{Step 1}.] The aim is to remove the two types of edges. By~\cite[Definition 3.6]{Thuswaldner06}, an edge 
$$
[i,\gamma,j]\xrightarrow{(p_1,i,s_1)|(p_2,j,s_2)}[i',\gamma',j']\in\mathcal{C}_\partial
$$
is 
\begin{itemize}
\item of type 1 if 
\begin{equation}\label{def:type1}
\sigma(i')=p_1 is_1,\sigma(j')=p_2js_2\textrm{ and }\mathbf{h}\gamma'=\gamma+\pi\mathbf{l}(s_2)-\pi\mathbf{l}(s_1)
\end{equation}
\item of type 2 if
\begin{equation}\label{def:type2}
\sigma(j')=p_1 is_1,\sigma(i')=p_2js_2,\textrm{ and }-\mathbf{h}\gamma'=\gamma+\pi\mathbf{l}(s_2)-\pi\mathbf{l}(s_1).
\end{equation}
\end{itemize}
Replace each edge 
$$S\xrightarrow{(p_1,i,s_1)|(p_2,j,s_2)}T\in\mathcal{C}_\partial$$ 
of type 1 by two edges
$$S\xrightarrow{(p_1,i,s_1)|(p_2,j,s_2)}T\;\;\textrm{ and }\;\;S^-\xrightarrow{(p_2,j,s_2)|(p_1,i,s_1)}T^-,
$$ 
and each edge 
$$S\xrightarrow{(p_1,i,s_1)|(p_2,j,s_2)}T\in\mathcal{C}_\partial$$ 
of type 2 by two edges
$$S\xrightarrow{(p_1,i,s_1)|(p_2,j,s_2)}T^-\;\;\textrm{ and }\;\;S^-\xrightarrow{(p_2,j,s_2)|(p_1,i,s_1)}T.
$$ 
Here, for $S=[i,\gamma,j]$ state of $\mathcal{C}_\partial$, we wrote $S^-:= [j,-\gamma,i]$. See also~\cite[Section 7, Proof of Theorem 5.6]{SiegelThuswaldner10}. This procedure results in a graph whose number of states has doubled. Delete successively the states $S^-$ having no incoming edges. We denote by $\mathcal{C}_\partial^1$ the remaining graph. Note that all edges in this graph now satisfy the relation~(\ref{def:type1}). 
\item[\emph{Step 2}.]  The aim is to use prefixes instead of suffixes. Note that if $X_i$ is defined as in~\cite{Thuswaldner06} by
$$X_i=\bigcup_{\sigma(j)=pis}\mathbf{h}X_j+\pi\mathbf{l}(s),
$$
then we have
$$\mathcal{T}( i )=-X_i-\pi\mathbf{l}(i). 
$$
This uses the unicity of the attractor solution of~(\ref{TileGIFS}) and the relation~(\ref{rel:abel}): 
for $\sigma(j)=pis$ in the above union, we have
$$\pi\mathbf{l}( s)=\pi\mathbf{l}(\sigma(j))-\pi\mathbf{l}( p)-\pi\mathbf{l}( i)=\mathbf{h}\pi\mathbf{l}(j)-\pi\mathbf{l}( p)-\pi\mathbf{l}( i)$$ 
Replace each edge 
$$[i,\gamma,j]\xrightarrow{(p_1,i,s_1)|(p_2,j,s_2)}[i',\gamma',j']\in\mathcal{C}_\partial^1$$  by an edge
$$[j,\gamma-\pi\mathbf{l}(j)+\pi\mathbf{l}(i),i]\xrightarrow{p_2|p_1}[j',\gamma'-\pi\mathbf{l}(j')+\pi\mathbf{l}(i'),i'].
$$
This change relies on the following computation. For an edge in $\mathcal{C}_\partial^1$ as above, we have the relation~(\ref{def:type1}). In particular,
$$\mathbf{h}\gamma'=\gamma+\pi\mathbf{l}(s_2)-\pi\mathbf{l}(s_1), 
$$
which is equivalent to 
$$\mathbf{h}\left(\gamma'-\pi\mathbf{l}(j')+\pi\mathbf{l}(i')\right)=\gamma-\pi\mathbf{l}(j)+\pi\mathbf{l}(i)+\pi\mathbf{l}(p_1)-\pi\mathbf{l}(p_2),
$$
again by using~(\ref{rel:abel}). The resulting graph is $G_0$. 
\end{itemize}

We write $X:=\bigcup_{i=1}^3X_i$. By~\cite[Theorem 4.3]{Thuswaldner06},  
\begin{equation}\label{BoundEq1}
\partial X=\bigcup_{[i,\gamma,j]\in R^1} C^1[i,\gamma,j]
\end{equation}
and 
\begin{equation}\label{BoundEq2}
\forall\;i=1,2,3,\; \partial X_i=\bigcup_{[i,\gamma,j]\in R^1} C^1[i,\gamma,j]\;\;\cup\;\;\bigcup_{[i,0,j]\in R_0^1} C^1[i,0,j],
\end{equation}
where the sets $C^1[i,\gamma,j]$ $([i,\gamma,j]\in R^1_0)$ are the solutions of the GIFS directed by $\mathcal{C}_\partial^1$, \emph{i.e.},
$$
\begin{array}{rcl}
\forall [i,\gamma,j]\in R_0^1, \;C^1[i,\gamma,j]&=&\bigcup_{[i,\gamma,j]\xrightarrow{(p_1,i,s_1)|(p_2,j,s_2)}[i_1,\gamma_1,j_1]\in \mathcal{C}_\partial^1}\hBF C^1[i_1,\gamma_1,j_1]+\pi\lBF( s_1)\\
&\subset&X_i\cap(X_j+\gamma).
\end{array}
$$ 
Here, the sets $R^1,R_0^1$ are defined for the graph $\mathcal{C}_\partial^1$ analogously to $R,R_0$. In particular,
$$R=\{[j,\gamma-\pi\mathbf{l}(j)+\pi\mathbf{l}(i),i];[i,\gamma,j]\in R^1 \},
$$
and a similar relation holds between $R_0$ and $R^1_0$.

By unicity of the attractor of the GIFS~(\ref{ContactGIFS}) directed by $G_0$, one can check that, for all $[i,\gamma,j]\in R_1^0$,
$$-C^1[i,\gamma,j]-\pi\lBF(i)=C[j,\gamma-\pi\lBF(j)+\pi\lBF(i),i].
$$
Using~(\ref{BoundEq2}), this leads to
$$\partial \T ( i )=\bigcup_{[i,\gamma,j]\in R} C[i,\gamma,j]\;\;\cup\;\;\bigcup_{[i,0,j]\in R_0\cap\{J,K,L\}} C[i,0,j] 
$$
for all $i=1,2,3$. As $\T=\bigcup_{i=1}^3\T (i )$ and $C[i,0,j]\subset \T (i )\cap \T (j )$, we finally obtain that
$$
\partial \T=\bigcup_{[i,\gamma,j]\in R} C[i,\gamma,j].
$$
\end{proof}

The following lemma is essential for the construction of the boundary parametrization in the next section. 
\begin{lemma}\label{PosEVContact} Let $a\geq b\geq 1$ and  $G_0=G_{0,a,b}$ as in Definition~\ref{def:G0}. We denote by $G=G_{a,b}$ the graph obtained from $G_0$ after deleting the states $J,K,L$ and all their in- and outcoming edges.  Let  $r=r_{a,b}$ be the number of states in $R_0\setminus\{J,K,L\}$ and $\mathbf{L}=\mathbf{L}_{a,b}$ the incidence matrix of $G$: 
$$\mathbf{L}=(l_{m,n})_{1\leq m,n\leq r}\;\;\textrm{ with }l_{m,n}=\#\{S_n\xrightarrow{p_1|p_2}S_m\in G\},
$$ 
where $\{S_1,\ldots,S_r\}=R_0\setminus\{J,K,L\}$. 
 Then there exists a strictly positive vector $\mathbf{u}=\mathbf{u}_{a,b}$ satisfying
 $$\mathbf{L}\mathbf{u}=\lambda\mathbf{u},
 $$
 
where $\lambda=\lambda_{a,b}$ is the largest root of the characteristic polynomial of $\mathbf{L}$. In particular, $\lambda$ is the largest root of
$$p_{a,b}(x)=x^4+(1-b)x^3+(b-a)x^2-(a+1)x-1.
$$
We normalize $\mathbf{u}=(u^{(1)},\ldots,u^{(r)})$ to have $u^{(1)}+\cdots+u^{(r)}=1$. 
\end{lemma}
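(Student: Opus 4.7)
The plan is to apply the Perron--Frobenius theorem to the nonnegative integer matrix $\mathbf{L}$. Existence and strict positivity of an eigenvector associated with the spectral radius $\lambda = \rho(\mathbf{L})$ are automatic, together with the simplicity of $\lambda$, provided $\mathbf{L}$ is irreducible, i.e.\ provided the graph $G$ is strongly connected. Normalizing by $u^{(1)} + \cdots + u^{(r)} = 1$ then makes $\mathbf{u}$ unique.

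I would proceed in three steps. First, strong connectedness of $G$ is checked directly from Table~\ref{SRBG1} and the mirror rule $S \leftrightarrow S^-$ of Definition~\ref{def:G0}. The involution $\iota : S \mapsto S^-$ preserves the edge set, so it suffices to exhibit, in each of the four parameter regimes $a > b \geq 2$, $a = b \geq 2$, $a \geq 2$ and $b = 1$, and $a = b = 1$, a directed cycle visiting every state of $R_0 \setminus \{J,K,L\}$. The central cycle $A \to C \to P \to E^- \to C^- \to P^- \to E \to \cdots \to A$ (read off from the rows of the table, picking $k = 0$ in the parameterized edge families) already touches almost every state; the remaining states are reached via the explicit edges out of $B, D, F, F^-, H, H^-, I, I^-, M, N, N^-, O, O^-$. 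Removing $J, K, L$ does not disconnect anything, since these three states receive no arrow from $R_0 \setminus \{J,K,L\}$.

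Second, Perron--Frobenius applied to the nonnegative irreducible matrix $\mathbf{L}$ gives that its spectral radius $\lambda$ is a simple positive eigenvalue with a strictly positive right eigenvector $\mathbf{u}$. Third, to identify $\lambda$ as a root of $p_{a,b}(x)$, I exploit the commutation $\mathbf{P}_\iota \mathbf{L} = \mathbf{L} \mathbf{P}_\iota$ induced by the involution $\iota$. The eigenspaces of $\mathbf{L}$ split according to the $\pm 1$-eigenspaces of $\mathbf{P}_\iota$, and since $\mathbf{u}$ is strictly positive it must lie in the symmetric part, so $u^{(S)} = u^{(S^-)}$. Substituting this into $\mathbf{L}\mathbf{u} = \lambda\mathbf{u}$ collapses the eigenequation onto the quotient $R_0 / \iota$, which reduces the effective system to the twelve symmetric coordinates $u_A, u_B, u_C, u_D, u_E, u_F, u_G, u_H, u_I, u_N, u_O, u_P$. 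Summing the geometric progressions arising from each ``$0 \leq k \leq a-b$'' or ``$0 \leq k \leq b-1$'' family of edges in Table~\ref{SRBG1} turns the eigenequation into a linear system over $\mathbb{Q}(\lambda)$; successive elimination (e.g.\ expressing $u_P, u_O, u_N, \ldots$ in terms of $u_A$ and $u_C$) leaves a single scalar compatibility condition, which after clearing denominators is exactly $p_{a,b}(\lambda) = 0$. Since the resulting polynomial divides the characteristic polynomial of $\mathbf{L}$, its other roots are also eigenvalues of $\mathbf{L}$ and therefore have modulus at most $\lambda$, so $\lambda$ is the largest root.

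The main obstacle is the bookkeeping of the last step: the state set and the ranges of the edge-parameter $k$ in Table~\ref{SRBG1} change with the regime, so one has to repeat the elimination four times and check that the same polynomial $p_{a,b}(x)$ emerges in each case. A useful simplification is that the edges from $A, C, G$, etc.\ only differ from one regime to another by which of the terms in the geometric sum are present, so one can treat all four cases in a uniform way by writing the sums as $\sum_{k=0}^{a-b} \lambda^{-k-1} u_\bullet$ and verifying that missing terms (when $a = b$, say) correspond exactly to the removed states.
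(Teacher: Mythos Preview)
Your central claim---that $G$ is strongly connected---is false, and this is precisely the subtlety the paper's proof addresses. After deleting $J,K,L$, the states $A$ and $B$ have \emph{no incoming edges}: the only arrow into $A$ came from $J$, and the only arrow into $B$ came from $K$ (inspect the ``to'' column of Table~\ref{SRBG1}). The same happens for $M$ when $b=1$ and for $N$ when $a=b$. So there is no directed cycle visiting $A$ or $B$, and plain Perron--Frobenius for irreducible matrices does not apply to $\mathbf{L}$. Your sentence ``removing $J,K,L$ does not disconnect anything, since these three states receive no arrow from $R_0\setminus\{J,K,L\}$'' confuses the two directions: that observation shows $J,K,L$ are sources, not that their removal preserves strong connectedness elsewhere.

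The paper's fix is to observe that the restriction of $G$ to $R_0\setminus\{A,B,J,K\}$ (with $M$ and/or $N$ also removed in the degenerate regimes) \emph{is} strongly connected, and that every walk from any of the remaining transient states $A,B,M,N$ reaches this strongly connected core within two steps. This block-triangular structure---a single primitive block with a few feeding sources---is what guarantees that the dominant eigenvalue of $\mathbf{L}$ equals the Perron eigenvalue of the core and that the associated eigenvector can be extended to a strictly positive vector on all of $R_0\setminus\{J,K,L\}$. Your involution argument for $u^{(S)}=u^{(S^-)}$ and the subsequent elimination down to $p_{a,b}$ is a reasonable strategy for the polynomial identification (the paper simply asserts the computation), but it must be run on top of this corrected structural picture, not on an irreducibility claim that does not hold.
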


\begin{proof} We refer to Tables~\ref{Gab},~\ref{Gab1},~\ref{Gaeqb},~\ref{Ga1b1} and the corresponding Figures~\ref{fig:Gab},~\ref{fig:Gab1},~\ref{fig:Gaeqb},~\ref{fig:Ga1b1}. Note that the restriction of the graph $G$ to the set of states 
\begin{itemize}
\item $R_0\setminus\{A,B,J,K\}$ if $a\geq b+1,b\geq 2$,
\item $R_0\setminus\{A,B,M,J,K\}$ if $a\geq 2,b=1$,
\item $R_0\setminus\{A,B,N,J,K,L\}$ if $a=b\geq 2$,
\item   $R_0\setminus\{A,B,M,N,J,K,L\}$ if $a=b=1$,
\end{itemize}
 is strongly connected. Moreover, every walk in $G$ starting from any of the remaining states $A,B,M$ or $N$ reaches this strongly connected part after at most two edges. This justifies the existence of a strictly positive eigenvector corresponding to the Perron-Frobenius eigenvalue $\lambda$ of $\mathbf{L}$, which is easily computed to be the largest root of $p_{a,b}$ for all these cases.
 \end{proof}
\begin{remark}
In general, even for the contact graph, the incidence matrix needs not have a positive dominant eigenvector. Our class of substitutions is therefore a special case.
\end{remark}

\end{section}

\begin{section}{Boundary parametrization}\label{sec:boundparam}
 Throughout this section, we fix $a\geq b\geq 1$. We will prove Theorem~\ref{ParamTheo}, that includes a parametrization of the boundary of $\T=\T_{a,b}$ based on the graph $G=G_{a.b}$. In Definition~\ref{def:GPlus}, we order the states and edges of the graph $G$. This ordering seems to be arbitrary, but it has a geometrical interpretation: it corresponds to an ordering of the boundary pieces and subpieces in the GIFS~(\ref{prop:ContactGIFS}) counterclockwise around the boundary of the Rauzy fractal $\T$. This choice of ordering will insure the left continuity of our parametrization (see the proof of Proposition~\ref{prop:CHoelder}).

\begin{definition}\label{def:GPlus} We call $G_{a,b}^+=G^+$ the ordered graph obtained from $G_{a,b}=G$ after ordering the states and edges as listed in Tables~\ref{Gab},~\ref{Gab1},~\ref{Gaeqb},~\ref{Ga1b1}, according to the values of $a,b$.

Moreover, we set $S^{--}:=S$ and call ${\bf o_{\textrm{max}}}^S$ or just ${\bf o_{\textrm{max}}}$ the number of edges starting from the state $S$. We define the following edges in $G^+$. Suppose 
\begin{itemize}
\item $S\notin\{3,4\}$ if $a\geq b+1,b\geq 2$;
\item $S\notin\{3,4,5\}$ if $a\geq 2,b=1$;
\item $S\notin\{2,3,4\}$ if $a=b\geq 2$;
\item   $S\notin\{2,3,4,5\}$ if $a=b=1$.
\end{itemize}
Then 
$$S^-\xrightarrow{p_1|p_2||{\bf o}} T^-\in G^+:\iff S\xrightarrow{p_2|p_1||{\bf o_{\textrm{max}}+1-o}} T\in G^+.$$  

Finally,  we call the states $\{1,2,\ldots,12\}$ (case $a>b$) or $\{1,2,\ldots,11\}$ (case $a=b$) the \emph{starting states} of $G^+$. A finite or infinite walk in $G^+$ is \emph{admissible} if it starts from a starting state. 

The corresponding graphs are depicted in Figures~\ref{fig:Gab},~\ref{fig:Gab1},~\ref{fig:Gaeqb},~\ref{fig:Ga1b1}. The starting states are colored. For simplicity, we write $S\xrightarrow{( m )}T$ whenever there are $m$ edges from $S$ to $T$. If $m=1$, we write the complete edge $S\xrightarrow{p||{\bf o}}T$. If $m=0$, there is simply no edge between the states. 

\end{definition}

\begin{remark}All  the edges of $G^+$ are of the form $S\xrightarrow{p_1|p_2||{\bf o}} T$. Since there is no ambiguity, we may simply write $S\xrightarrow{p_1||{\bf o}} T$ or $S\xrightarrow{{\bf o}} T$ or $(S;{\bf o})$. Note that if the condition of the last column in the tables is not fulfilled, then the edges of the corresponding line do not exist and there is exactly one edge starting from the associated state.
\end{remark}
\begin{remark} We write $(S;{\bf o_1},\ldots,{\bf o_n})$ for the walk of $G^+$ starting from the state $S$ with the edges successively labelled by ${\bf o_1},\ldots,{\bf o_n}$. By the above ordering of states and edges in $G^+$, the set of admissible walks of length $n$ ($n\geq 0$)  is lexicographically ordered, from the walk $(1;\underbrace{{\bf 1},{\bf 1},\ldots,{\bf 1}}_{n\textrm{ times }})$ to the walk $(12;\underbrace{{\bf o_{\textrm{max}}},{\bf o_{\textrm{max}}},\ldots,{\bf o_{\textrm{max}}}}_{n\textrm{ times }})$ ($a>b$) or $(11;\underbrace{{\bf o_{\textrm{max}}},{\bf o_{\textrm{max}}},\ldots,{\bf o_{\textrm{max}}}}_{n\textrm{ times }})$ ($a=b$). This holds also for the infinite admissible walks. 
\end{remark}

\begin{table}[ht]\scriptsize\begin{center}
\begin{tabular}{|c|c|c|c|c|c|}
\hline
\multicolumn{2}{|c}{Vertex} & \multicolumn{4}{|c|}{Edge(s)} \\
\hline
 \# & Order& to & Label $p_1|p_2$& Order & Condition\\
\hline
\multirow{3}{*}{$C$} &\multirow{3}{*}{$1$}  & $7$ & $ k|a-b+k,\;0 \leq k \leq b-1$  & ${\bf 1+3(b-1-k)},0\leq k\leq b-1$ &\\
    & & $5$ & $k|a-b+1+k,\; 0 \leq k \leq b-2$  &${\bf 2+3(b-2-k)},0\leq k\leq b-2$ & \\
  & & $6$ & $k|a-b+1+k,\;0 \leq k \leq b-2$ & ${\bf 3+3(b-2-k)},0\leq k\leq b-2$& \\
  \hline
\multirow{3}{*}{$N$}&\multirow{3}{*}{$2$}  & $8$ & $0|a-1$ &${\bf 1}$ & \\
   & & $9$ & $0|a-1$  &${\bf 2}$&  \\
 & & $10$ & $0|a-1$  &${\bf 3}$& \\
\hline
\multirow{4}{*}{$A$} &\multirow{4}{*}{$3$}  &  $11$ & $0|b-1$  &${\bf 1}$ &\\
  & & $12$ & $0|b-1$  &${\bf 2}$& \\
&&$1$ & $k|b-1+k,\;0 \leq k \leq a-b$  &${\bf 3+2k},0\leq k\leq a-b$&  \\
  & & $2$ & $k|b+k,\; 0 \leq k \leq a-b-1$  & ${\bf 4+2k},0\leq k\leq a-b-1$& \\
\hline
\multirow{2}{*}{$B$} &\multirow{2}{*}{$4$} & $2$ & $a-b|a$  &${\bf 1}$& \\
 & & $1$ & $a-b+1|a$  &$ {\bf 2}$& \\
 \hline
\multirow{3}{*}{$I$} &\multirow{3}{*}{$5$}   & $7^-$ & $a-1-k|b-2-k,\;0 \leq k\leq b-2$  &${\bf 1+3(b-2-k)},0\leq k\leq b-2$&  \\
  & & $6^-$ & $a-1-k|b-3-k,\;0 \leq k\leq b-3$ &${\bf 2+3(b-3-k)},0\leq k\leq b-3$& $b \geq 3$ \\
  & & $5^-$ & $a-1-k|b-3-k,\;0 \leq k\leq b-3$  &${\bf 3+3(b-3-k)},0\leq k\leq b-3$& $b \geq 3$ \\
 \hline
\multirow{3}{*}{$H$} &\multirow{3}{*}{$6$} &$6^-$ & $a|b-2$  &${\bf 1}$&  \\
  & & $5^-$ & $a|b-2$ & ${\bf 2}$& \\
 && $7^-$ & $a|b-1$  &${\bf 3}$&\\
  \hline
\multirow{3}{*}{$P$}&\multirow{3}{*}{$7$}  &  $10^-$ & $a|0$  &${\bf 1}$& \\
  & & $9^-$ & $a|0$  &${\bf 2}$&  \\
  &&$8^-$ & $a|0$ & ${\bf 3}$&\\
\hline
\multirow{2}{*}{$E$} &\multirow{2}{*}{$8$}  & $1^-$ & $a|a-b$  &${\bf 1}$& \\
  & & $2^-$ & $a|a-b-1$ &${\bf 2}$&  \\
  \hline
\multirow{2}{*}{$G$}&\multirow{2}{*}{$9$}  & $1^-$ & $a-1-k|a-b-1-k,\;  0\leq k  \leq a-b-1$  &${\bf 1+2k},0\leq k\leq a-b-1$ & \\
  &  & $2^-$ & $a-1-k|a-b-2-k,\;  0\leq k  \leq a-b-2$  &${\bf 2+2k},0\leq k\leq a-b-2$& $a \geq b+2$ \\
  \hline
 \multirow{2}{*}{$F$}& \multirow{2}{*}{$10$} & $12^-$ & $b|0$  &${\bf 1}$& \\
 & & $11^-$ & $b|0$  &${\bf 2}$& \\
 \hline
$O$&$11$  & $7$ & $b|a$ &${\bf 1}$&  \\
\hline
\multirow{2}{*}{$D$} &\multirow{2}{*}{$12$}   &  $5$ & $b-1|a$  &${\bf 1}$&\\
 &&$6$ & $b-1|a$ & ${\bf 2}$& \\
\hline
$J$&  & $3$ & $a-1|a$&  & \\
\hline
\multirow{2}{*}{$K$} &\multirow{2}{*}{}  & $4$ & $b-1|b$&  & \\
  & & $J$ & $b|b$ && \\
\hline 
\end{tabular}
\caption{$G_{0,a,b}$ for $a\geq b+1,\;b\geq 2$.}\label{Gab}
\end{center}
\end{table}

\begin{table}[ht]\scriptsize\begin{center}
\begin{tabular}{|c|c|c|c|c|c|c|}
\hline
\multicolumn{2}{|c}{Vertex} & \multicolumn{4}{|c|}{Edge(s)} \\
\hline
 \# & Order& to & Label $p_1|p_2$& Order & Condition\\
 \hline
$C$&$1$ & $7$ & $ 0|a-1$  &${\bf 1}$  &\\
\hline
\multirow{3}{*}{$N$}&\multirow{3}{*}{$2$}  & $8$ & $0|a-1$ & ${\bf 1}$& \\
  & & $9$ & $0|a-1$  &${\bf 2}$& \\
  & & $10$ & $0|a-1$  &${\bf 3}$& \\
\hline
\multirow{4}{*}{$A$} &\multirow{4}{*}{$3$}  &  $11$ & $0|0$  & ${\bf 1}$&\\
  & & $12$ & $0|0$  &${\bf 2}$& \\
&& $1$ & $k|k,\;0 \leq k \leq a-1$  &${\bf 3+2k},0\leq k\leq a-1$&  \\
  & & $2$ & $k|1+k,\; 0 \leq k \leq a-2$  & ${\bf 4+2k},0\leq k\leq a-2$& \\
\hline
$B$&$4$ & $2$ & $a-1|a$  &${\bf 1}$& \\
\hline
$M$&$5$ & $1$ & $a|a$  &${\bf 1}$&\\
\hline
$H$&$6$ & $7^-$ & $a|0$  &${\bf 1}$& \\
\hline
\multirow{3}{*}{$P$}&\multirow{3}{*}{$7$}  &  $10^-$ & $a|0$  &${\bf 1}$& \\
  & & $9^-$ & $a|0$  &${\bf 2}$&  \\
&&  $8^-$ & $a|0$ & ${\bf 3}$& \\
\hline
\multirow{2}{*}{$E$} &\multirow{2}{*}{$8$}  & $1^-$ & $a|a-1$  &${\bf 1}$& \\
  & & $2^-$ & $a|a-2$ &${\bf 2}$&  \\
  \hline
\multirow{2}{*}{$G$}&\multirow{2}{*}{$9$}  & $1^-$ & $a-1-k|a-2-k,\;  0\leq k  \leq a-2$  &${\bf 1+2k},0\leq k\leq a-2$& \\
  &  & $2^-$ & $a-1-k|a-3-k,\;  0\leq k  \leq a-3$  &${\bf 2+2k},0\leq k\leq a-3$& $a \geq 3$ \\
  \hline
 \multirow{2}{*}{$F$}& \multirow{2}{*}{$10$} & $12^-$ & $1|0$  &${\bf 1}$& \\
 & & $11^-$ & $1|0$  &${\bf 2}$& \\
 \hline
$O$&$11$  & $7$ & $1|a$ &${\bf 1}$&  \\
\hline
$D$ &$12$   &$6$ & $0|a$ &${\bf 1}$ & \\
  \hline
$J$&  & $3$ & $a-1|a$&  & \\
\hline
\multirow{3}{*}{$K$} &\multirow{3}{*}{}  & $4$ & $0|1$&  & \\
  &  &$J$ & $1|1$ && \\
  &  &$5$ & $0|1$&& \\
  \hline 
\end{tabular}
\caption{$G_{0,a,b}$ for $a\geq 2,\;b=1$.}\label{Gab1}
\end{center}
\end{table}

\begin{table}[ht]\scriptsize\begin{center}
\begin{tabular}{|c|c|c|c|c|c|c|}
\hline
\multicolumn{2}{|c}{Vertex} & \multicolumn{4}{|c|}{Edge(s)} \\
\hline
 \# & Order& to & Label $p_1|p_2$& Order & Condition\\
 \hline
\multirow{3}{*}{$C$} &\multirow{3}{*}{$1$}  & $7$ & $ k|k,\;0 \leq k \leq a-1$  &${\bf 1+3(a-1-k)},0\leq k\leq a-1$  &\\
  & & $5$ & $k|1+k,\; 0 \leq k \leq a-2$  &${\bf 2+3(a-2-k)},0\leq k\leq a-2$ & \\
  & & $6$ & $k|1+k,\;0 \leq k \leq a-2$ & ${\bf 3+3(a-2-k)},0\leq k\leq a-2$& \\

\hline
\multirow{2}{*}{$N$}&\multirow{2}{*}{$2$}  & $8$ & $0|a-1$ & ${\bf 1}$& \\
  & & $9$ & $0|a-1$  &${\bf 2}$& \\
\hline
\multirow{3}{*}{$A$} &\multirow{3}{*}{$3$}  & $10$ & $0|a-1$  &${\bf 1}$ &\\
  & & $11$ & $0|a-1$  &${\bf 2}$& \\
 &&$1$ & $0|a-1$  &${\bf 3}$&  \\
\hline
\multirow{2}{*}{$B$} &\multirow{2}{*}{$4$} & $2$ & $0|a$  &${\bf 1}$& \\
 & & $1$ & $1|a$  &${\bf 2}$ & \\

\hline
\multirow{3}{*}{$I$} &\multirow{3}{*}{$5$}   & $7^-$ & $a-1-k|a-2-k,\;0 \leq k\leq a-2$  &${\bf 1+3(a-2-k)},0\leq k\leq a-2$&  \\
  & & $6^-$ & $a-1-k|a-3-k,\;0 \leq k\leq a-3$ &${\bf 2+3(a-3-k)},0\leq k\leq a-3$& $a \geq 3$\\
  & & $5^-$ & $a-1-k|a-3-k,\;0 \leq k\leq a-3$  &${\bf 3+3(a-3-k)},0\leq k\leq a-3$& $a \geq 3$ \\
\hline
\multirow{3}{*}{$H$} &\multirow{3}{*}{$6$} & $6^-$ & $a|a-2$  &${\bf 1}$& \\
  & & $5^-$ & $a|a-2$ &${\bf 2}$ & \\
&&$7^-$ & $a|a-1$  &${\bf 3}$& \\
\hline
\multirow{2}{*}{$P$}&\multirow{2}{*}{$7$}    &  $9^-$ & $a|0$  &${\bf 1}$& \\
&& $8^-$ & $a|0$ & ${\bf 2}$& \\
\hline
$E$ &$8$  & $1^-$ & $a|0$  &${\bf 1}$& \\
\hline
 \multirow{2}{*}{$F$}& \multirow{2}{*}{$9$} & $11^-$ & $a|0$  &${\bf 1}$& \\
 & & $10^-$ & $a|0$  &${\bf 2}$& \\

\hline
$O$&$10$  & $7$ & $a|a$ &${\bf 1}$&  \\
\hline
\multirow{2}{*}{$D$} &\multirow{2}{*}{$11$}    &  $5$ & $a-1|a$  &${\bf 1}$&\\
&&$6$ & $a-1|a$ &${\bf 2}$ & \\

\hline
$J$&  & $3$ & $a-1|a$&  & \\
\hline
$K$ &  & $4$ & $a-1|a$&  & \\
\hline
$L$ & & $J$ & $a|a$  &&\\

  \hline 
\end{tabular}
\caption{$G_{0,a,b}$ for $a=b\geq 2$.}\label{Gaeqb}
\end{center}
\end{table}

\begin{table}[ht]\scriptsize\begin{center}
\begin{tabular}{|c|c|c|c|c|}
\hline
\multicolumn{2}{|c}{Vertex} & \multicolumn{3}{|c|}{Edge(s)} \\
\hline
 \# &Order & to & Label $p_1|p_2$&Order\\
 \hline
$C$ &$1$  & $7$ & $ 0|0$  &{\bf 1 } \\
\hline
\multirow{2}{*}{$N$}&\multirow{2}{*}{$2$}  & $8$ & $0|0$  &{\bf 1 } \\
  & & $9$ & $0|0$  &{\bf 2} \\
\hline
\multirow{3}{*}{$A$} &\multirow{3}{*}{$3$}  & $10$ & $0|0$  &{\bf 1 }\\ 
  & & $11$ & $0|0$  &{\bf 2 } \\
&&$1$ & $0|0$  & {\bf 3 } \\
\hline
$B$ &$4$ & $2$ & $0|0$  & {\bf 1 } \\
\hline
$M$&$5$ & $1$ & $1|1$  & {\bf 1 } \\
\hline
$H$ &$6$ & $7^-$ & $1|0$  &{\bf 1 } \\
\hline
\multirow{2}{*}{$P$}&\multirow{2}{*}{$7$}    &  $9^-$ & $1|0$  &{\bf 1 } \\
&& $8^-$ & $1|0$ &{\bf 2 } \\

\hline
$E$&$8$  & $1^-$ & $1|0$  &{\bf 1 } \\
\hline
 \multirow{2}{*}{$F$}& \multirow{2}{*}{$9$} & $11^-$ & $1|0$  &{\bf 1 } \\
 & & $10^-$ & $1|0$  &{\bf 2 } \\
 \hline
$O$&$10$  & $7$ & $1|1$ & {\bf 1 } \\
\hline
$D$ &$11$  &$6$ & $0|1$  &{\bf 1 } \\
\hline
$J$&  & $3$ & $0|1$&   \\
\hline
\multirow{2}{*}{$K$} &\multirow{2}{*}{}  & $4$ & $0|1$&   \\
  & & $5$ & $0|1$& \\
\hline
$L$&  & $J$ & $1|1$  & \\
  \hline 
\end{tabular}
\caption{$G_{0,a=1,b=1}$ (Tribonacci substitution).}\label{Ga1b1}
\end{center}
\end{table}

\begin{figure}
\begin{center}
\includegraphics[width=170mm,height=170mm]{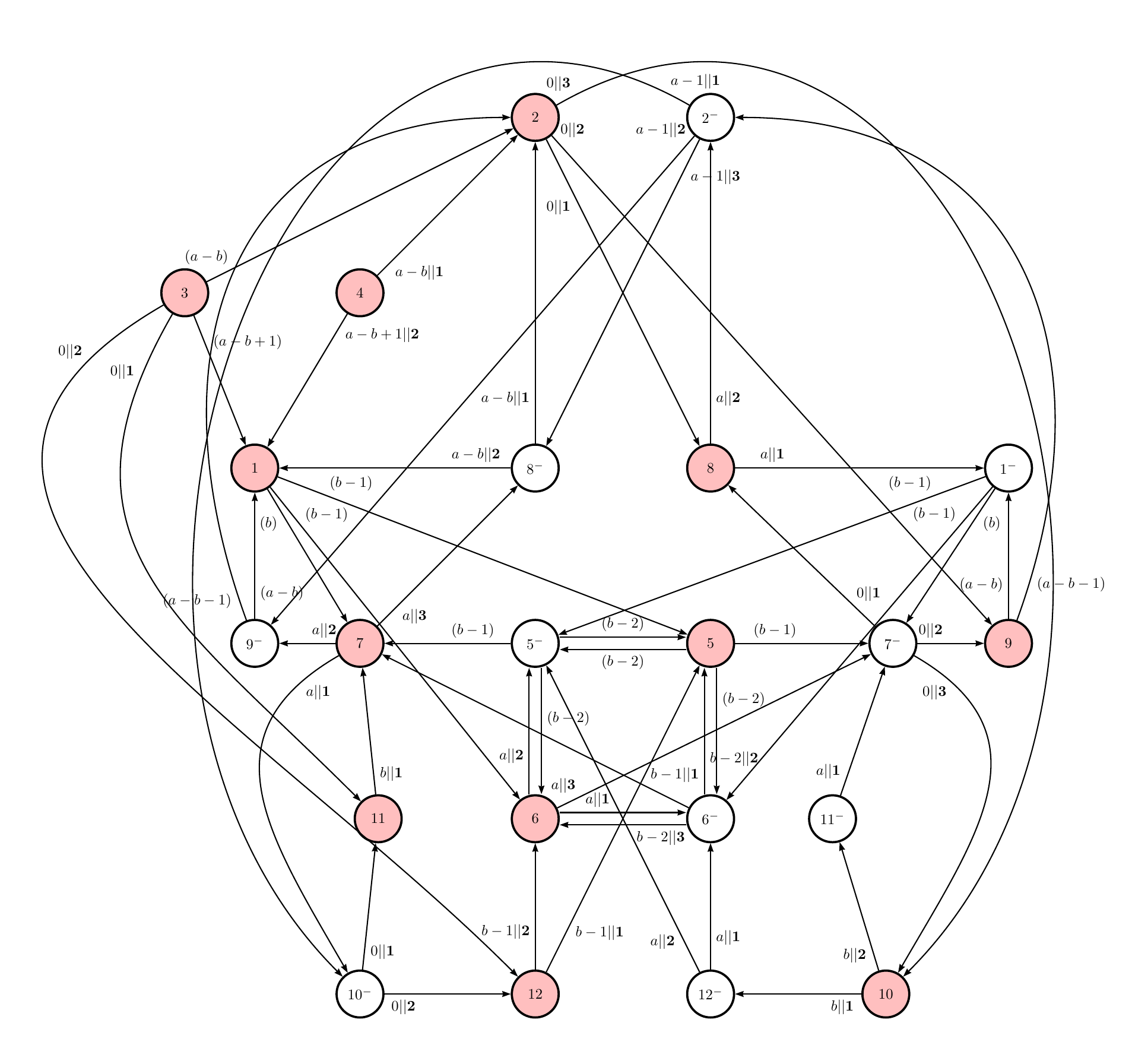}\end{center}
\caption{$G_{a,b}^+$ for $a\geq b+1,b\geq 2$.}\label{fig:Gab}
\end{figure}

\begin{figure}
\begin{center}
\includegraphics[width=170mm,height=160mm]{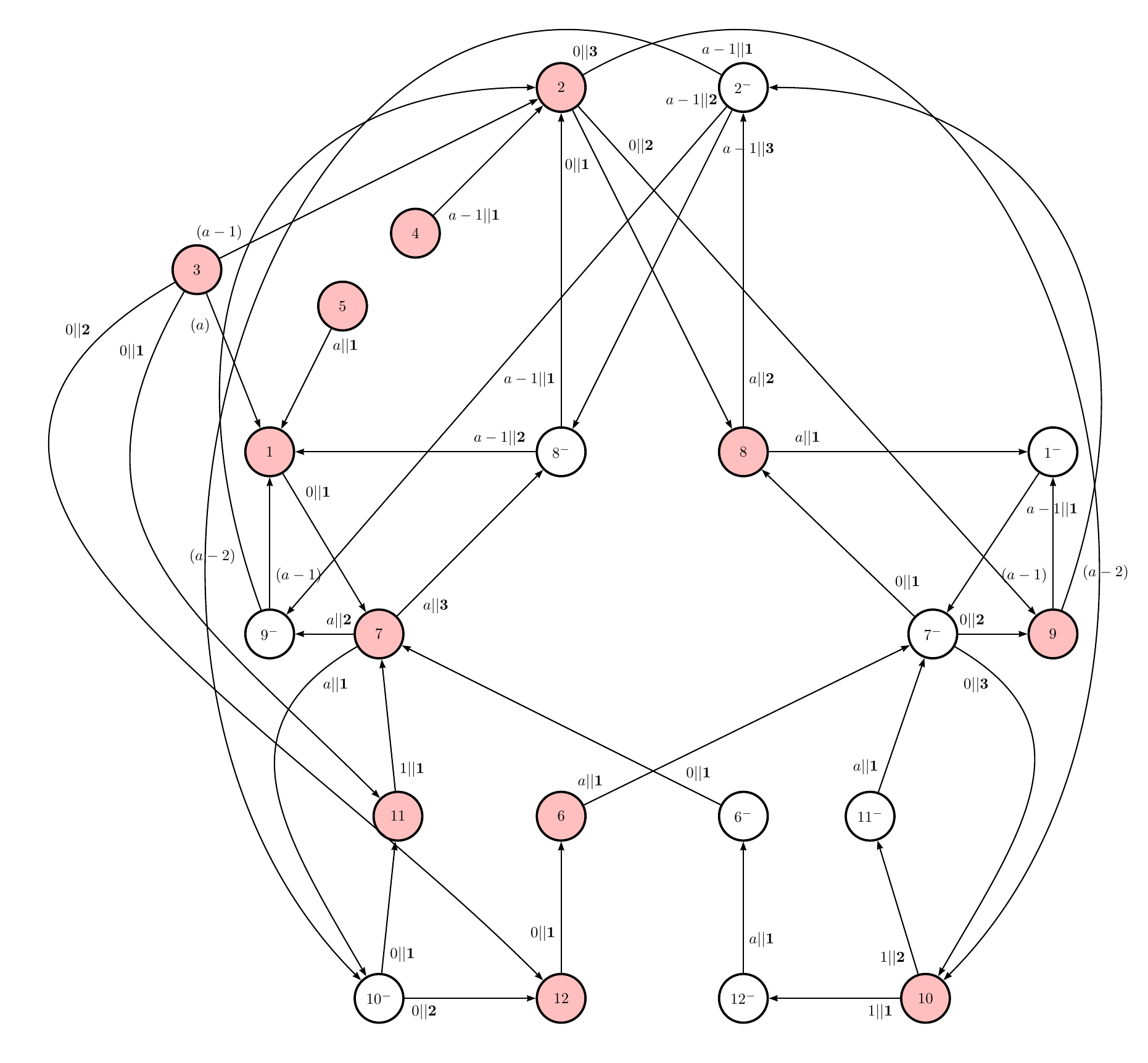}\end{center}
\caption{$G_{a,b}^+$ for $a\geq 2,b=1$.}\label{fig:Gab1}
\end{figure}

\begin{figure}
\begin{center}
\includegraphics[width=150mm,height=150mm]{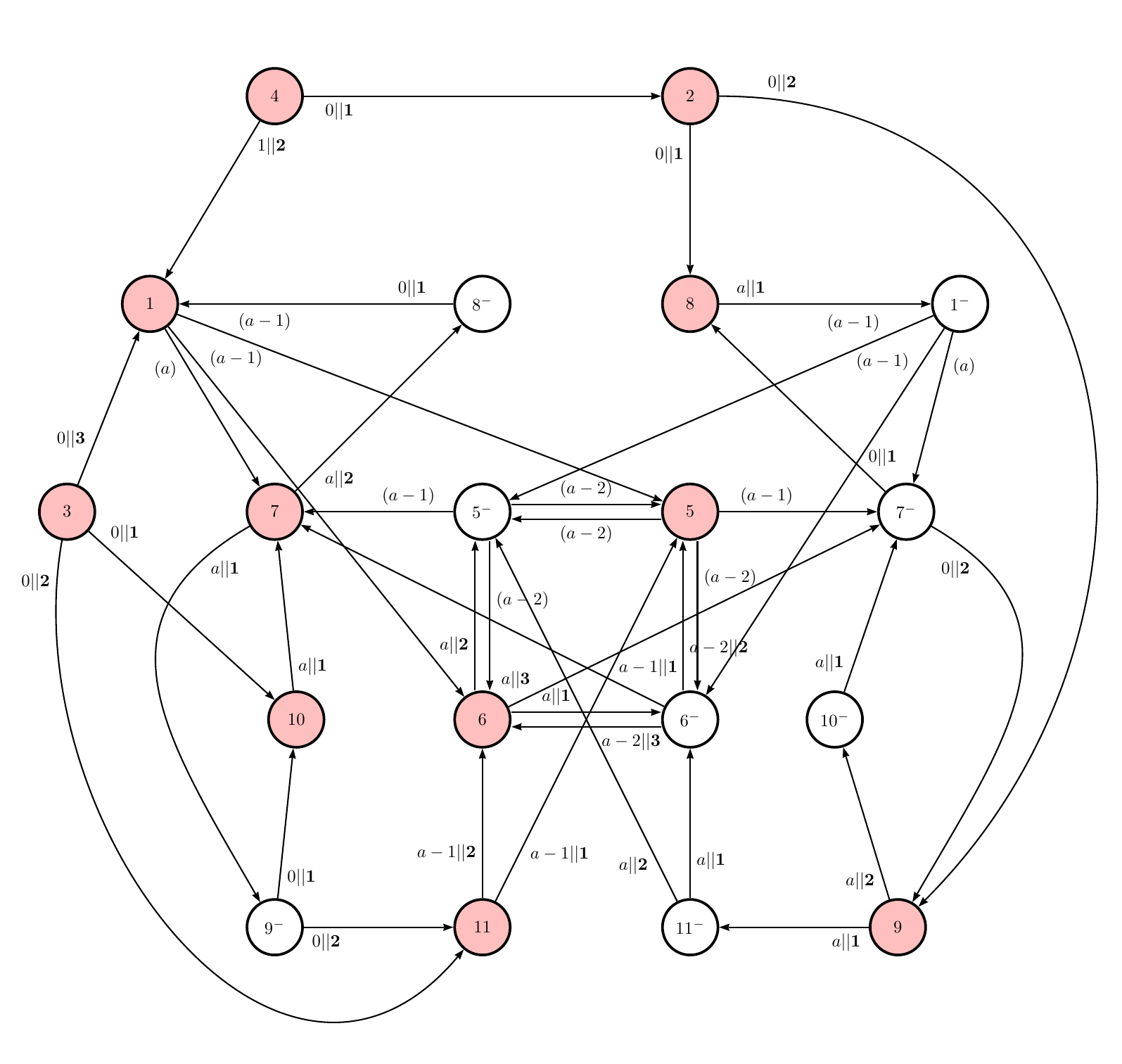}\end{center}
\caption{$G_{a,b}^+$ for $a= b\geq 2$.}\label{fig:Gaeqb}
\end{figure}

\begin{figure}
\begin{center}
\includegraphics[width=140mm,height=130mm]{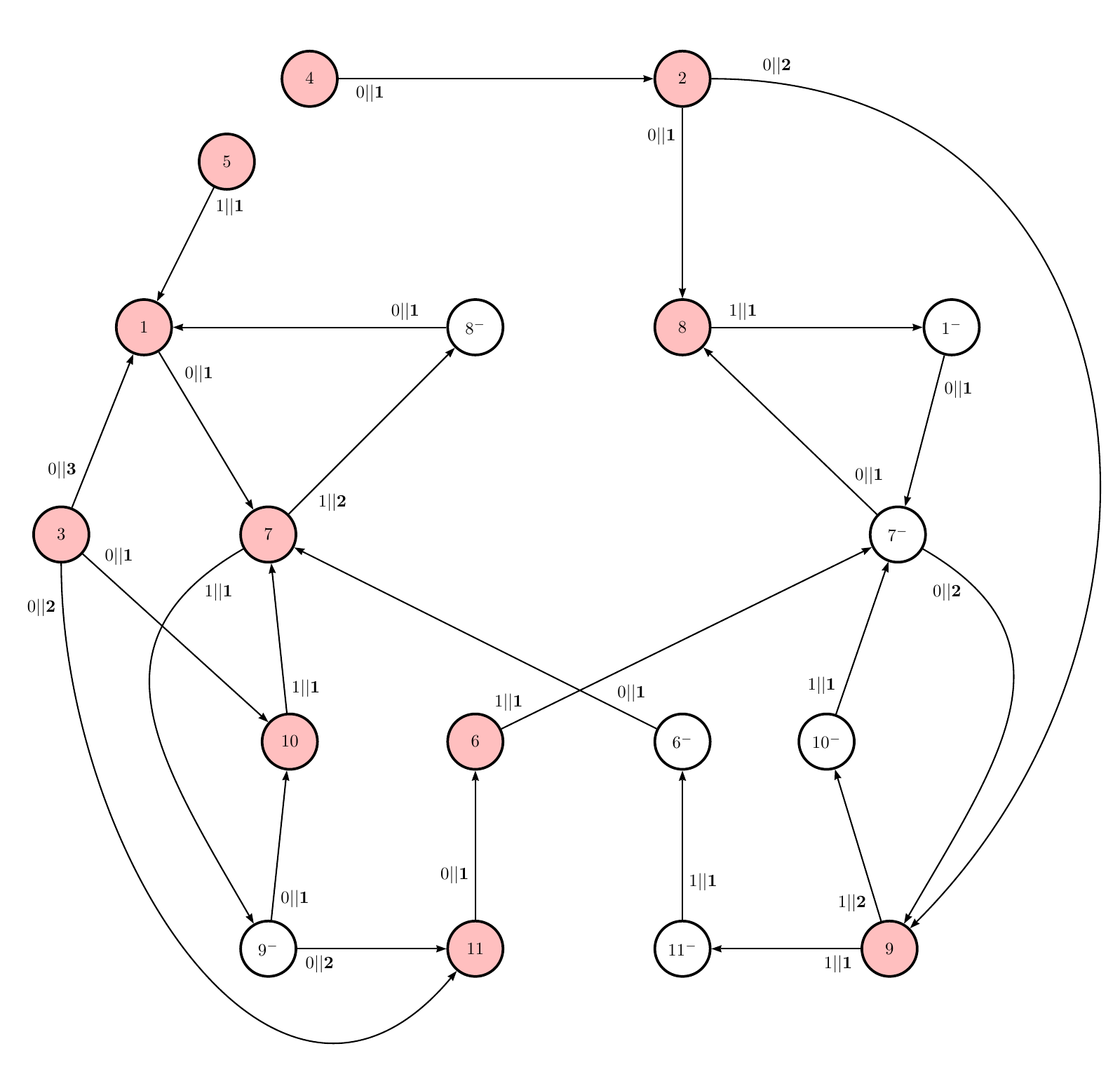}\end{center}
\caption{$G_{a,b}^+$ for $a=b=1$ (Tribonacci substitution).}\label{fig:Ga1b1}
\end{figure}

The parametrization procedure now runs along the same lines as in~\cite[Section 3]{AkiyamaLoridant11}. Let $\mathbf{L}$ be the incidence matrix of $G^+$ (or $G$) and $\mathbf{u}=(u^{(1)},\ldots,u^{(r)})$ a strictly positive eigenvector for the dominant eigenvalue $\lambda$ as in Lemma~\ref{PosEVContact}, normalized to have $u^{(1)}+\cdots+u^{(r)}=1$. The automaton $G^+$ induces a number system, also known as \emph{Dumont-Thomas numeration system}~\cite{DumontThomas89}. We map each admissible infinite walk of $G^+$ to a point in the unit interval $[0,1]$, according to the schema shown in Figure~\ref{IntervalSubdivisions}: we first subdivide $[0,1]$ in subintervals of lengths given by the coordinates of $\mathbf{u}$; we then subdivide each subinterval, for example the subinterval $[{\bf{u}}^{(1)},{\bf{u}}^{(1)}+{\bf{u}}^{(2)}]$, by using $\mathbf{L}{\bf u}=\lambda{\bf u}$; and we iterate this procedure. 
 \begin{figure}[h!]
\includegraphics[width=130mm,height=45mm]{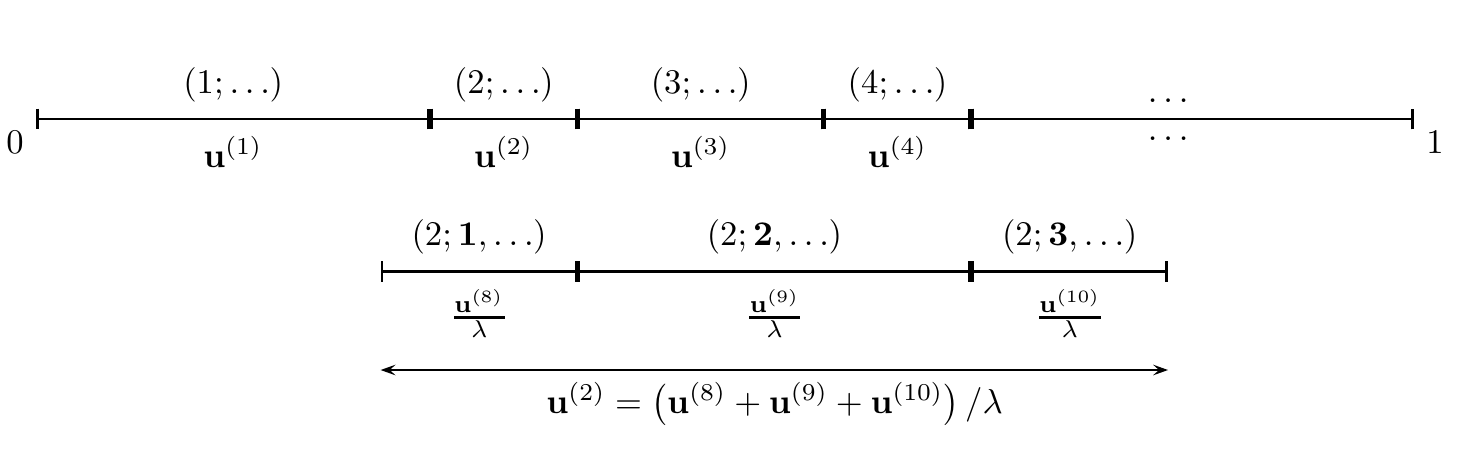}
\caption{Parametrization procedure: interval subdivisions (case $a> b$).}\label{IntervalSubdivisions}
\end{figure}
More precisely, we define a function
$$
\phi^0(S;{\bf o}) \;=\; 
\left\{\begin{array}{cc}
0, &\textrm{if }{\bf o}={\bf 1};\\
\displaystyle\sum_{\begin{array}{c}{\bf 1}\leq {\bf k}< {\bf o},\\S\xrightarrow{{\bf k}}S'\end{array}}
{\bf u}^{(S')},&\textrm{if }{\bf o}\ne {\bf 1}.
\end{array}
\right.
$$
Thus $\phi^0(S;{\bf o}) <\displaystyle\sum_{\begin{array}{c}{\bf 1}\leq {\bf k}\leq {\bf o_{\textrm{max}}},\\S\xrightarrow{{\bf k}}S'\end{array}}
{\bf u}^{(S')}=\lambda {\bf u}^{(S)}$ for each edge $(S;{\bf o})$.

We set ${\bf u}^{(0)}:=0$ and map the admissible infinite walks in $G^+$ to $[0,1]$ via
\begin{equation}\label{def:DTmap}
\begin{array}{rcl}
\phi\;:\;G^+&\xrightarrow{}&[0,1]\\\\
 w &\mapsto& \lim_{n\to\infty}
 \;(\;{\bf u}^{(0)}+{\bf u}^{(1)}+\ldots+{\bf u}^{(S-1)}
 \\&&\;\;\;+\frac{1}{\lambda}\phi^0(S;{\bf o_1})+\frac{1}{\lambda^2}\phi^0(S_1;{\bf o_2})+\ldots+\frac{1}{\lambda^{n}}\phi^0(S_{n-1};{\bf o_n})\;)
\end{array}
\end{equation}
whenever $w$ is the admissible infinite walk:
$$w:\;S\xrightarrow{{\bf o_1}}S_1\xrightarrow{{\bf o_2}}\ldots\xrightarrow{{\bf o_n}}S_{n}\xrightarrow{{\bf o_{n+1}}}\ldots
$$
This mapping $\phi$ is well-defined, increasing, onto, and it is almost 1 to 1, as identifications occur exactly on pairs of lexicographically consecutive infinite walks. Indeed, let $w\ne w'$ be admissible infinite walks in $G^+$, say for example $w >_{lex}w'$. Then $\phi(w)=\phi(w')$ iff 
\begin{equation}\label{identifpairs}
\begin{array}{lcr}
1.\left\{
\begin{array}{rcl}
w&=&(S+1;\overline{{\bf 1}})\\
w'&=&(S;\overline{{\bf o}_{\textrm{max}}})
\end{array}
\right.
&
\textrm{ or }&
2.\left\{
\begin{array}{rcl}
w&=&(S;{\bf o_1},\ldots,{\bf o_m},{\bf o+1},\overline{{\bf 1}})\\
w'&=&(S;{\bf o_1},\ldots,{\bf o_m},{\bf o},\overline{{\bf o}_{\textrm{max}}})
\end{array}
\right.
\end{array}
\end{equation}
holds for some state $S=1,\ldots,S_{\textrm{max}}$ or some prefix $(S;{\bf o_1},\ldots,{\bf o_m})$ and an order ${\bf o}$. Here, $S_{\textrm{max}}=12$ (case $a>b$) or $11$ (case $a=b$). By $\overline{{\bf o}}$, we mean the infinite repetition of the order ${\bf o}$.  We omit the proofs of these facts here, since they are similar to proofs given in~\cite{AkiyamaLoridant11}.

Consequently, if $t\in[0,1]$, then $\phi^{-1}(t)$ consists of either one or two elements. Hence an inverse of $\phi$ can be defined as
$$
\begin{array}{rcl}
\phi^{(1)}\;:\;[0,1]&\xrightarrow{} &G^+\\
 t &\mapsto& \max^{lex}{\phi^{-1}(t)},
\end{array}
$$
where $\max^{lex}$ maps a finite set of walks to its lexicographically maximal walk.

We finally denote by $P$ the natural bijection:
\begin{equation}\label{mapP}
\begin{array}{rrcl}
P:&G^+&\to&G\\
 &(S;{\bf o_1},{\bf o_2},\ldots) &\mapsto& w:\;S\xrightarrow{p_0|p_0'}S_1\xrightarrow{p_1|p_1'}\ldots
\end{array}
\end{equation}
whenever $(S;{\bf o_1},{\bf o_2},\ldots)=S\xrightarrow{p_0|p_0'||\mathbf{o_1}}S_1\xrightarrow{p_1|p_1'||\mathbf{o_2}}\ldots$ is an admissible walk of $ G^+$, and by $\psi$ the mapping
\begin{equation}\label{mapPsi}
\begin{array}{rrcl}
\psi:&G&\to&\partial \T\\
 &w= [i,\gamma,j]\xrightarrow{p_0|p_0'}[i_1,\gamma_1,j_1]\xrightarrow{p_1|p_1'}\ldots&\mapsto& \sum_{k\geq 0} \mathbf{h}^k\pi {\bf l}(p_k)
\end{array}
\end{equation} 

 This allows us to define our parametrization mapping $C$.
\begin{proposition}\label{prop:Csurj} The mapping $C:[0,1]\xrightarrow{\phi^{(1)}}G^+\xrightarrow{P}G\xrightarrow{\psi} \partial \T$ is well-defined and surjective. Furthermore, let 
$$\begin{array}{rl}
A:=\{t\in[0,1]\;;t=&{\bf u}^{(0)}+{\bf u}^{(1)}+\ldots+{\bf u}^{(S-1)}\\
&\;+\;\frac{1}{\lambda}\phi^0(S;{\bf o_1})+\frac{1}{\lambda^2}\phi^0(S_1;{\bf o_2})+\ldots+\frac{1}{\lambda^{n}}\phi^0(S_{n-1};{\bf o_n}) \\
&\textrm{ for some admissible finite walk }S\xrightarrow{p_0||{\bf o_1}}S_1\xrightarrow{p_1||{\bf o_2}}\ldots\xrightarrow{p_{n-1}||{\bf o_n}}S_n\in G^+
\}.
\end{array}
$$
Then $C$ is continuous on $[0,1]\setminus A$, and right continuous on $A$. Also, if $t\in A$, then $\lim_{t^-}C$ exists.
\end{proposition}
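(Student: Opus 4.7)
The plan is to separate the proposition into three logical pieces: well-definedness of $C$ in the target $\partial\T$, surjectivity, and the regularity (continuity plus one-sided continuity on $A$).

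For well-definedness, I would just chase the diagram. The map $\phi^{(1)}$ produces an admissible infinite walk in $G^+$ by construction (it picks the lexicographically maximal preimage out of at most two candidates, and every $t\in[0,1]$ has at least one preimage since $\phi$ is onto). The map $P$ is a bijection by definition. The key point is that $\psi$ lands in $\partial\T$: an admissible walk starts at a state $S\in R$ and, by inspection of the tables, never visits $J,K,L$; hence it is a walk of $G_0\subset \mathcal{G}_0$ starting from a vertex of $\mathcal{S}$ (since $R\subset \mathcal{S}$). Lemma~\ref{CharacBound} then guarantees that $\sum_k \hBF^k\pi\lBF(p_k)$ belongs to $\T(i)\cap(\T(j)+\gamma)\subset \partial\T$.

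For surjectivity, I combine Proposition~\ref{prop:ContactGIFS} with the unique-attractor description $C[i,\gamma,j]=\{\sum_k\hBF^k\pi\lBF(p_k);$ infinite walk in $G_0$ starting at $[i,\gamma,j]\}$. Since $\partial\T=\bigcup_{S\in R} C[S]$ and the starting states of $G^+$ are exactly the elements of $R$ (directly verified against Tables~\ref{Gab}--\ref{Ga1b1}), the image of $\psi\circ P$ restricted to admissible walks is already all of $\partial\T$. As $\phi^{(1)}$ is surjective onto admissible walks, $C$ is surjective.

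The regularity argument is the main work and rests on two separate continuity facts. Equip the set of admissible walks of $G^+$ with the natural cylinder topology: two walks are $\varepsilon$-close if they share a long enough common prefix. Because $\hBF$ is a contraction with $\|\hBF x\|\le|\alpha|\,\|x\|$ on $\mathbb{H}_c$ and the labels $\pi\lBF(p_k)$ are bounded, two walks agreeing on their first $N$ edges are sent by $\psi\circ P$ to points at distance $\le |\alpha|^N D$ for a constant $D$; hence $\psi\circ P$ is continuous in the cylinder topology. It remains to analyze $\phi^{(1)}$: the image under $\phi$ of the cylinder of admissible walks with a fixed prefix of length $N$ is an interval of length at most $\lambda^{-N}$ (because at each step a subinterval is partitioned into pieces of total length $\lambda^{-1}$ times the length of its parent, using $\mathbf{L}\mathbf{u}=\lambda\mathbf{u}$), and $\phi$ is monotone nondecreasing, with identifications described by~(\ref{identifpairs}).

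From this two cases appear. If $t\in[0,1]\setminus A$, then $\phi^{-1}(t)$ is a singleton $\{w\}$, and for any $t_n\to t$ the monotonicity of $\phi$ forces $\phi^{(1)}(t_n)$ to eventually share arbitrarily long prefixes with $w$, so $\phi^{(1)}(t_n)\to w$ in cylinder distance, and continuity of $C$ at $t$ follows. If $t\in A$, then $\phi^{-1}(t)=\{w,w'\}$ with $w>_{\mathrm{lex}}w'$ as in~(\ref{identifpairs}); in both subcases $w$ is the walk ending in $\overline{\mathbf{1}}$ (i.e. the upper walk) and $\phi^{(1)}(t)=w$. For $t_n\to t^+$ monotonicity gives $\phi^{(1)}(t_n)\to w$ in cylinder distance, proving right continuity. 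For $t_n\to t^-$ the same argument applied from below gives $\phi^{(1)}(t_n)\to w'$ in cylinder distance, so $\lim_{s\to t^-}C(s)=\psi\circ P(w')$ exists (though it may differ from $C(t)=\psi\circ P(w)$, and exactly this discrepancy is what the injectivity analysis in Section~\ref{sec:prooftheo} will have to rule out when $2b-a\le 3$). The main obstacle here is bookkeeping the two identification patterns of~(\ref{identifpairs}) at the boundaries between intervals and between consecutive starting states, but once the length bound $\lambda^{-N}$ is in place the argument is purely a matter of translating interval convergence into cylinder convergence.
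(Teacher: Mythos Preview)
Your well-definedness and regularity arguments are correct and essentially match what the paper has in mind (the paper itself only cites Hata and \cite{AkiyamaLoridant11} here, so you are supplying the details those references contain). The cylinder-topology argument for $\psi\circ P$ and the interval-length estimate for $\phi$ are exactly the right ingredients, and your case split on $t\in A$ versus $t\notin A$ is the standard way to extract right-continuity and existence of left limits.

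There is, however, a genuine gap in your surjectivity step. You write ``As $\phi^{(1)}$ is surjective onto admissible walks, $C$ is surjective'', but $\phi^{(1)}$ is \emph{not} surjective onto admissible walks: by definition it selects the lexicographically maximal element of each fibre, so every ``lower'' walk in~(\ref{identifpairs}) --- those of the form $(S;\overline{\mathbf{o}_{\max}})$ with $S<S_{\max}$ or $(S;\mathbf{o}_1,\ldots,\mathbf{o}_m,\mathbf{o},\overline{\mathbf{o}_{\max}})$ --- is missed. You therefore still owe an argument that the points $\psi(P(w'))$ attached to these countably many lower walks lie in $C([0,1])$. This is not automatic at the level of Proposition~\ref{prop:Csurj}: it would follow from Proposition~\ref{prop:PsiIdentif} (which shows $\psi(P(w'))=\psi(P(w))$ for the paired upper walk $w$), but that comes later.

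A clean repair is to downgrade the claim at this stage to ``$C([0,1])$ is dense in $\partial\T$'' (immediate, since the image of $\phi^{(1)}$ misses only countably many walks and $\psi\circ P$ is continuous), and then recover full surjectivity once $C$ is shown to be continuous in Proposition~\ref{prop:CHoelder}: a continuous image of the compact interval is compact, hence closed, and a closed dense subset of $\partial\T$ is all of $\partial\T$. This is in fact how the logical dependence runs in the referenced paper.
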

The proof relies on arguments of Hata~\cite{Hata85} and is given in~\cite[Proposition 3.4]{AkiyamaLoridant11}. Note that here the contractions,  associated with the prefixes $p\in\{\epsilon,1\ldots,\underbrace{1\cdots1}_{a}\}$, are affine mappings of the form $f_p({\bf x})=\hBF {\bf x}+\pi\lBF( p )$.

The above proposition means that discontinuities of $C$ may occur if $\psi$ does not identify walks that are ``trivially'' identified by the numeration system $\phi$ as in (\ref{identifpairs}). The following proposition insures the identifications for finitely many such pairs of walks. Because of  the graph directed self-similarity of the boundary, this will be sufficient to infer the continuity of $C$ on the whole interval $[0,1]$ (see Proposition~\ref{prop:CHoelder}). 

\begin{proposition}\label{prop:PsiIdentif} The following equalities hold.
\begin{eqnarray}
\label{Cond1}\psi(P(S;\overline{{\bf o_{\textrm{max}}}}))&=&\psi(P(S+1;\overline{{\bf 1}}))\;\;(1\leq S\leq S_{\textrm{max}}-1)\\
\label{Cond2}\psi(P(S_{\textrm{max}};\overline{{\bf o_{\textrm{max}}}}))&=&\psi(P(1;\overline{{\bf 1}}))\\
\label{Cond3}\psi(P(S;{\bf o},\overline{{\bf o_{\textrm{max}}}}))&=&\psi(P(S;{\bf o+1},\overline{{\bf1}}))\;\;(\forall \;S,\; 1\leq {\bf o}<{\bf o_{\textrm{max}}}),
\end{eqnarray}
where $S_{\textrm{max}}=12$ (case $a>b$) or $11$ (case $a=b$).
\end{proposition}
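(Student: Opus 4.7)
The plan is to verify these three identifications by direct case analysis based on the ordered graph $G^+$ given in Tables~\ref{Gab}--\ref{Ga1b1}. First, I would reduce (Cond3) to (Cond1) and (Cond2) via the graph-directed self-affine structure. For walks $w_1 = (S; {\bf o}, \overline{{\bf o_{\max}}})$ and $w_2 = (S; {\bf o+1}, \overline{{\bf 1}})$, factoring out the leading edges $S \xrightarrow{p||{\bf o}} T$ and $S \xrightarrow{p'||{\bf o+1}} T'$ gives
$$\psi(P(w_1)) = \pi\lBF(p) + \hBF\,\psi(P(T; \overline{{\bf o_{\max}}})), \qquad \psi(P(w_2)) = \pi\lBF(p') + \hBF\,\psi(P(T'; \overline{{\bf 1}})).$$
Applying (Cond1) or (Cond2) (possibly iterated if $T'$ is several states ahead of $T+1$, with the wraparound $S_{\max}+1 \equiv 1$) to rewrite $\psi(P(T; \overline{{\bf o_{\max}}})) = \psi(P(T+1; \overline{{\bf 1}}))$, the equality in (Cond3) becomes a linear identity in $\pi\lBF(\mathbb{Z}^3)$ that, term by term, is precisely the edge constraint $\hBF \gamma_1 = \gamma + \pi(\lBF(p') - \lBF(p))$ of Definition~\ref{DefBoundGraph}(ii), already hard-coded into the construction of $G^+$.

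Next, I would verify (Cond1) and (Cond2) by a finite computation. Each walk $(S; \overline{{\bf o_{\max}}})$ and $(S+1; \overline{{\bf 1}})$ is eventually periodic in $G^+$; inspection of the tables shows that it is purely periodic or enters a short cycle after a brief preperiod of length $m$. Hence
$$\psi(P(S; \overline{{\bf o_{\max}}})) = \sum_{k=0}^{m-1} \hBF^k \pi\lBF(p_k) + \hBF^m (\id - \hBF^\ell)^{-1} \sum_{k=0}^{\ell-1} \hBF^k \pi\lBF(p_{m+k}),$$
and analogously for $(S+1; \overline{{\bf 1}})$, so the required equality reduces to a finite linear check in $\mathbb{H}_c$ that can be read off the tables. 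A more uniform route is to invoke Lemma~\ref{CharacBound}: since $G \subset \mathcal{G}_0$ (Lemma~\ref{G0subsetBoundGraph}) and both walks already live in $G$, it suffices to exhibit, for each pair, an infinite walk in $\mathcal{G}_0$ starting from an appropriate triple $[i, 0, j]$ with $i < j$ (corresponding to two subtiles of $\T$ meeting at the common boundary point), whose two prefix-label streams are precisely those of $w_1$ and $w_2$; Lemma~\ref{CharacBound} then yields equality automatically.

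The main obstacle is the sheer bulk of the case analysis: four parameter regimes ($a > b \geq 2$, $a \geq 2, b = 1$, $a = b \geq 2$, $a = b = 1$), up to twelve starting states in each, and up to $3b-2$ edge orders per state. Each verification is mechanical once the tables have been inspected, because the key algebraic content---the edge relation defining $G^+$---has been engineered so as to encode exactly these boundary-adjacency identifications; the labor lies in traversing the list systematically. The cleanest packaging is the Lemma~\ref{CharacBound} route, which trades detailed label arithmetic for the combinatorial task of matching starting triples and label sequences against the boundary graph $\mathcal{G}_0$.
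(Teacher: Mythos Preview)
Your reduction of (\ref{Cond3}) to (\ref{Cond1})--(\ref{Cond2}) in the first paragraph does not work as stated. After factoring out the first step you arrive at
\[
\pi\lBF(p)+\hBF\,\psi(P(T;\overline{\mathbf{o_{\max}}}))
\;=\;
\pi\lBF(p')+\hBF\,\psi(P(T';\overline{\mathbf{1}})),
\]
and (\ref{Cond1}) only rewrites $\psi(P(T;\overline{\mathbf{o_{\max}}}))=\psi(P(T{+}1;\overline{\mathbf{1}}))$. But in general $T'\neq T{+}1$ and $p\neq p'$: for instance, at state $3=A$ (Table~\ref{Gab}), orders $\mathbf{4{+}2k}$ and $\mathbf{5{+}2k}$ go to $T=2$ with prefix $k$ and to $T'=1$ with prefix $k{+}1$. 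You cannot chain (\ref{Cond1}) to get from state $2$ back to state $1$, because $(S{+}1;\overline{\mathbf{1}})$ and $(S{+}1;\overline{\mathbf{o_{\max}}})$ are different walks. Your appeal to the edge constraint $\hBF\gamma_1=\gamma+\pi(\lBF(p')-\lBF(p))$ is a confusion: there $p|p'$ are the two labels on a \emph{single} edge of $G_0$ (the prefix and the neighbour's prefix), not the first labels of two \emph{different} edges from $S$ with consecutive orders. The ordering of edges in $G^+$ was chosen so that (\ref{Cond3}) holds, but this is an additional fact requiring verification, not a consequence of the $G_0$-structure.

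Your second and third paragraphs are essentially the paper's actual proof: direct case analysis, computing both eventually periodic prefix sequences and invoking Lemma~\ref{CharacBound} when they differ. One correction to your description of that route: the paper does not exhibit a single walk in $\mathcal{G}_0$ from a triple $[i,0,j]$ whose two label streams match $w_1$ and $w_2$. Rather, it exhibits a \emph{pair} of walks in $G_0\subset\mathcal{G}_0$, starting from $[i,\gamma,j]$ and $[i',\gamma,j]$ with a common $\gamma\neq 0$ and a common second-label stream $(p'_k)$, whose first-label streams are those of $w_1$ and $w_2$; two applications of Lemma~\ref{CharacBound} then give
\[
\sum_{k\geq 0}\hBF^k\pi\lBF(p_k)=\gamma+\sum_{k\geq 0}\hBF^k\pi\lBF(p'_k)=\sum_{k\geq 0}\hBF^k\pi\lBF(p''_k).
\]
The paper carries this out for all three conditions and all states in Appendix~\ref{app:Continuity}; there is no shortcut avoiding (\ref{Cond3}).
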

\begin{proof} We check~(\ref{Cond1}) in the following way. Let us consider the case $a\geq b+1,b\geq2$. We refer to Table~\ref{Gab} and Figure~\ref{fig:Gab}. We read for $S=1$
$$\begin{array}{rclcrcl}
P(1;\overline{{\bf o_{\textrm{max}}}})&=&\overline{1\xrightarrow{0}7\xrightarrow{a}8^{-}\xrightarrow{a-b}1}&\;\textrm{and }&
P(2;\overline{{\bf 1}})&=&2\xrightarrow{0}\overline{8\xrightarrow{a}1^{-}\xrightarrow{a-b}7^{-}\xrightarrow{0}8}
\end{array}
$$ 
(infinite cycles). Since the infinite sequences of prefixes are both equal to the periodic sequence $\overline{0a(a-b)}$, it follows from the definition of $\psi$ (see~(\ref{mapPsi})) that the equality $\psi(P(1;\overline{{\bf o_{\textrm{max}}}}))=\psi(P(2;\overline{{\bf 1}}))$ is trivially satisfied. 

The same happens for the other values of $S$, apart from $S=5,8,9,11$. For example, we read for $S=5$
$$\begin{array}{rcl}
P(5;\overline{{\bf o_{\textrm{max}}}})&=&5\xrightarrow{a-1}\overline{7^{-}\xrightarrow{0}10\xrightarrow{b}11^{-}\xrightarrow{a}7^-}\\
&\textrm{and }\\
P(6;\overline{{\bf 1}})&=&6\xrightarrow{a}6^-\xrightarrow{b-1}\overline{7\xrightarrow{a}10^{-}\xrightarrow{0}11\xrightarrow{b}7}.
\end{array}
$$ 
Note that this does not exclude the case $b=2$ (we then simply have ${\bf o_{\textrm{max}}}={\bf 1}$ for the state $5$). Therefore we read the infinite sequence of prefixes 
$$(a-1)\overline{0ba}\;\;\textrm{ and }\;\;a(b-1)\overline{a0b}.
$$
To prove that these sequences lead to the same boundary point, we use Lemma~\ref{CharacBound}. Indeed, there exists the pair of following infinite walks in $G_0\subset\mathcal{G}_0$:
$$\left\{\begin{array}{c}5\xrightarrow{p_0=a-1|p'_0=b-2}\overline{7^{-}\xrightarrow{p_1=0|p_1'=a}10\xrightarrow{p_2=b|p_2'=0}11^{-}\xrightarrow{p_3=a|p_3'=b}7^-}\\\\
6\xrightarrow{p_0''=a|p_0'=b-2}6^-\xrightarrow{p_1''=b-1|p_1'=a}\overline{7\xrightarrow{p_2''=a|p_2'=0}10^{-}\xrightarrow{p_3''=0|p_3'=b}11\xrightarrow{p_4''=b|p_4'=a}7},
\end{array}\right.
$$
and $5=[1,\pi(1,-1,1),1],\;6=[2,\pi(1,-1,1),1]$. Consequently, by the mentioned lemma, we have
$$\sum_{k\geq 0}{\bf h}^k\pi {\bf l}(p_k)=\pi(1,-1,1)+\sum_{k\geq 0}{\bf h}^k\pi {\bf l}(p_k')=\sum_{k\geq 0}{\bf h}^k\pi {\bf l}(p_k'').
$$

Conditions~(\ref{Cond2}) and (\ref{Cond3}) are checked similarly. The computations are carried out in Appendix~\ref{app:Continuity}. See also this appendix for the remaining values of $a,b$. 
\end{proof}
In the appendix and in the rest of this section, we use the following notation for the concatenation of walks: 
\begin{equation}\label{def:concatenation}
(S;{\bf o_1},\ldots,{\bf o_n})\& (S';{\bf o_{n+1}},\ldots):=(S,{\bf o_1},\ldots,{\bf o_n},{\bf o_{n+1}},\ldots),
\end{equation} 
where $S'$ is the ending state of the walk $(S;{\bf o_1},\ldots,{\bf o_n})$ of $G^+$.

\begin{proposition}\label{prop:CHoelder}
The mapping $C$ satisfies $C(0)=C(1)$ and it is H\"older continuous with exponent 
$s=-\frac{\log|\alpha|}{\log\lambda}$, where $|\alpha|=\max\{|\alpha_1|,|\alpha_2|\}$ ($\alpha_1,\alpha_2$ are the conjugates of the Pisot number $\beta$).
\end{proposition}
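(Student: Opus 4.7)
The plan is to treat the three assertions in turn: (a) $C(0)=C(1)$, (b) continuity of $C$ on all of $[0,1]$, and (c) the Hölder bound with exponent $s=-\log|\alpha|/\log\lambda$.

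Claim (a) is immediate from the construction of $C$: the walks $\phi^{(1)}(0)=(1;\overline{{\bf 1}})$ and $\phi^{(1)}(1)=(S_{\textrm{max}};\overline{{\bf o_{\textrm{max}}}})$ are sent by $\psi\circ P$ to the same boundary point by identity~(\ref{Cond2}) of Proposition~\ref{prop:PsiIdentif}.

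For claim (b), Proposition~\ref{prop:Csurj} shows that the only possible discontinuities of $C$ occur at points $t\in A$, where $\phi^{-1}(t)$ consists of two walks $w>_{lex}w'$ of one of the two shapes listed in~(\ref{identifpairs}). The first shape is exactly handled by identity~(\ref{Cond1}). For the second shape, the two walks share a finite prefix $(S;{\bf o_1},\ldots,{\bf o_m})$ ending at a state $S_m$ with corresponding prefix letters $p_0,\ldots,p_{m-1}$; factoring out the common contractions in the definition~(\ref{mapPsi}) of $\psi$ reduces the desired equality to
\begin{equation*}
\psi(P(S_m;{\bf o+1},\overline{{\bf 1}}))=\psi(P(S_m;{\bf o},\overline{{\bf o_{\textrm{max}}}})),
\end{equation*}
which is precisely identity~(\ref{Cond3}). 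Hence $C$ is continuous on the whole interval $[0,1]$.

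For claim (c), I would exploit the graph-directed self-similar structure. An admissible length-$n$ prefix $S\xrightarrow{{\bf o_1}}\cdots\xrightarrow{{\bf o_n}}S_n$ with prefix letters $p_0,\ldots,p_{n-1}$ determines a level-$n$ Dumont--Thomas cylinder of length $\lambda^{-n}{\bf u}^{(S_n)}$, and its $C$-image lies in $\sum_{k=0}^{n-1}\hBF^k\pi\lBF(p_k)+\hBF^n C[S_n]$, a translate whose diameter is at most $|\alpha|^n D$ with $D=\max_S\textrm{diam}\,C[S]<\infty$. Given $t,t'\in[0,1]$ with $t\ne t'$, take $n$ maximal such that the level-$n$ cylinders containing $t$ and $t'$ are either equal or adjacent; continuity from (b) then forces $|C(t)-C(t')|\leq 2D|\alpha|^n$. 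Since the coordinates of $\mathbf{u}$ are bounded below by some $u_{\min}>0$, maximality of $n$ produces an intermediate level-$(n+1)$ cylinder between $t$ and $t'$ and hence $|t-t'|\geq(u_{\min}/\lambda)\lambda^{-n}$. Rewriting $|\alpha|^n=(\lambda^{-n})^s$ and combining these two estimates gives $|C(t)-C(t')|\leq K|t-t'|^s$ with a constant $K$ depending only on $D$, $u_{\min}$ and $\lambda$.

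The genuine obstacle is concentrated in step (b), namely showing that the three ``root-level'' identities of Proposition~\ref{prop:PsiIdentif} suffice to identify \emph{every} pair of walks that $\phi$ merges; the graph-directed self-similarity of both the Dumont--Thomas numeration and the GIFS~(\ref{ContactGIFS}) is what allows this finite list of identifications to propagate to arbitrary depth. Once this propagation is secured, the Hölder estimate in (c) follows from the clean interplay between the expansion factor $\lambda$ of the incidence matrix $\mathbf{L}$ and the contraction factor $|\alpha|$ of $\hBF$.
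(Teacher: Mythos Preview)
Your proof is correct and follows essentially the same approach as the paper. The paper's own argument for (b) is identical in structure---reduce to left-continuity on $A$ via Proposition~\ref{prop:Csurj}, handle the length-one case by~(\ref{Cond1}) and~(\ref{Cond3}), then factor out the common prefix to propagate~(\ref{Cond3}) to arbitrary depth---while for (c) the paper simply cites~\cite[Proposition~3.5]{AkiyamaLoridant11}, whose content is precisely the cylinder argument you sketch (Hata's estimate). Your attribution of $C(0)=C(1)$ to~(\ref{Cond2}) is the correct one; the paper's text refers to~(\ref{Cond3}) here, which appears to be a typo.
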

\begin{proof}
The proof mainly relies on an argument of Hata~\cite{Hata85}. By Proposition~\ref{prop:Csurj}, we just need to check the left-continuity of $C$ on the countable set $A$.  This will result from Proposition~\ref{prop:PsiIdentif}. First note that (\ref{Cond3}) means that $C(0)=C(1)$. Also, (\ref{Cond1}) and (\ref{Cond3}) mean that $C$ is left continuous at the points associated to walks of length $n=1$ in the definition of $A$. We now prove that this is sufficient for $C$ to be continuous on the whole set $A$. This follows from the definition of $\psi$. Indeed, let $t\in A$ associated to a walk of length $n\geq 2$ but not to a walk of smaller length. Thus
$$t=\phi(\underbrace{S;{\bf o_1},\ldots,{\bf o_n},\overline{{\bf 1}}}_{w})
$$ 
with ${\bf o_n}\ne {\bf 1}$.  We write $(p_0,p_1,\ldots)$ for the labeling sequence of $P(w)$.  Then,
$$\begin{array}{rcl}
C(t)&=&\psi(P(w))=\sum_{k=0}^{n-1}\hBF^k\pi\lBF(p_k)+\hBF^n\psi(P(S';{\bf o_n},\overline{{\bf1}}))\\
&=&
\sum_{k=0}^{n-1}\hBF^k\pi\lBF(p_k)+\hBF^n\psi(P(S';{\bf o_n-1},\overline{{\bf o_{\textrm{max}}}}))\;\;\textrm{(by Condition~(\ref{Cond3}))}\\
&=&C(t^-)
\end{array}
$$
(here $S'$ is the ending state of the finite admissible walk $w|_{n-1}$ in the automaton $G_0^+$). Thus $C$ is left continuous in $t$.

More details as well as the proof of the H\"older continuity can be found in~\cite[Proposition 3.5]{AkiyamaLoridant11}. The exponent is $-\frac{\log|\delta|}{\log\lambda}$,  where $\delta$ is the maximal contraction factor among all the contractions in the GIFS, \emph{i.e.}, the maximal contraction factor of the mapping $\mathbf{h}$ (see Subsection~\ref{subsec:RF}). This is exactly $|\alpha|$. 
\end{proof}

\begin{remark} If $|\alpha_1|=|\alpha_2|=|\alpha|$, \emph{i.e.}, if the contraction $\mathbf{h}$ is a similarity, then $\beta|\alpha^2|=1$, therefore the above H\"older exponent is related to the Hausdorff dimension of the boundary: 
$$s=\frac{1}{\textrm{dim}_H(\partial \T)}
$$
(see~\cite[Theorem 6.7]{Thuswaldner06} or~\cite[Theorem 4.4]{SiegelThuswaldner10}). As mentioned in~\cite[Section 6.4]{Thuswaldner06}, this is the case as soon as $D=\frac{1}{108}\left(27-4b^3+18ab-a^2b^2+4a^3\right)\geq0$. 
\end{remark}

We now give the sequence of polygonal closed curves $\Delta_n$ converging to $\partial \T$ with respect to the Hausdorff metric. For $N$ points $M_1,\ldots,M_N$ of $\mathbb{R}^2$, we denote by $\left[M_1,\ldots,M_N\right]$ the curve joining $M_1,\ldots,M_N$ in this order by straight lines. 

\begin{definition}\label{def:polygapprox} Let $w_1^{(n)},\ldots,w_{N_n}^{(n)}$ be the walks of length $n$ in the graph $G^+$, written in the lexicographical order: 
$$(1;{\bf 1},\ldots,{\bf 1})=w^{(n)}_1\leq_{lex}w^{(n)}_2\leq_{lex}\ldots\leq_{lex}w^{(n)}_{N_n}=(S_{\textrm{max}};{\bf o_{\textrm{max}}},\ldots,{\bf o_{\textrm{max}}}),
$$
 where $N_n$ is the number of these walks. For $n=0$, these are just the states $1,\ldots,S_{\textrm{max}}$. Let 
$$C_j^{(n)}:=C(\phi(w^{(n)}_j\&\overline{{\bf1}}))\;\in\partial T\;\; (1\leq j\leq N_n).
$$
Then we call 
$$\Delta_n:=\left[C_1^{(n)},C_2^{(n)},\ldots,C_{N_n}^{(n)},C_1^{(n)}\right],
$$
the $n$\emph{-th approximation of} $\partial \T_n$. 
\end{definition}
The first terms of the sequences $(\Delta_n)_{n\geq 0}$ are depicted for $a=b=1$ in Figure~\ref{BoundaTribo} and for $a=10,b=7$ in Figure~\ref{Bounda10b7}.
\begin{proposition}\label{prop:convapprox}$\Delta_n$ is a polygonal closed curve and its vertices have $\mathbb{Q}(\lambda)$-addresses in the parametrization $C$. Moreover, $(\Delta_n)_{n\geq 0}$ converges to $\partial \T$ in Hausdorff metric.
\end{proposition}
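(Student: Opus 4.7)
The plan is to verify the three claims of the proposition in the order stated: that $\Delta_n$ is a closed polygonal curve, that its vertices are the images under $C$ of parameters lying in $\mathbb{Q}(\lambda)\cap[0,1]$, and that $\Delta_n\to\partial\T$ in Hausdorff metric. The first claim is immediate from Definition~\ref{def:polygapprox}: by construction $\Delta_n$ is the concatenation of the finitely many straight segments $[C_j^{(n)},C_{j+1}^{(n)}]$ together with the closing segment $[C_{N_n}^{(n)},C_1^{(n)}]$, each $C_j^{(n)}$ lying on $\partial\T$ since $C([0,1])=\partial\T$.

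For the second claim, I would inspect the definition~(\ref{def:DTmap}) of $\phi$. Plugging in the walk $w_j^{(n)}\&\overline{\mathbf{1}}$, the infinite tail $\overline{\mathbf{1}}$ contributes nothing because $\phi^0(S;\mathbf{1})=0$ for every state $S$. Therefore
\[
\phi(w_j^{(n)}\&\overline{\mathbf{1}}) \;=\; \mathbf{u}^{(0)}+\cdots+\mathbf{u}^{(S-1)} \;+\; \sum_{k=1}^{n}\lambda^{-k}\phi^0(S_{k-1};\mathbf{o}_k)
\]
is a finite $\mathbb{Z}$-linear combination of the numbers $\lambda^{-k}\mathbf{u}^{(S')}$. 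By Lemma~\ref{PosEVContact} the vector $\mathbf{u}$ is the normalized Perron--Frobenius eigenvector of the integer matrix $\mathbf{L}$ for the algebraic number $\lambda$, hence its entries all lie in $\mathbb{Q}(\lambda)$. Consequently $t_j^{(n)}:=\phi(w_j^{(n)}\&\overline{\mathbf{1}})\in\mathbb{Q}(\lambda)\cap[0,1]$ and $C_j^{(n)}=C(t_j^{(n)})$, as required. Since every walk of length $n$ extends uniquely to a walk of length $n+1$ by appending ${\bf 1}$ (which is the first prefix chosen at the next step), one obtains $V_n\subset V_{n+1}$ as asserted in Theorem~\ref{ParamTheo}.

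For the third claim the key ingredient is the geometric shrinking of the Dumont--Thomas subintervals. By the very construction of $\phi$, the set of admissible infinite walks sharing a common prefix $(S;\mathbf{o}_1,\ldots,\mathbf{o}_n)$ ending in a state $S_n$ is mapped by $\phi$ onto a subinterval of $[0,1]$ of length precisely $\lambda^{-n}\mathbf{u}^{(S_n)}$. Because walks are lexicographically ordered and $\phi$ is monotone, consecutive walks $w_j^{(n)},w_{j+1}^{(n)}$ produce adjacent such subintervals, so
\[
\max_{j}\,(t_{j+1}^{(n)}-t_j^{(n)}) \;\leq\; \lambda^{-n}\,\max_{S}\mathbf{u}^{(S)} \;\xrightarrow[n\to\infty]{}\;0.
\]
Combining this with the H\"older continuity provided by Proposition~\ref{prop:CHoelder}, with constant $K$ and exponent $s$, I would conclude both directions of Hausdorff convergence as follows. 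For any $x$ lying on the segment $[C_j^{(n)},C_{j+1}^{(n)}]$ one has $d(x,\partial\T)\leq \|C_{j+1}^{(n)}-C_j^{(n)}\|\leq K(t_{j+1}^{(n)}-t_j^{(n)})^s$. Conversely, any $y=C(t)\in\partial\T$ satisfies $t_j^{(n)}\leq t\leq t_{j+1}^{(n)}$ for some $j$, so $d(y,\Delta_n)\leq \|C(t)-C_j^{(n)}\|\leq K(t_{j+1}^{(n)}-t_j^{(n)})^s$. Both bounds are uniform in $j$ and $t$ and tend to $0$, yielding $\Delta_n\to\partial\T$ in Hausdorff metric.

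The work is essentially organized by the tools already in place, so there is no serious obstacle: the delicate points are (a) confirming that $\phi^0(S;\mathbf{1})=0$ so that $\overline{\mathbf{1}}$-tails terminate the expansion and keep us inside $\mathbb{Q}(\lambda)$, and (b) identifying the length of the subinterval associated with a prefix, which follows from $\mathbf{L}\mathbf{u}=\lambda\mathbf{u}$ by a short induction on the length of the prefix. Once these two bookkeeping lemmas are in hand, H\"older continuity of $C$ closes the argument.
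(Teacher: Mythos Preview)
Your argument is correct, and for the first two claims it matches the paper almost verbatim: closedness is read off from Definition~\ref{def:polygapprox}, and the $\mathbb{Q}(\lambda)$-addresses come from the termination $\phi^0(S;\mathbf{1})=0$ together with the fact that the Perron eigenvector $\mathbf{u}$ of the integer matrix $\mathbf{L}$ has entries in $\mathbb{Q}(\lambda)$.

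For the Hausdorff convergence, however, you take a different route from the paper. The paper argues structurally: it observes that $\Delta_{n+1}$ is obtained from $\Delta_n$ by applying the affine GIFS~(\ref{ContactGIFS}), so the sequence $(\Delta_n)$ is precisely the iteration of the Hutchinson operator on a starting compact set and therefore converges to the unique attractor $\partial\T$. Your argument instead stays on the parametrization side: you bound the mesh $\max_j(t_{j+1}^{(n)}-t_j^{(n)})\le\lambda^{-n}\max_S\mathbf{u}^{(S)}$ via $\mathbf{L}\mathbf{u}=\lambda\mathbf{u}$, and then push this through the H\"older estimate of Proposition~\ref{prop:CHoelder} to control both directions of the Hausdorff distance. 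Your approach is more elementary and self-contained (it uses only what has just been proved about $C$), while the paper's approach highlights the self-similar structure and connects directly to the Dekking recurrent-set picture of Remark~\ref{rem:DekMeth}; it also does not require the H\"older bound, only the contraction property of $\hBF$.
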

\begin{proof}
By definition, $\Delta_n$ is a polygonal closed curve with vertices on $\partial \T$. The vertices have $\mathbb{Q}(\lambda)$-addresses in the parametrization, since they correspond to parameters $t\in A$, where $A$ is the countable set defined in Proposition~\ref{prop:Csurj}. Note that  ${\bf u}$, defined in Lemma~\ref{PosEVContact}, is the dominant eigenvector of a non-negative matrix with dominant eigenvalue $\lambda$, hence its components belong to $\mathbb{Q}(\lambda)$. Finally, one can check that $\Delta_{n+1}$ is obtained from $\Delta_n$ after applying the GIFS~(\ref{ContactGIFS}). This is due to the fact that the contractions are affine mappings. Therefore, $(\Delta_n)_{n\geq0}$ converges in Hausdorff metric to the unique attractor, which is $\partial \T$. Details can be found in~\cite[Section 3]{AkiyamaLoridant11}.
\end{proof}
\begin{proof}[Proof of Theorem~\ref{ParamTheo}]
Theorem~\ref{ParamTheo} is now a consequence of Lemma~\ref{PosEVContact}, Proposition~\ref{prop:CHoelder} and Proposition~\ref{prop:convapprox}. 
\end{proof}

\begin{figure}
\begin{tabular}{cccc}
\includegraphics[width=40mm,height=45mm]{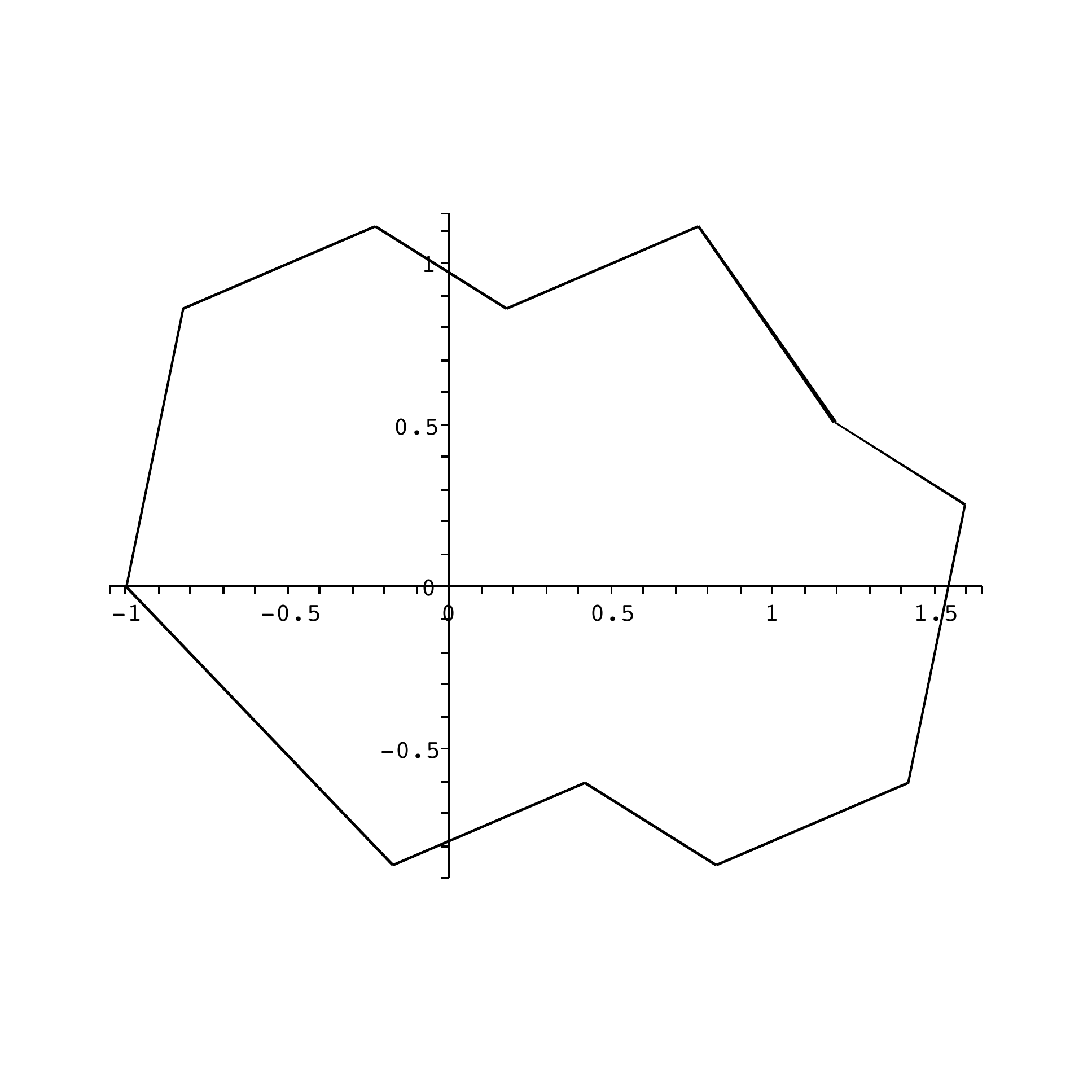}&\includegraphics[width=40mm,height=45mm]{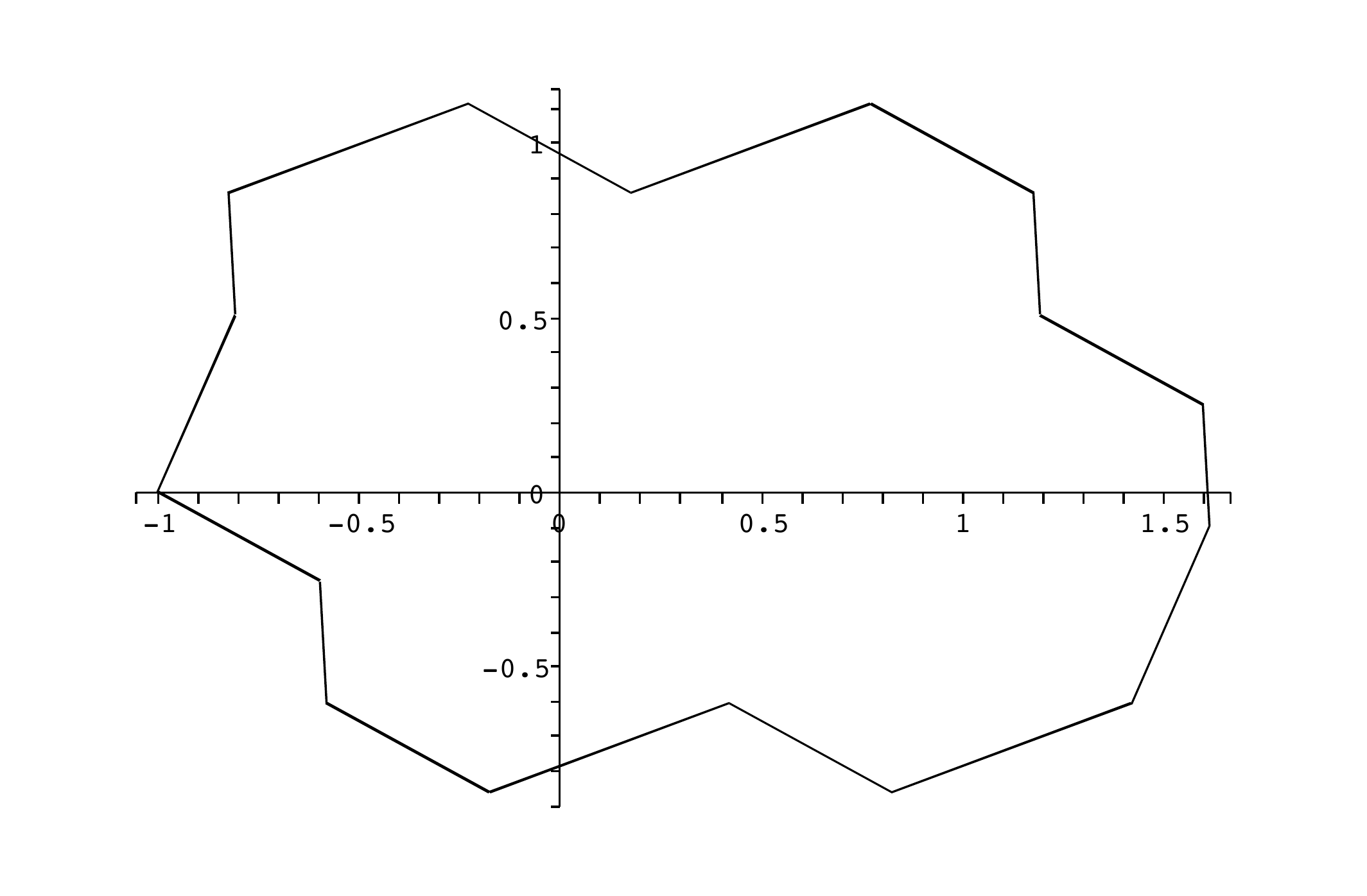}&
\includegraphics[width=40mm,height=45mm]{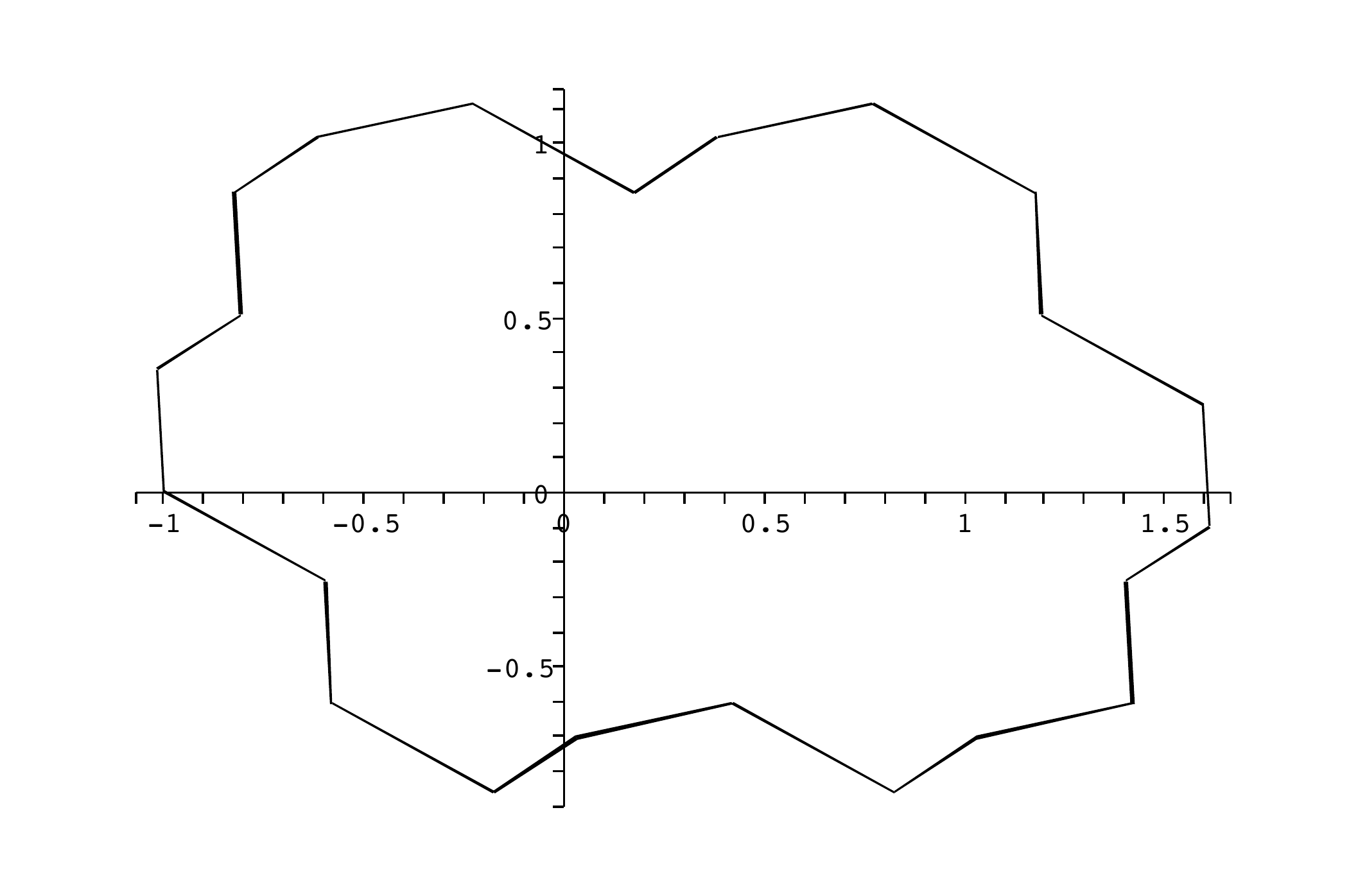}&\includegraphics[width=40mm,height=45mm]{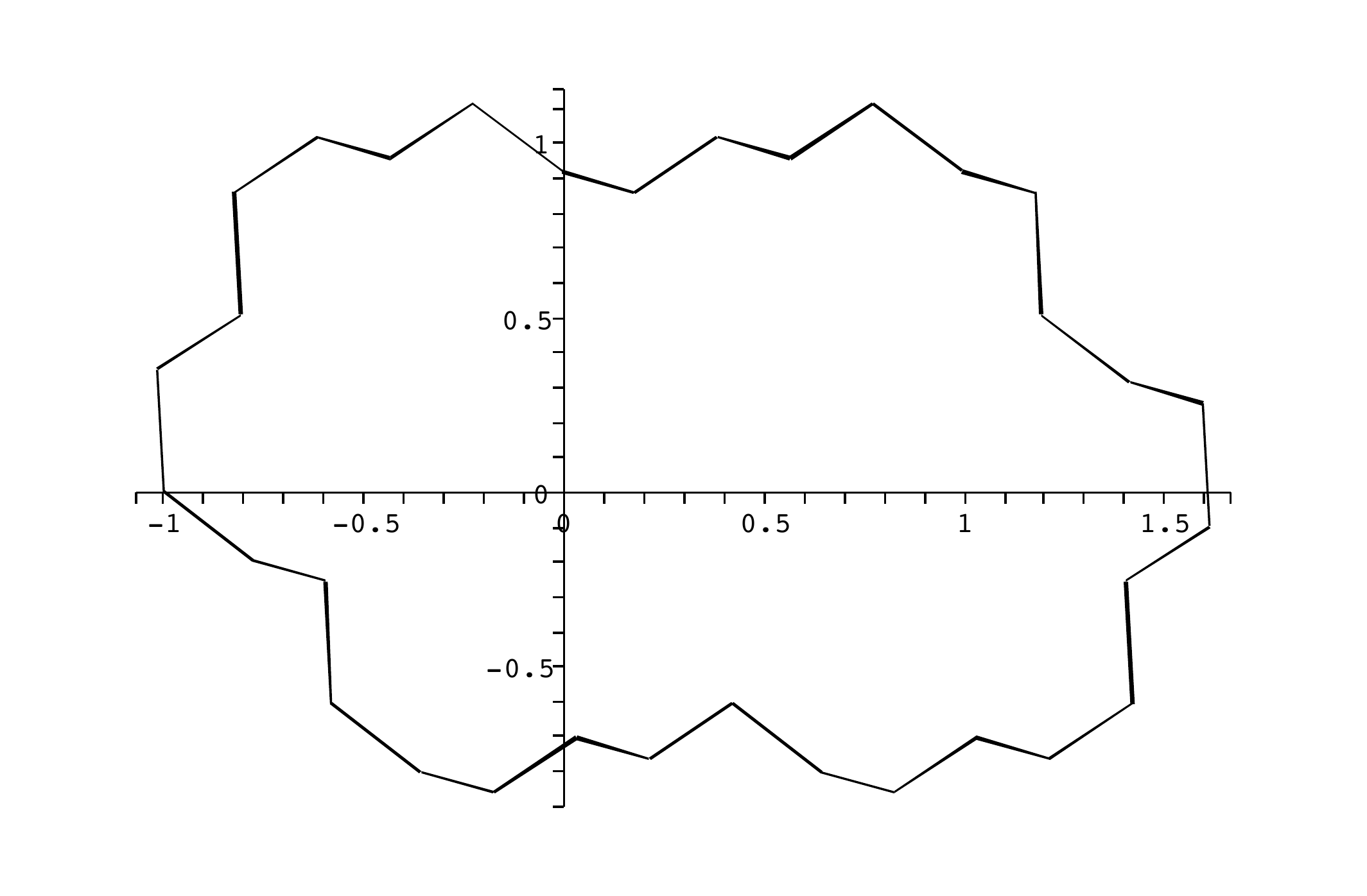}
\\
\includegraphics[width=40mm,height=45mm]{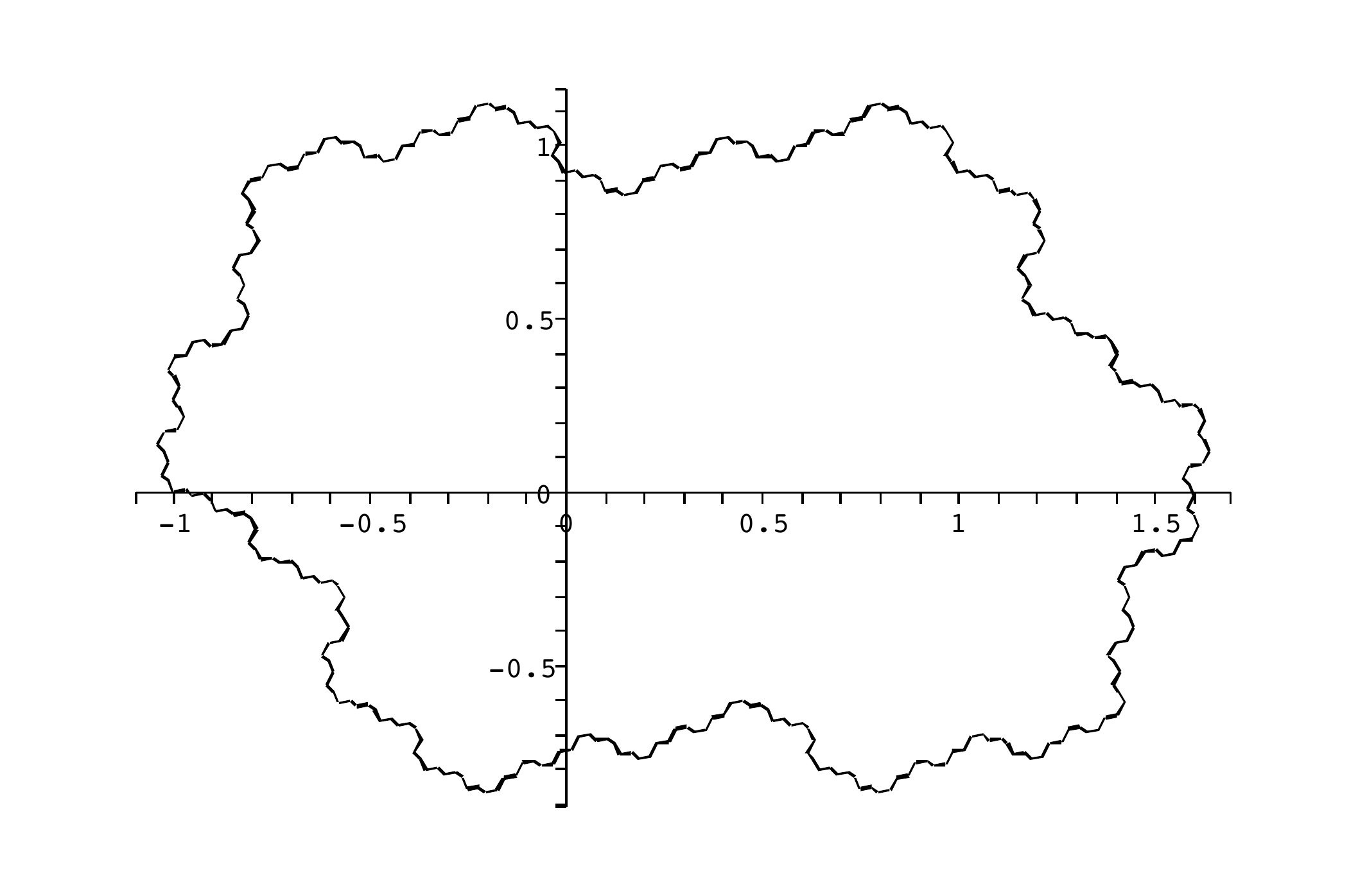}&\includegraphics[width=40mm,height=45mm]{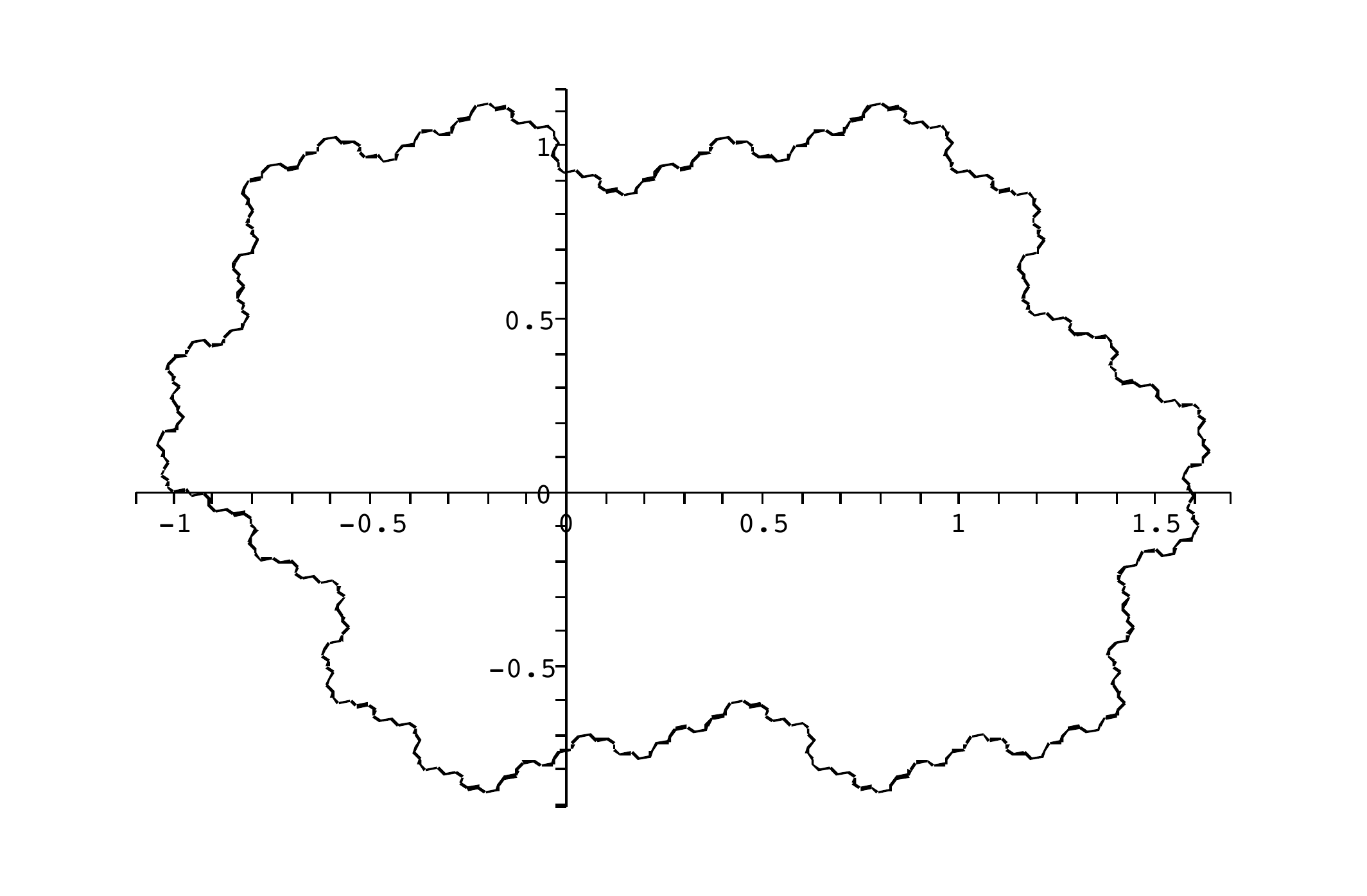}&
\includegraphics[width=40mm,height=45mm]{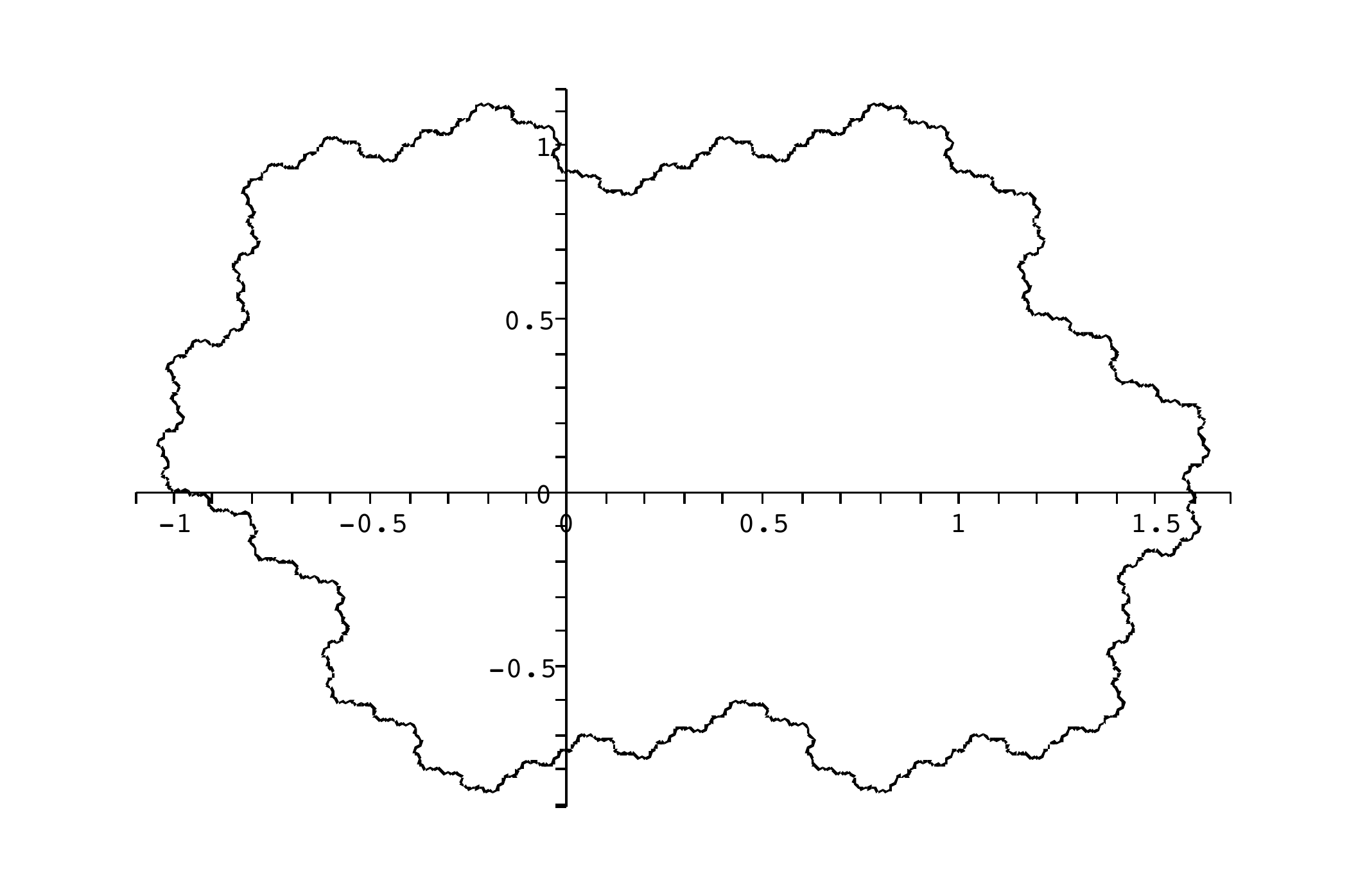}&\includegraphics[width=40mm,height=45mm]{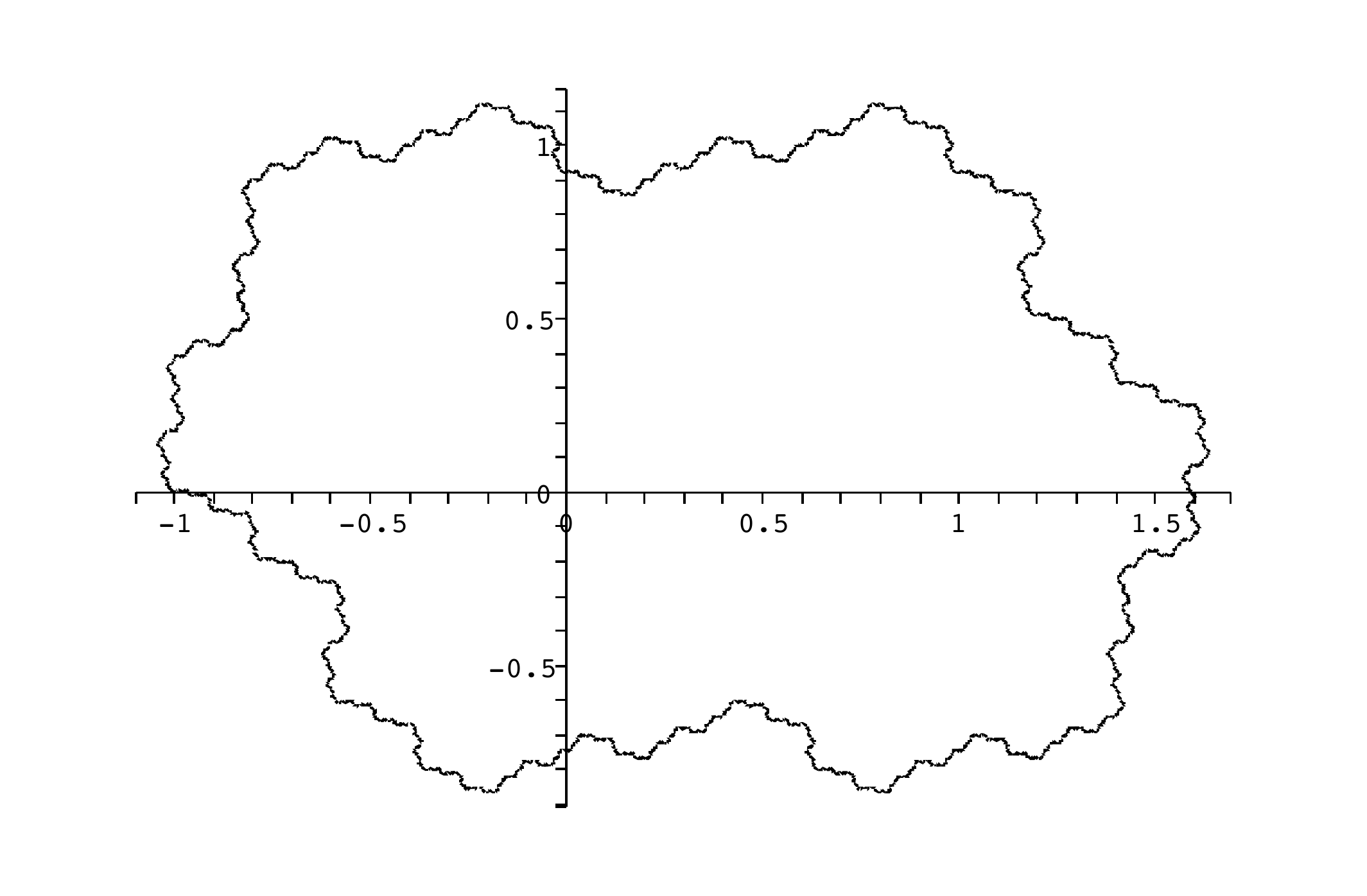}
\end{tabular}
\caption{Boundary approximations for the Rauzy fractal $\T_{1,1}$}\label{BoundaTribo}
\end{figure}

\begin{figure}
\begin{tabular}{cccc}
\includegraphics[width=40mm,height=45mm]{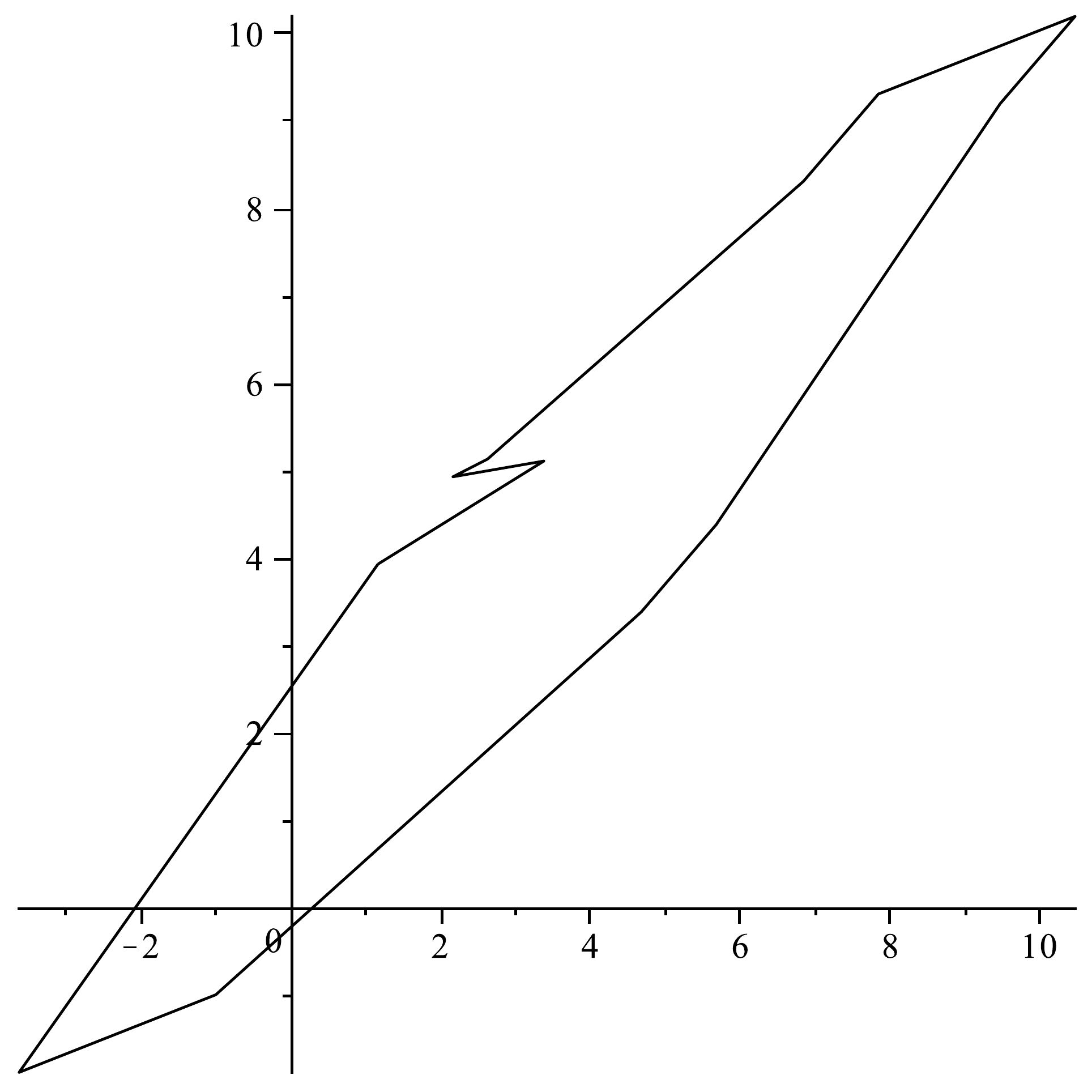}&\includegraphics[width=40mm,height=45mm]{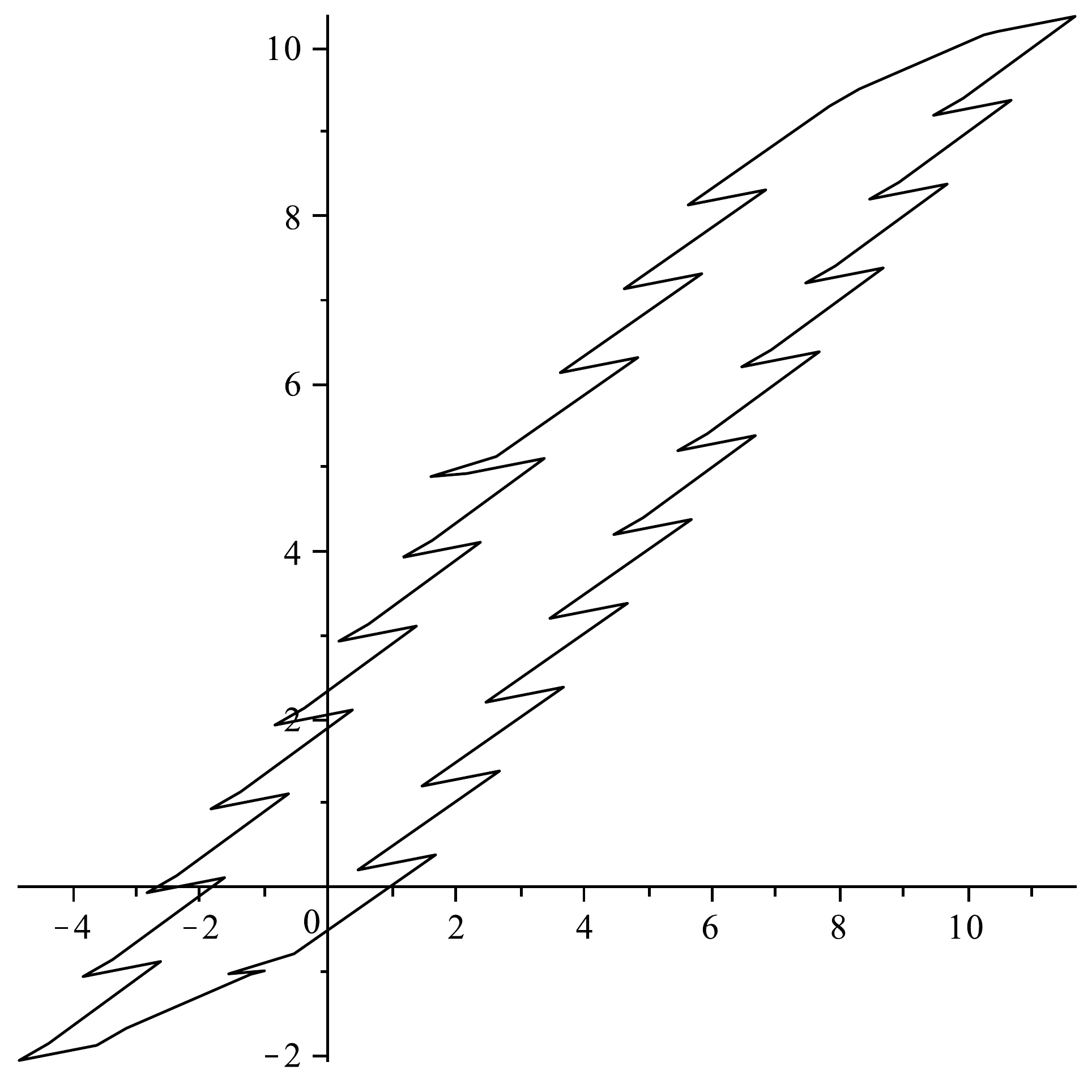}&
\includegraphics[width=40mm,height=45mm]{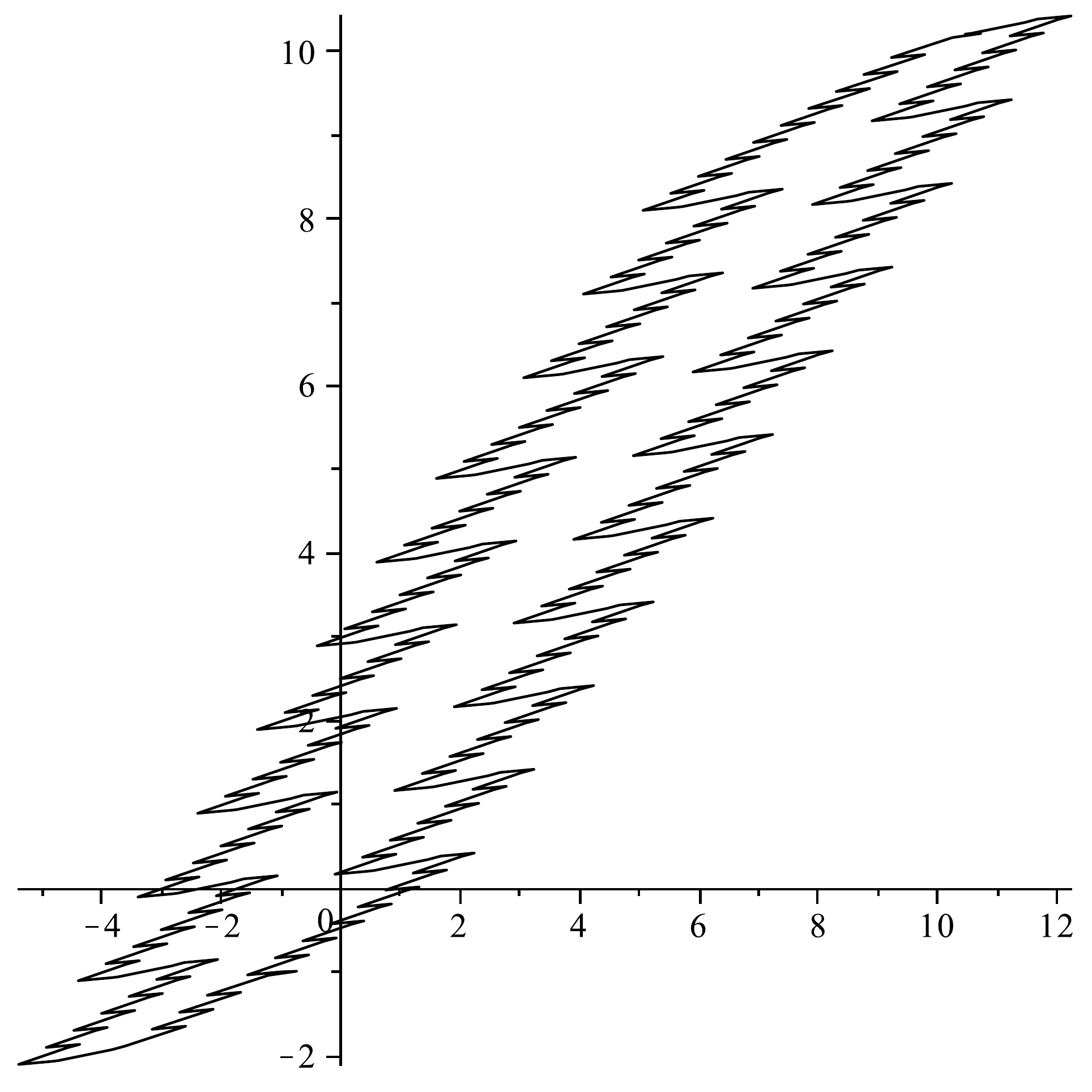}&\includegraphics[width=40mm,height=45mm]{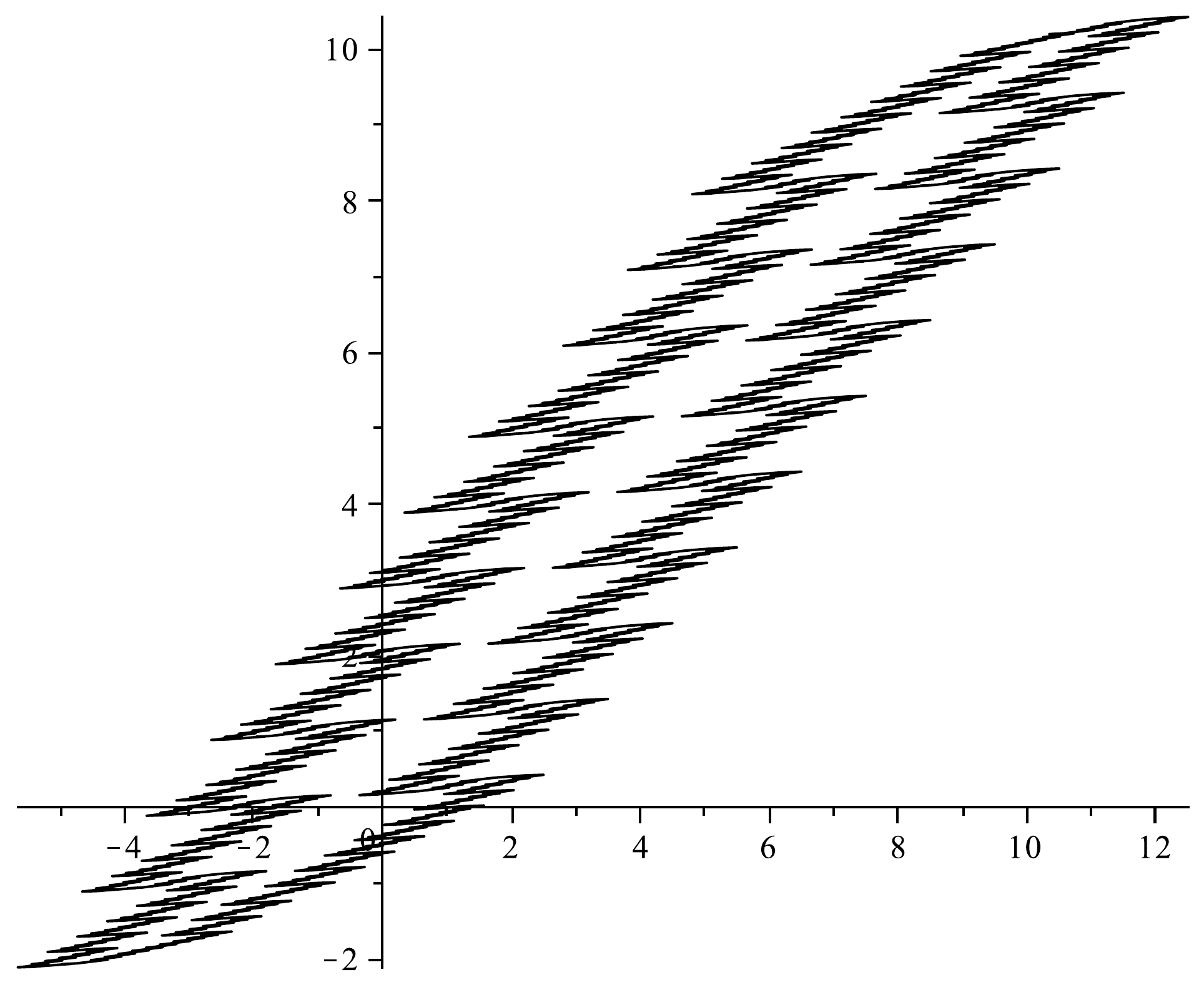}
\end{tabular}
\caption{Boundary approximations for $\T_{10,7}$}\label{Bounda10b7}
\end{figure}

\begin{remark}\label{rem:DekMeth}The way of generation of the approximations $\Delta_n$ is analogous to Dekking's recurrent set method~\cite{Dekking82b,Dekking82a}. Consider for example the Tribonacci case. The ordered automaton $G^+$ on Figure~\ref{fig:Ga1b1} gives rise to a free group endomorphism $$\Theta:\;\;1\to7,\;\;2\to8\;9,\;\;3\to10\;11\;1,\;\;\ldots$$ on the free group generated by the letters $1,2,\ldots, 11$. An edge is associated to each letter, the word $W_0=1\;2\;3\;\cdots\;11$ is mapped to the $11$-gon $\Delta_0$ depicted on Figure~\ref{BoundaTribo}. The iterations $\Theta^n(W_0)$ map to $\Delta_n$ after renormalization.
\end{remark}

\end{section}

\begin{section}{Proof of Theorem~\ref{DNDTheo}}\label{sec:prooftheo}

We recall the statement concerning non disk-like tiles. \emph{Let $\mathcal{T}_{a,b}$ be the tile associated to the substitution $\sigma_{a,b}$  ($a\geq b\geq 1$). If $2b-a> 3$, then $\mathcal{T}_{a,b}$ is not homeomorphic to a closed disk.}

\begin{proof}[Proof of Theorem~\ref{DNDTheo} (non disk-like tiles)] One proof can be found in~\cite{LoridantMessaoudiSurerThuswaldner13}, but needed the additional computation of a subgraph of the lattice boundary graph for all parameters $a,b$ - a graph that describes the boundary in the periodic tiling induced by $\T$. Here, we make no use of this periodic tiling. The proof below uses the parametrization derived from the graph $G$, already obtained by Thuswaldner in~\cite{Thuswaldner06}, or, more precisely, from our ordered version $G^+$. 

In our assumptions $a\geq b\geq 1$ and $2b-a>3$, we just need to consider two cases:
\begin{itemize}
\item[$(i)$] $b\geq 3$ and $a\geq b+1$;
\item[$(ii)$] $a=b>3$.
\end{itemize} 

In case $(i)$ we find infinite walks associated to distinct parameters $0<t<t'<1$:  
$$\left\{\begin{array}{l}t:5\xrightarrow{b-1||{\bf 2+3(2b-a-3)}}6^-\begin{array}{c}\xrightarrow{b-2||{\bf 2}}\\\xleftarrow{b-2||{\bf 2+3(2b-a-4)}}\end{array}5
\\
t':12\xrightarrow{b-1||{\bf 1}}5\begin{array}{c}\xrightarrow{b-2||{\bf 2+3(2b-a-4)}}\\\xleftarrow{b-2||{\bf 2}}\end{array}6^-
\end{array}
\right.
$$
in $G$. We refer to Table~\ref{Gab} and the corresponding Figure~\ref{fig:Gab}. 

Similarly, in case $(ii)$ we find the following infinite walks associated to distinct parameters $0<t<t'<1$:  
$$\left\{\begin{array}{l}t:5\xrightarrow{a-1||{\bf 2+3(a-3)}}6^-\begin{array}{c}\xrightarrow{a-2||{\bf 2}}\\\xleftarrow{a-2||{\bf 2+3(a-4)}}\end{array}5
\\
t':11\xrightarrow{a-1||{\bf 1}}5\begin{array}{c}\xrightarrow{a-2||{\bf 2+3(a-4)}}\\\xleftarrow{a-2||{\bf 2}}\end{array}6^-
\end{array}
\right.
$$
in $G$ (see Table~\ref{Gaeqb} and the corresponding Figure~\ref{fig:Gaeqb}).

Therefore, in both cases, we have $0<t<t'<1$ satisfying $C(t)=C(t')$, since the associated infinite walks in $G$ carry the same labels. Hence $\partial \T$ is not a simple closed curve.

\end{proof}

We now come to the characterization of the disk-like tiles. \emph{Let  $\mathcal{T}_{a,b}$ be the tile associated to the substitution $\sigma_{a,b}$  ($a\geq b\geq 1$). If $2b-a\leq 3$, then $\partial\mathcal{T}_{a,b}$ is a simple closed curve. Therefore, $\mathcal{T}_{a,b}$ is homeomorphic to a closed disk.}

We wish to show that all pairs $(t,t')\in[0,1[$ with $C(t)=C(t')$ satisfy $t=t'$. In other words, we shall show that all pairs of walks $(w,w')\in G^+$ with 
$\psi(P(w))=\psi(P(w'))$ satisfy $\phi(w)=\phi(w')$, where $\phi, P, \psi$ are defined in~(\ref{def:DTmap}),~(\ref{mapP}) and ~(\ref{mapPsi}), respectively. 

We first characterize the infinite sequences of prefixes $(p_k)_{k\geq 0},(p_k')_{k\geq 0}$ leading to the same boundary point $\sum_{k\geq 0} \mathbf{h}^k\pi \lBF(p_k)=\sum_{k\geq 0} \mathbf{h}^k\pi \lBF(p_k')$.

\begin{lemma}\label{lem:EqAd} Let $(p_k)_{k\geq 0}$ and $(p_k')_{k\geq0}$ be the labels of infinite walks  in the prefix-suffix graph $\Gamma$ starting from $i\in\mathcal{A}$ and $i'\in\mathcal{A}$, respectively. Then 
$$\sum_{k\geq 0} \mathbf{h}^k\pi \lBF(p_k)=\sum_{k\geq 0} \mathbf{h}^k\pi \lBF(p_k')=:x\in\partial \T$$

if and only if there exist $j\in\mathcal{A}$, $\gamma\in\pi(\mathbb{Z}^3)\setminus\{0\}$ with $[\gamma,j]\in\Gamma_{srs}$ and $(p_k'')_{k\geq0}$ sequence of prefixes such that
\begin{equation}\label{eq:SamePoint}\left\{\begin{array}{rcl} [i,\gamma,j]&\xrightarrow{p_0|p_0''}\cdots\xrightarrow{p_1|p_1''}\cdots&\in \mathcal{G}_0
\\
{[i',\gamma,j]}&\xrightarrow{p_0'|p_0''}\cdots\xrightarrow{p_1'|p_1''}\cdots&\in \mathcal{G}_0.
\end{array}\right.
\end{equation}

\end{lemma}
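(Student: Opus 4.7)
The statement is essentially a two-sided reformulation of Lemma~\ref{CharacBound}, so the plan is to reduce both implications to that lemma, with the characterization of $\partial\T$ supplying the existence of the ``mediating'' walk in the non-trivial direction.

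For the \emph{if} direction, the plan is a direct double application of Lemma~\ref{CharacBound}. Each of the two walks in (\ref{eq:SamePoint}) starts from a vertex $[i,\gamma,j]$ (resp.\ $[i',\gamma,j]$) with $[\gamma,j]\in\Gamma_{srs}$ and $\gamma\neq 0$, so the hypothesis ``$\gamma\neq 0$ or $i<j$'' of Lemma~\ref{CharacBound} is automatic. The lemma therefore yields
\[
\sum_{k\geq 0}\mathbf{h}^k\pi\lBF(p_k)=\gamma+\sum_{k\geq 0}\mathbf{h}^k\pi\lBF(p_k'')=\sum_{k\geq 0}\mathbf{h}^k\pi\lBF(p_k'),
\]
and the common value lies in $\T(i)\cap(\T(j)+\gamma)$ with $\gamma\neq 0$, hence in $\partial\T$ by the decomposition of $\partial\T$ recalled at the start of Section~\ref{sec:GIFS}.

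For the \emph{only if} direction, I would start from $x\in\partial\T$ and use exactly that same decomposition
\[
\partial\T=\bigcup_{i\in\A}\,\bigcup_{[\gamma,j]\in\Gamma_{srs},\,\gamma\neq 0}\T(i)\cap(\T(j)+\gamma)
\]
to produce, from the hypothesis, a pair $(\gamma,j)$ with $\gamma\neq 0$, $[\gamma,j]\in\Gamma_{srs}$, such that $x\in\T\cap(\T(j)+\gamma)$. By (\ref{deftiles}) applied to the tile $\T(j)$ translated by $\gamma$, there is an infinite walk in $\Gamma$ starting from $j$ with label sequence $(p_k'')$ satisfying $x-\gamma=\sum_{k\geq 0}\mathbf{h}^k\pi\lBF(p_k'')$. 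Combining this with the two given expansions of $x$ gives
\[
\sum_{k\geq 0}\mathbf{h}^k\pi\lBF(p_k)=\gamma+\sum_{k\geq 0}\mathbf{h}^k\pi\lBF(p_k''),\qquad \sum_{k\geq 0}\mathbf{h}^k\pi\lBF(p_k')=\gamma+\sum_{k\geq 0}\mathbf{h}^k\pi\lBF(p_k''),
\]
and Lemma~\ref{CharacBound} applied twice (the side condition ``$\gamma\neq 0$ or $i<j$'' is satisfied because $\gamma\neq 0$) produces the two desired infinite walks $[i,\gamma,j]\xrightarrow{p_0|p_0''}\cdots$ and $[i',\gamma,j]\xrightarrow{p_0'|p_0''}\cdots$ in $\mathcal{G}_0$.

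No step is genuinely hard; the only point requiring care is to insist that the intermediate neighbor $(\gamma,j)$ be chosen with $\gamma\neq 0$, which is exactly the content of $x\in\partial\T$ and is what makes the side condition in Lemma~\ref{CharacBound} come for free. The lemma will be invoked later to encode equality of two boundary expansions as the existence of a pair of walks in $\mathcal{G}_0$ sharing a common ``second coordinate'' $(p_k'')$; this triangular form is what makes subsequent injectivity arguments tractable when $2b-a\leq 3$.
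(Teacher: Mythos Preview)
Your proposal is correct and follows essentially the same approach as the paper: use the tiling decomposition of $\partial\T$ to produce a neighbor $[\gamma,j]\in\Gamma_{srs}$ with $\gamma\neq 0$ and an expansion $(p_k'')$, then apply Lemma~\ref{CharacBound} twice. Your write-up is in fact more detailed than the paper's, which compresses both directions into a single sentence and leaves the ``if'' direction and the verification of the side condition ``$\gamma\neq 0$ or $i<j$'' implicit.
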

\begin{proof}
By the tiling property, a boundary point $x$ can also be written 
 $$x=\gamma+\sum_{k\geq 0} \mathbf{h}^k\pi \lBF(p_k'')$$ 
 for some $\gamma\in\pi(\mathbb{Z}^3)\setminus\{0\}$, an infinite walk $(p_k'')_{k\geq0}$ in $\Gamma$, starting from a $j\in\mathcal{A}$, with $[\gamma,j]\in\Gamma_{srs}$. Thus the lemma follows from Lemma~\ref{CharacBound}.

\end{proof}
The above characterization requires the knowledge of the boundary graph $\mathcal{G}_0$ - the subgraph $G_0$ would not be sufficient to obtain all the identifications. $\mathcal{G}_0$ is not known for our whole class of substitutions $\sigma_{a,b}$. However, in the case $2b-a\leq 3$, it was computed in our joint work~\cite{LoridantMessaoudiSurerThuswaldner13}.

\begin{proposition}[{\cite[Theorem 3.2]{LoridantMessaoudiSurerThuswaldner13}}] \label{BoundGraphProp}
Let $\sigma_{a,b}$ be the substitution~(\ref{DefSubst}), $\mathcal{G}_{0,a,b}$ the boundary graph as in Definition~\ref{DefBoundGraph} and $G_{0,a,b}$ the graph of Definition~\ref{def:G0}.  Suppose $2b-a\leq 3$. Then
$$\mathcal{G}_{0,a,b}=G_{0,a,b}.
$$
\end{proposition}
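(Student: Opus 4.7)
By Lemma~\ref{G0subsetBoundGraph} we already have $G_{0,a,b}\subset\mathcal{G}_{0,a,b}$ for all parameters $a\ge b\ge 1$. The task is therefore to establish the reverse inclusion under the assumption $2b-a\leq 3$. My plan is to exploit the maximality built into Definition~\ref{DefBoundGraph}: produce an explicit finite superset $\mathcal{V}$ of candidate vertices by enumerating all triples $[i,\gamma,j]$ meeting the norm bound~(\ref{BoundBound}), then prune $\mathcal{V}$ by successively imposing conditions $(ii)$ and $(iii)$, and finally verify that the resulting graph coincides vertex-for-vertex and edge-for-edge with $G_{0,a,b}$.

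First I would enumerate $\mathcal{V}$. Since $\pi(\ZZ^3)$ is a rank-two lattice in $\mathbb{H}_c$ and the prefixes $p\in\{\varepsilon,1,11,\ldots,\underbrace{1\cdots1}_{a}\}$ give only finitely many values of $\pi\lBF(p)$ bounded in terms of $a,b$ and the Galois conjugates of $\beta$, condition $(i)$ restricts $\gamma$ to a finite region that can be described uniformly in $a,b$. The candidate set $\mathcal{V}$ decomposes into the $\gamma$-classes that appear in Table~\ref{SRBG1}, together with a handful of \emph{a priori} additional classes whose elimination is the content of the proof.

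Next I would check closure under condition $(ii)$. For each triple $[i,\gamma,j]\in\mathcal{V}$ and each pair of prefix-suffix edges $i\xrightarrow{p}i_1$, $j\xrightarrow{p'}j_1$ in $\Gamma_{a,b}$, the successor $\gamma_1=\hBF^{-1}(\gamma+\pi(\lBF(p')-\lBF(p)))$ is uniquely determined, and one checks whether $[i_1,\gamma_1,j_1]$ remains in $\mathcal{V}$. The assumption $2b-a\leq 3$ plays its decisive role here: it constrains how the contraction $\hBF$ distorts the admissible lattice translations $\pi(\lBF(p')-\lBF(p))$ and forces every outgoing edge to land inside $R_0$, so that the candidate edges agree with those of $G_0$ already tabulated. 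Throughout, one exploits the symmetry $[i,\gamma,j]\leftrightarrow[j,-\gamma,i]$ to halve the bookkeeping.

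Finally I would impose condition $(iii)$ by tracing \emph{backward} from the seed triples $[i,\gamma,j]$ with $[\gamma,j]\in\Gamma_{srs}$ and $(\gamma\ne 0$ or $i<j)$: any candidate that fails to lie on a walk descended from such a seed must be discarded. Under $2b-a\leq 3$ the set of seeds, which corresponds bijectively to the neighbors of $\T_{a,b}$ in the self-replicating tiling, is precisely the list $R$ of Proposition~\ref{prop:ContactGIFS}, as verified in \cite{LoridantMessaoudiSurerThuswaldner13}. Combined with the $(ii)$-closure already proved, this collapses the pruned candidate graph onto $G_{0,a,b}$, yielding the equality. The main obstacle, and the computational heart of the proof, is the exhaustive verification in the closure step: it must be carried out case by case in the three regimes $b=1$, $a=b\geq 2$, and $a\geq b+1\geq 3$ (corresponding to Tables~\ref{Gab}, \ref{Gab1}, \ref{Gaeqb}, \ref{Ga1b1}), and it is precisely the inequality $2b-a\leq 3$ that prevents additional $\gamma$-classes from surviving.
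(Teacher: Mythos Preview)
The paper does not prove this proposition at all: it is quoted as \cite[Theorem~3.2]{LoridantMessaoudiSurerThuswaldner13} and used as a black box. So there is no ``paper's own proof'' to compare against; what can be assessed is whether your outline is a viable proof strategy.

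Your plan is the natural algorithmic one (already hinted at after Definition~\ref{DefBoundGraph}): bound the candidate $\gamma$'s via~(\ref{BoundBound}), build all edges via $(ii)$, and prune via $(iii)$. This is correct in principle, but as written it is a sketch rather than a proof, and two points deserve more care. First, the bound~(\ref{BoundBound}) depends on $\alpha_1,\alpha_2$, which vary with $a,b$; to carry out the enumeration \emph{uniformly} in the parameters you must control this dependence explicitly, and it is not obvious that the list of surviving $\gamma$-classes is constant across all $(a,b)$ with $2b-a\le 3$ without doing the computation. Second, your description of condition $(iii)$ is slightly garbled: one does not ``trace backward from the seeds'' but rather forward --- a vertex survives if it lies on an infinite walk that \emph{starts} at a seed, so one needs reachability from a seed together with the existence of an infinite outgoing path. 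Also, the seeds are \emph{a priori} all triples $[i,\gamma,j]$ with $[\gamma,j]\in\Gamma_{srs}$ and $(\gamma\neq 0$ or $i<j)$, not just the list $R$; that the surviving seeds reduce to $R$ is part of what has to be shown, not an input.

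In short: the strategy is right, but the substance of the argument is precisely the ``exhaustive verification'' you defer, and that verification --- done uniformly in $a,b$ and showing exactly where $2b-a\le 3$ is used to kill extra $\gamma$-classes --- is the entire content of the cited theorem.
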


We can now characterize the disk-like tiles of our class.
\begin{proof}[Proof of Theorem~\ref{DNDTheo} (disk-like tiles)] As mentioned above, we need to check that all identifications are trivial in the parametrization, \emph{i.e.}, that infinite sequences of prefixes $(p_k)_{k\geq 0}$ and $(p_k')_{k\geq0}$ in the prefix-suffix graph $\Gamma$ satisfying 
$$\sum_{k\geq 0} \mathbf{h}^k\pi \lBF(p_k)=\sum_{k\geq 0} \mathbf{h}^k\pi \lBF(p_k')=:x\in\partial \T$$
correspond only to labels of admissible infinite walks $w$ and $w'$ in $G^+$ satisfying $\phi(w)=\phi(w')$ defined in~(\ref{def:DTmap}). The pairs of walks identified by $\phi$ are given in~(\ref{identifpairs}).

To this effect, we first look for all pairs of infinite sequences of prefixes $(p_k)\ne(p_k')$ such that  $\sum_{k\geq 0} \mathbf{h}^k\pi \lBF(p_k)=\sum_{k\geq 0} \mathbf{h}^k\pi \lBF(p_k')\in\partial \T$. This amounts to finding all the pairs of infinite admissible walks in $G_{0}=\mathcal{G}_{0}$ satisfying~(\ref{eq:SamePoint}). This can be done algorithmically by constructing an automaton $\mathcal{A}^{\psi}$, with the following states and edges.
\begin{itemize}
\item[$(i)$]  $S|S'\xrightarrow{p||{\bf o}\;|\;p||{\bf o'}}T|T'$  if and only if there is a prefix $p''$ satisfying
$$\left\{\begin{array}{rcl} S&\xrightarrow{p|p''\;||\;{\bf o}}T&\in G^+
\\
S'&\xrightarrow{p|p''\;||\;{\bf o'}}T'&\in G^+.
\end{array}\right.
$$
\item[$(ii)$]  $S|S'\xrightarrow{p||{\bf o}\;|\;p'||{\bf o'}}T||T'$ if and only if $p\ne p'$ and there is a prefix $p''$ satisfying
$$\left\{\begin{array}{rcl} S&\xrightarrow{p|p''\;||\;{\bf o}}T&\in G^+
\\
S'&\xrightarrow{p'|p''\;||\;{\bf o'}}T'&\in G^+.
\end{array}\right.
$$
\item[$(iii)$]  $S||S'\xrightarrow{p||{\bf o}\;|\;p'||{\bf o'}}T||T'$
if and only if there is a prefix $p''$ satisfying
$$\left\{\begin{array}{rcl} S&\xrightarrow{p|p''\;||\;{\bf o}}T&\in G^+
\\
S'&\xrightarrow{p'|p''\;||\;{\bf o'}}T'&\in G^+.
\end{array}\right.
$$

\end{itemize}
We call an infinite walk in $\mathcal{A}^{\psi}$ \emph{admissible} if it starts from a state $S|S'$ with $S=[i,\gamma,j]$, $S'=[i',\gamma,j]$ and $[\gamma,j]\in\Gamma_{srs}$ (possibly $S=S'$), and if it goes through at least one state of the shape $T||T'$.
Now, two sequences of the prefix-suffix automaton $\Gamma$,  $(p_k)_{k\geq 0}\ne(p_k')_{k\geq 0}$, satisfy $\sum_{k\geq 0} \mathbf{h}^k\pi \lBF(p_k)=\sum_{k\geq 0} \mathbf{h}^k\pi \lBF(p_k')\in\partial \T$ if and only if there is an admissible walk in $\mathcal{A}^{\psi}$ labelled by $(p_k||{\bf o_k}\;|\;p_k'||{\bf o_k'})_{k\geq 0}$. 

After deleting the states and edges that do not belong to an admissible walk, we get the automaton of Figure~\ref{fig:APSIab} for the case $a\geq b+1,b\geq 2$. Note that for $a=b+1$ or $b=2$, the automaton becomes lighter, as several edges disappear. The starting states for admissible walks are colored. For the sake of simplicity,  we did not depict the edges of the form $S|S\xrightarrow{p||{\bf o}\;|\;p||{\bf o}}T|T$ (particular case of $(i)$). Therefore, the states $S|S$ in these figures may be preceded by a finite walk made of such edges and ending in $S|S$. The remaining cases can be found in the Appendix~\ref{app:TheoDisk}.

Second, we look for all pairs of infinite admissible walks $w\ne w'$ of $G^+$ such that $P(w)$ and $P(w')$ carry the same infinite sequence of prefixes $(p_k)_{k\geq 0}$: the parameters $t,t'\in[0,1]$ for such walks trivially map to the same boundary point by the parametrization $C$. Again, these pairs of walks can be obtained algorithmically via an automaton $\mathcal{A}^{sl}$ with the following states and edges.
\begin{itemize}
\item[$(iv)$] $S|S\xrightarrow{p\;|\;{\bf o}||{\bf o}}T|T$  if and only if $S\xrightarrow{p\;||\;{\bf o}}T\in G^+$.
\item[$(v)$]  $S|S\xrightarrow{p\;|\;{\bf o}||{\bf o'}}T||T'$ if and only if  ${\bf o}\ne{\bf o'}$  and 
$$\left\{\begin{array}{rcl} S&\xrightarrow{p||{\bf o}}T&\in G^+
\\
S&\xrightarrow{p||{\bf o'}}T'&\in G^+.
\end{array}\right.
$$
\item[$(vi)$]   $S||S'\xrightarrow{p\;|\;{\bf o}||{\bf o'}}T||T'$ if and only if  and 
$$\left\{\begin{array}{rcl} S&\xrightarrow{p||{\bf o}}T&\in G^+
\\
S'&\xrightarrow{p||{\bf o'}}T'&\in G^+.
\end{array}\right.
$$

\end{itemize}
We call an infinite walk in $\mathcal{A}^{sl}$ \emph{admissible} if it starts from a state $S|S$ or $S||S'$ with $S=[i,\gamma,j]$, $S'=[i',\gamma,j]$ and $[\gamma,j]\in\Gamma_{srs}$ ($S\ne S'$), and if it goes through at least one state of the shape $T||T'$. Now, for two admissible infinite walks of $G^+$:
$$w=(S;{\bf o_1},{\bf o_2},\ldots)\;\ne\; w'=(S';{\bf o_1'},{\bf o_2'},\ldots),$$
$P(w)$ and $P(w')$ carry the same sequence of prefixes $(p_k)_{k\geq 0}$  if and only if there is an admissible walk in $\mathcal{A}^{sl}$ labelled by $(p_k\;|\;{\bf o_k}||{\bf o_k'})_{k\geq 0}$. 

After deleting the states and edges that do not belong to an admissible walk, we get the automaton of Figure~\ref{fig:ASLab} for the case $a\geq b+1,b\geq 2$. The starting states for admissible walks are colored. For the sake of simplicity,  we did not depict the edges of Item $(iv)$. For the remaining cases, see Appendix~\ref{app:TheoDisk}.

Note that in $\mathcal{A}^{\psi}$ as well as in $\mathcal{A}^{sl}$, no more pairs $(w,w')$ than the pairs given in~(\ref{identifpairs}) are found. Therefore, we conclude that for all $w,w'\in G^+$
$$\psi(P(w))=\psi(P(w'))\Rightarrow \phi(w)=\phi(w').
$$
Consequently, the parametrization $C:[0,1]\to\partial \T$ is injective, apart from $C(0)=C(1)$. Hence $\partial T$ is a simple closed curve and, by a theorem of Sch\"onflies~\cite{Whyburn79}, $\T$ is homeomorphic to a closed disk. 
\end{proof}

\begin{figure}
\begin{center}
\includegraphics[width=150mm,height=130mm]{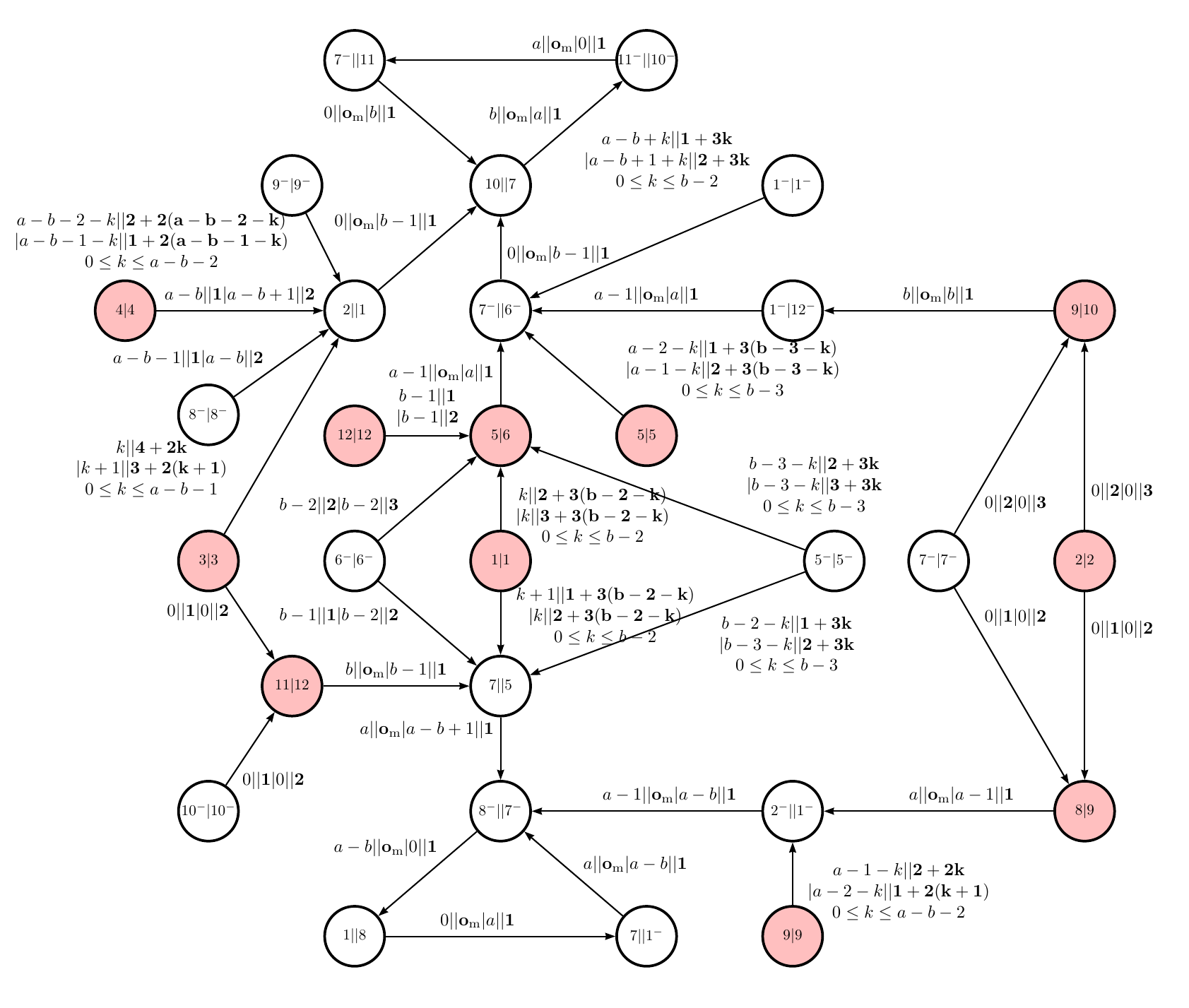}
\end{center}
\caption{$\mathcal{A}^\psi$ for $a\geq b+1,b\geq 2$ (${\bf o_{\textrm{m}}}$ stands for ${\bf o_{\textrm{max}}}$).}\label{fig:APSIab}
\end{figure}

\begin{figure}
\begin{center}
\includegraphics[width=150mm,height=120mm]{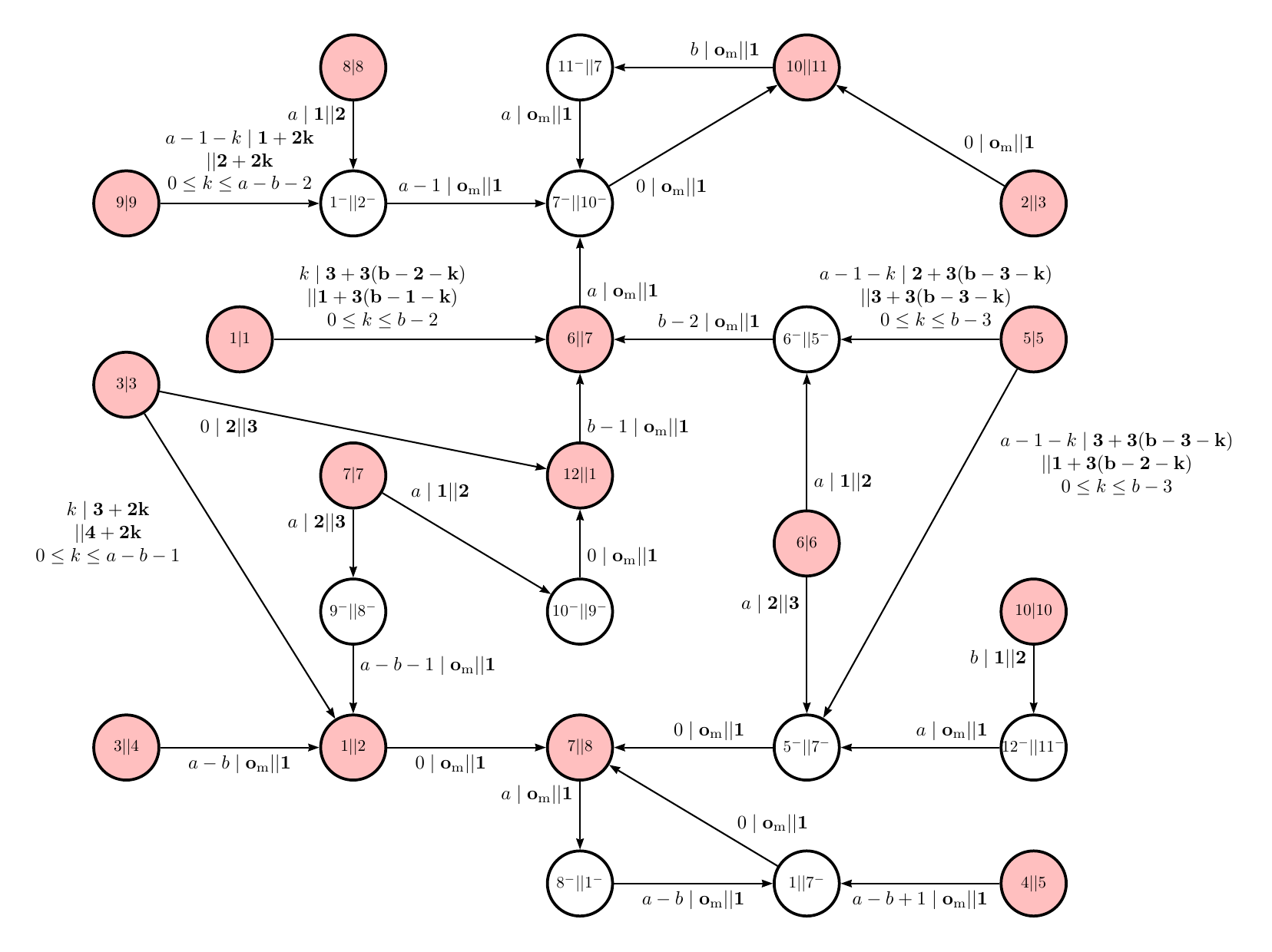}
\end{center}
\caption{$\mathcal{A}^{sl}$ for $a\geq b+1,b\geq 2$ (${\bf o_{\textrm{m}}}$ stands for ${\bf o_{\textrm{max}}}$).}\label{fig:ASLab}
\end{figure}

\end{section}

\begin{section}{Concluding remarks}\label{sec:conc}

Other projects using the parametrization method may concern the topological study of further classes of substitutions, for example families of Arnoux-Rauzy substitutions. These substitutions are of the form $\sigma=\tau_1\cdots\tau_r$, where $r\geq 3$ and $\{\tau_1,\ldots,\tau_r\}=\{\sigma_1,\sigma_2,\sigma_3\}$ ($\sigma_i$ are the Arnoux-Rauzy substitutions). For the moment, the connectedness of the associated Rauzy fractals could be obtained (see~\cite{BertheJolivetSiegel13}), but the classification disk-like/non-disk-like is still outstanding.

Another challenge is the study of non-disk-like tiles, which happens to be rather difficult. A criterion~\cite{SiegelThuswaldner10} allows to decide whether  the fundamental group is  trivial or uncountable, but more precise descriptions are not known. For given examples of self-affine tiles, the description of cut points and of connected components could be achieved (see~\cite{NgaiNguyen03,BernatLoridantThuswaldner10}). We can understand the degree of difficulty of these studies via the following considerations. In our framework, non-disk-likeness implies non-trivial identifications in the parametrization and requires to  find out non-injective points of the parametrization. To speak roughly, we need the computation of the language of $\mathcal{G}_0\setminus G_0$. Therefore, this is related to the complementation of B\"uchi automata, which is known to be a difficult problem (\cite{Thomas90,PerrinPin04}).  Note that we have here the tools to complete such a study. Similarly to~\cite[Section 4]{AkiyamaLoridant11} and as in the above proof of Theorem~\ref{DNDTheo} (disk-like tiles), we can define three automata whose edges take the form 
$$S|S'\xrightarrow{p||{\bf o}\;|\;p'||{\bf o'}}T|T',
$$
where $S\xrightarrow{p||{\bf o}}T$ and $S'\xrightarrow{p'||{\bf o'}}T'$ are edges of $\mathcal{G}_0$. A first automaton $\mathcal{A}^\phi$ gives the walks identified by the Dumont-Thomas numeration system $\phi$, \emph{i.e.},  the pairs $(w,w')\in G^+$ given in~(\ref{identifpairs}). In the disk-like case, no other walks are identified. The second automaton $\mathcal{A}^\psi$ gives pairs of walks $(w,w')$ identified by $\psi$ and is computed via Lemma~\ref{lem:EqAd}. The third automaton  $\mathcal{A}^{sl}$ gives the pairs of walks $(w,w')$ carrying the same sequence of prefixes. Topological information might be read off from the automaton $\mathcal{A}^\psi\cup \mathcal{A}^{sl}\;\setminus\;  \mathcal{A}^\phi$.

\end{section}

\appendix
\section{Details for the proof of Proposition~\ref{prop:PsiIdentif}.}\label{app:Continuity}
We check Conditions~(\ref{Cond1}),~(\ref{Cond2}) and~(\ref{Cond3}) of Proposition~\ref{prop:PsiIdentif}.  The ideas were given in the proof after the statement of this proposition. We sum up the computations in Tables~\ref{table:abCont} to~\ref{table:a1b1Cont}. As mentioned, some computations require the use of Lemma~\ref{CharacBound}. The last column refers to the items below for these special cases. In the cases below, we give the pairs of  infinite walks in $G_0\subset\mathcal{G}_0$:
$$\left\{\begin{array}{c}S=[i,\gamma,j]\xrightarrow{p_0|p'_0}S_1\xrightarrow{p_1|p_1'}S_2\xrightarrow{p_2|p_2'}\cdots\\\textrm{ and }\\
S'=[i',\gamma,j']\xrightarrow{p_0''|p_0'}S_1'\xrightarrow{p_1''|p_1'}S_2'\xrightarrow{p_2''|p_2'}\cdots,
\end{array}\right.
$$
 for which holds $$\sum_{k\geq 0}{\bf h}^k\pi {\bf l}(p_k)=\gamma+\sum_{k\geq 0}{\bf h}^k\pi {\bf l}(p_k')=\sum_{k\geq 0}{\bf h}^k\pi {\bf l}(p_k'')
$$
by Lemma~\ref{CharacBound}. The concatenation of walks, using the symbol $\&$, was defined in~(\ref{def:concatenation}).

\textbf{Case} $a\geq b+1,\;b\geq 2$. Note that the states $S=11,11^-$ have only one outgoing edge, thus it does not show up in the checking of Conditon~(\ref{Cond3}). This happens also with the states $S=5,5^-$ whenever $b=2$, and $S=9,9^-$ whenever $a=b+1$.
\begin{itemize}
\item[(1)] See proof of Proposition~\ref{prop:PsiIdentif}.
\item[(2)] $$\left\{\begin{array}{c}8\xrightarrow{p_0=a|p'_0=a-b-1}2^-\xrightarrow{p_1=a-1|p'_1=0}\overline{8^-\xrightarrow{p_2=a-b|p_2'=a}1\xrightarrow{p_3=0|p_3'=a-b}7\xrightarrow{p_4=a|p_4'=0}8^-}\\\\
9\xrightarrow{p_0''=a-1|p_0'=a-b-1}\overline{1^-\xrightarrow{p_1''=a-b|p_1'=0}7^-\xrightarrow{p_2''=0|p_2'=a}8\xrightarrow{p_3''=a|p_3'=a-b}1^-}.
\end{array}\right.
$$
\item[(3)] $$\left\{\begin{array}{c}11\xrightarrow{p_0=b|p'_0=a}\overline{7\xrightarrow{p_1=a|p_1'=0}8^-\xrightarrow{p_2=a-b|p_2'=a}1\xrightarrow{p_3=0|p_3'=a-b}7}\\\\
12\xrightarrow{p_0''=b-1|p_0'=a}5\xrightarrow{p_1=a-b+1|p'_1=0}\overline{7^-\xrightarrow{p_2''=0|p_2'=a}8\xrightarrow{p_3''=a|p_3'=a-b}1^-\xrightarrow{p_4''=a-b|p_4'=0}7^-}.
\end{array}\right.
$$

\item[(4)] For $0\leq k\leq b-2$,
$$\left\{\begin{array}{c}1\xrightarrow{p_0=b-1-k|p'_0=a-1-k}\overline{7\xrightarrow{p_1=a|p_1'=0}8^-\xrightarrow{p_2=(a-b)|p_2'=a}1\xrightarrow{p_3=0|p_3'=a-b}7}\\\\
1\xrightarrow{p_0''=b-2-k|p_0'=a-1-k}5\xrightarrow{p_1=a-b+1|p'_1=0}\overline{7^-\xrightarrow{p_2''=0|p_2'=a}8\xrightarrow{p_3''=a|p_3'=a-b}1^-\xrightarrow{p_4''=a-b|p_4'=0}7^-}.
\end{array}\right.
$$
\item[(5)]For $0\leq k\leq a-b-1$,
$$\left\{\begin{array}{c}
3\xrightarrow{p_0''=k|p_0'=b+k}2\xrightarrow{p_1=0|p'_1=a-1}\overline{10\xrightarrow{p_2''=b|p_2'=0}11^-\xrightarrow{p_3''=a|p_3'=b}7^-\xrightarrow{p_4''=0|p_4'=a}10}\\\\
3\xrightarrow{p_0=k+1|p'_0=b+k}1\xrightarrow{p_1=b-1|p'_1=a-1}\overline{7\xrightarrow{p_2=a|p_2'=0}10^-\xrightarrow{p_3=0|p_3'=b}11\xrightarrow{p_4=b|p_4'=a}7}.
\end{array}\right.
$$
\item[(6)] $$\left\{\begin{array}{c}4\xrightarrow{p_0=a-b|p'_0=a}2\xrightarrow{p_1=0|p'_1=a-1}\overline{10\xrightarrow{p_2=b|p_2'=0}11^-\xrightarrow{p_3=a|p_3'=b}7^-\xrightarrow{p_4=0|p_4'=a}10}\\\\
4\xrightarrow{p_0''=a-b+1|p_0'=a}1\xrightarrow{p_1=b-1|p'_1=a-1}\overline{7\xrightarrow{p_2''=a|p_2'=0}10^-\xrightarrow{p_3''=0|p_3'=b}11\xrightarrow{p_4''=b|p_4'=a}7}.
\end{array}\right.
$$
\item[(7)] For $0\leq k\leq a-b-2$,
$$\left\{\begin{array}{c}9\xrightarrow{p_0=a-1-k|p'_0=a-b-2-k}2^-\xrightarrow{p_1=a-1|p'_1=0}\overline{8^-\xrightarrow{p_2=a-b|p_2'=a}1\xrightarrow{p_3=0|p_3'=a-b}7\xrightarrow{p_4=a|p_4'=0}8^-}\\\\
9\xrightarrow{p_0''=a-2-k|p_0'=a-b-2-k}\overline{1^-\xrightarrow{p_1=a-b|p'_1=0}7^-\xrightarrow{p_2''=0|p_2'=a}8\xrightarrow{p_3''=a|p_3'=a-b}1^-}.
\end{array}\right.
$$

\item[(8)]For $0\leq k\leq b-2$,
$$\left\{\begin{array}{c}1^-\xrightarrow{p_0=a-b+k|p'_0=k}\overline{7^-\xrightarrow{p_1=0|p'_1=a}10\xrightarrow{p_2=b|p_2'=0}11^-\xrightarrow{p_3=a|p_3'=b}7^-}\\\\
1^-\xrightarrow{p_0''=a-b+k+1|p_0'=k}6^-\xrightarrow{p_1=b-1|p'_1=a}\overline{7\xrightarrow{p_2''=a|p_2'=0}10^-\xrightarrow{p_3''=0|p_3'=b}11\xrightarrow{p_4''=b|p_4'=a}7}.
\end{array}\right.
$$

\item[(9)] For $0\leq k\leq b-3$,
$$\left\{\begin{array}{c}5^-\xrightarrow{p_0=b-2-k|p'_0=a-1-k}\overline{7\xrightarrow{p_1=a|p'_1=0}8^-\xrightarrow{p_2=a-b|p_2'=a}1\xrightarrow{p_3=0|p_3'=a-b}7}\\\\
5^-\xrightarrow{p_0''=b-3-k|p_0'=a-1-k}5\xrightarrow{p_1=a-b+1|p'_1=0}\overline{7^-\xrightarrow{p_2''=0|p_2'=a}8\xrightarrow{p_3''=a|p_3'=a-b}1^-\xrightarrow{p_4''=a-b|p_4'=0}7^-}.
\end{array}\right.
$$

\item[(10)] 
$$\left\{\begin{array}{c}6^-\xrightarrow{p_0=b-1|p'_0=a}\overline{7\xrightarrow{p_1=a|p'_1=0}8^-\xrightarrow{p_2=a-b|p_2'=a}1\xrightarrow{p_3=0|p_3'=a-b}7}\\\\
6^-\xrightarrow{p_0''=b-2|p_0'=a}5\xrightarrow{p_1=a-b+1|p'_1=0}\overline{7^-\xrightarrow{p_2''=0|p_2'=a}8\xrightarrow{p_3''=a|p_3'=a-b}1^-\xrightarrow{p_4''=a-b|p_4'=0}7^-}.
\end{array}\right.
$$

\item[(11)] $$\left\{\begin{array}{c}8^-\xrightarrow{p_0=a-b-1|p'_0=a}2\xrightarrow{p_1=0|p'_1=a-1}\overline{10\xrightarrow{p_2=b|p'_2=0}11^-\xrightarrow{p_3=a|p_3'=b}7^-\xrightarrow{p_4=0|p_4'=a}10}\\\\
8^-\xrightarrow{p_0''=a-b|p_0'=a}1\xrightarrow{p_1=b-1|p'_1=a-1}\overline{7\xrightarrow{p_2''=a|p_2'=0}10^-\xrightarrow{p_3''=0|p_3'=b}11\xrightarrow{p_4''=b|p_4'=a}7}.
\end{array}\right.
$$

\end{itemize}

\textbf{Case} $a\geq 2,\;b=1$. We take advantage of the similarities with the preceding case (compare the graph  of Figure~\ref{fig:Gab1}, Table~\ref{Gab1} with the graph of Figure~\ref{fig:Gab}, Table~\ref{Gab} when taking $b=1$). 

Conditions~(\ref{Cond1}) is checked as in Table~\ref{table:abCont} for $S=1,2,3,6,7,8,10$ and the referred items by taking $b=1$. 

Condition~(\ref{Cond3}) is checked as in Tables~\ref{table:abCont2A} and~\ref{table:abCont2B} for 
$$\begin{array}{rcl}
(S;{\bf o})&\in&\left\{(2;{\bf 1}),(3;{\bf 3+2k}),(3;{\bf 4+2k}),(7;{\bf 1}),(7;{\bf 2}),(8;{\bf 1}),(9;{\bf 1}),(9;{\bf 2}),\right.\\
&&\left.(2^-;{\bf 1}),(2^-;{\bf 2}),(7^-;{\bf 1}),(9^-;{\bf 1+2k}),(9^-;{\bf 2+2k})\right\}.
\end{array}
$$ 
by taking $b=1$. Note that the states $S=1,4,5,6,11,12,1^-,6^-,11^-,12^-$ have only one outgoing edge, that is why they do not show up in the checking of Conditon~(\ref{Cond3}). This also happens for $S=9,9^-$, whenever $a=2$.

The remaining cases and Condition~(\ref{Cond2}) are presented in Table~\ref{table:ab1Cont}, with references to the items below when the use of Lemma~\ref{CharacBound} is required.

\begin{itemize}
\item[(12)] $$\left\{\begin{array}{c}4\xrightarrow{p_0=a-1|p'_0=a}2\xrightarrow{p_1=0|p'_1=a-1}\overline{10\xrightarrow{p_2=1|p'_2=0}11^-\xrightarrow{p_3=a|p_3'=1}7^-\xrightarrow{p_4=0|p_4'=a}10}\\\\
5\xrightarrow{p_0''=a|p_0'=a}1\xrightarrow{p_1=0|p'_1=a-1}\overline{7\xrightarrow{p_2''=a|p_2'=0}10^-\xrightarrow{p_3''=0|p_3'=1}11\xrightarrow{p_4''=1|p_4'=a}7}.
\end{array}\right.
$$
\item[(13)] $$\left\{\begin{array}{c}9\xrightarrow{p_0=1|p'_0=0}1^-\xrightarrow{p_1=a-1|p'_1=0}\overline{7^-\xrightarrow{p_2=0|p'_2=a}10\xrightarrow{p_3=1|p_3'=0}11^-\xrightarrow{p_4=a|p_4'=1}7^-}\\\\
10\xrightarrow{p_0''=1|p_0'=0}12^-\xrightarrow{p_1=a|p'_1=0}6^-\xrightarrow{p_2''=0|p_2'=a}\overline{7\xrightarrow{p_3''=a|p_3'=0}10^-\xrightarrow{p_4''=0|p_4'=1}11\xrightarrow{p_5''=1|p_5'=a}7}.
\end{array}\right.
$$

\item[(14)]  $$\left\{\begin{array}{c}11\xrightarrow{p_0=1|p'_0=a}\overline{7\xrightarrow{p_1=a|p_1'=0}8^-\xrightarrow{p_2=a-1|p_2'=a}1\xrightarrow{p_3=0|p_3'=a-1}7}\\\\
12\xrightarrow{p_0''=0|p_0'=a}6\xrightarrow{p_1=a|p'_1=0}\overline{7^-\xrightarrow{p_2''=0|p_2'=a}8\xrightarrow{p_3''=a|p_3'=a-1}1^-\xrightarrow{p_4''=a-1|p_4'=0}7^-}.
\end{array}\right.
$$

\end{itemize}

\textbf{Case} $a=b\geq 2$.  This case has similarities with the case $a\geq b+1,b\geq 2$. However, the number of starting states reduces to $11$. We present the results in Tables~\ref{table:aeqbCont} to~\ref{table:aeqbCont2B}. Note that the states $S=8,10,8^-,10^-$ have only one outgoing edge, thus they do not show up in the checking of Conditon~(\ref{Cond3}). This also happens for $S=5,5^-$, whenever $a=2$.
\begin{itemize}
\item[(15)] $$\left\{\begin{array}{c}5\xrightarrow{p_0=a-1|p'_0=a-2}\overline{7^{-}\xrightarrow{p_1=0|p_1'=a}9\xrightarrow{p_2=a|p_2'=0}11^{-}\xrightarrow{p_3=a|p_3'=a}7^-}\\\\
6\xrightarrow{p_0''=a|p_0'=a-2}6^-\xrightarrow{p_1''=a-1|p_1'=a}\overline{7\xrightarrow{p_2''=a|p_2'=0}9^{-}\xrightarrow{p_3''=0|p_3'=a}10\xrightarrow{p_4''=a|p_4'=a}7},
\end{array}\right.
$$

\item[(16)]  $$\left\{\begin{array}{c}10\xrightarrow{p_0=a|p'_0=a}\overline{7\xrightarrow{p_1=a|p_1'=0}8^-\xrightarrow{p_2=0|p_2'=a}1\xrightarrow{p_3=0|p_3'=0}7}\\\\
11\xrightarrow{p_0''=a-1|p_0'=a}5\xrightarrow{p_1=1|p'_1=0}\overline{7^-\xrightarrow{p_2''=0|p_2'=a}8\xrightarrow{p_3''=a|p_3'=0}1^-\xrightarrow{p_4''=0|p_4'=0}7^-}.
\end{array}\right.
$$

\item[(17)]For $0\leq k\leq a-2$,
$$\left\{\begin{array}{c}1\xrightarrow{p_0=a-1-k|p'_0=a-1-k}\overline{7\xrightarrow{p_1=a|p_1'=0}8^-\xrightarrow{p_2=0|p_2'=a}1\xrightarrow{p_3=0|p_3'=0}7}\\\\
1\xrightarrow{p_0''=a-2-k|p_0'=a-1-k}5\xrightarrow{p_1=1|p'_1=0}\overline{7^-\xrightarrow{p_2''=0|p_2'=a}8\xrightarrow{p_3''=a|p_3'=0}1^-\xrightarrow{p_4''=0|p_4'=0}7^-}.
\end{array}\right.
$$

\item[(18)] $$\left\{\begin{array}{c}4\xrightarrow{p_0=0|p'_0=a}2\xrightarrow{p_1=0|p'_1=a-1}\overline{8\xrightarrow{p_2=a|p_2'=0}9^-\xrightarrow{p_3=a|p_3'=a}7^-\xrightarrow{p_4=0|p_4'=a}9}\\\\
4\xrightarrow{p_0''=1|p_0'=a}1\xrightarrow{p_1=a-1|p'_1=a-1}\overline{7\xrightarrow{p_2''=a|p_2'=0}9^-\xrightarrow{p_3''=0|p_3'=a}10\xrightarrow{p_4''=a|p_4'=a}7}.
\end{array}\right.
$$

\item[(19)] For $0\leq k\leq a-2$,
$$\left\{\begin{array}{c}1^-\xrightarrow{p_0=k|p'_0=k}\overline{7^-\xrightarrow{p_1=0|p'_1=a}9\xrightarrow{p_2=a|p_2'=0}10^-\xrightarrow{p_3=a|p_3'=a}7^-}\\\\
1^-\xrightarrow{p_0''=k+1|p_0'=k}6^-\xrightarrow{p_1=a-1|p'_1=a}\overline{7\xrightarrow{p_2''=a|p_2'=0}9^-\xrightarrow{p_3''=0|p_3'=a}10\xrightarrow{p_4''=a|p_4'=a}7}.
\end{array}\right.
$$

\item[(20)] For $0\leq k\leq a-3$,
$$\left\{\begin{array}{c}5^-\xrightarrow{p_0=a-2-k|p'_0=a-1-k}\overline{7\xrightarrow{p_1=a|p'_1=0}8^-\xrightarrow{p_2=0|p_2'=a}1\xrightarrow{p_3=0|p_3'=0}7}\\\\
5^-\xrightarrow{p_0''=a-3-k|p_0'=a-1-k}5\xrightarrow{p_1=1|p'_1=0}\overline{7^-\xrightarrow{p_2''=0|p_2'=a}8\xrightarrow{p_3''=a|p_3'=0}1^-\xrightarrow{p_4''=0|p_4'=0}7^-}.
\end{array}\right.
$$

\item[(21)] $$\left\{\begin{array}{c}6^-\xrightarrow{p_0=a-1|p'_0=a}\overline{7\xrightarrow{p_1=a|p'_1=0}8^-\xrightarrow{p_2=0|p_2'=a}1\xrightarrow{p_3=0|p_3'=0}7}\\\\
6^-\xrightarrow{p_0''=a-2|p_0'=a}5\xrightarrow{p_1=1|p'_1=0}\overline{7^-\xrightarrow{p_2''=0|p_2'=a}8\xrightarrow{p_3''=a|p_3'=0}1^-\xrightarrow{p_4''=0|p_4'=0}7^-}.
\end{array}\right.
$$

\end{itemize}

\textbf{Case}  $a=b=1$. There are similarities with the preceding case $a=b\geq 2$ (compare the graph  of Figure~\ref{fig:Gaeqb}, Table~\ref{Gaeqb} with the graph of Figure~\ref{fig:Ga1b1}, Table~\ref{Ga1b1} when taking $a=1$). 

Conditions~(\ref{Cond1}) and Condition~(\ref{Cond2}) are checked as in Table~\ref{table:aeqbCont} for $S=1,2,3,6,7,8,9,11$ and the referred items by taking $a=1$. 

Condition~(\ref{Cond3}) is checked as in Tables~\ref{table:aeqbCont2A} and~\ref{table:aeqbCont2B} for 
$$\begin{array}{rcl}
(S;{\bf o})&\in&\left\{(2;{\bf 1}),(3;{\bf 2}),(7;{\bf 1}),(7^-;{\bf 1})\right\}.
\end{array}
$$ 
by taking $a=1$. Note that the states $S=1,4,5,6,8,10,11,1^-,6^-,8^-,10^-,11^-$ have only one outgoing edge, that is why they do not show up in the checking of Conditon~(\ref{Cond3}). 

The remaining cases are presented in Table~\ref{table:a1b1Cont}, with references to the items below when the use of Lemma~\ref{CharacBound} is required.
\begin{itemize}
\item[(22)]
$$\left\{\begin{array}{c}4\xrightarrow{p_0=0|p'_0=1}2\xrightarrow{p_1=0|p'_1=0}\overline{9\xrightarrow{p_2=1|p'_2=0}10^-\xrightarrow{p_3=1|p_3'=1}7^-\xrightarrow{p_4=0|p_4'=1}9}\\\\
5\xrightarrow{p_0''=1|p_0'=1}1\xrightarrow{p_1=0|p'_1=0}\overline{7\xrightarrow{p_2''=1|p_2'=0}9^-\xrightarrow{p_3''=0|p_3'=1}10\xrightarrow{p_4''=1|p_4'=1}7}.
\end{array}\right.
$$
\item[(23)]
$$\left\{\begin{array}{c}10\xrightarrow{p_0=1|p'_0=1}\overline{7\xrightarrow{p_1=1|p_1'=0}8^-\xrightarrow{p_2=0|p_2'=1}1\xrightarrow{p_3=0|p_3'=0}7}\\\\
11\xrightarrow{p_0''=0|p_0'=1}6\xrightarrow{p_1=1|p'_1=0}\overline{7^-\xrightarrow{p_2''=0|p_2'=1}8\xrightarrow{p_3''=1|p_3'=0}1^-\xrightarrow{p_4''=0|p_4'=0}7^-}.
\end{array}\right.
$$

\end{itemize}

\begin{table}[h!]\scriptsize\begin{center}
\begin{tabular}{|c|c||c|c|c|}
\hline
 Walk 1& Walk 2& Sequence 1& Sequence 2 & Checking via Lemma~\ref{CharacBound}: see Item\ldots\\
\hline &&&&\\
$(1;\overline{{\bf o_{\textrm{max}}}})$&$(2;\overline{{\bf 1}})$&$\overline{0a(a-b)}$&$0\overline{a(a-b)0}$&
\\\hline&&&&\\
$(2;\overline{{\bf o_{\textrm{max}}}})$&$(3;\overline{{\bf 1}})$&$0\overline{ba0}$&$0\overline{ba0}$&
\\\hline&&&&\\
$(3;\overline{{\bf o_{\textrm{max}}}})$&$(4;\overline{{\bf 1}})$&$(a-b)\overline{0a(a-b)}$&$(a-b)0\overline{a(a-b)0}$&
\\\hline&&&&\\
$(4;\overline{{\bf o_{\textrm{max}}}})$&$(5;\overline{{\bf 1}})$&$(a-b+1)\overline{0a(a-b)}$&$(a-b+1)\overline{0a(a-b)}$&
\\\hline&&&&\\
$(5;\overline{{\bf o_{\textrm{max}}}})$&$(6;\overline{{\bf 1}})$&$(a-1)\overline{0ba}$&$a(b-1)\overline{a0b}$& (1)
\\\hline&&&&\\
$(6;\overline{{\bf o_{\textrm{max}}}})$&$(7;\overline{{\bf 1}})$&$a\overline{0ba}$&$\overline{a0b}$&
\\\hline&&&&\\
$(7;\overline{{\bf o_{\textrm{max}}}})$&$(8;\overline{{\bf 1}})$&$\overline{a(a-b)0}$&$\overline{a(a-b)0}$&
\\\hline&&&&\\
$(8;\overline{{\bf o_{\textrm{max}}}})$&$(9;\overline{{\bf 1}})$&$a(a-1)\overline{(a-b)0a}$&$(a-1)\overline{(a-b)0a}$& (2)
\\\hline&&&&\\
$(9;\overline{{\bf o_{\textrm{max}}}})$&$(10;\overline{{\bf 1}})$&$b(a-1)\overline{0ba}$&$ba(b-1)\overline{a0b}$&(1)
\\\hline&&&&\\
$(10;\overline{{\bf o_{\textrm{max}}}})$&$(11;\overline{{\bf 1}})$&$\overline{ba0}$&$\overline{ba0}$&
\\\hline&&&&\\
$(11;\overline{{\bf o_{\textrm{max}}}})$&$(12;\overline{{\bf 1}})$&$b\overline{a(a-b)0}$&$(b-1)(a-b+1)\overline{0a(a-b)}$& (3)
\\\hline&&&&\\
$(12;\overline{{\bf o_{\textrm{max}}}})$&$(1;\overline{{\bf 1}})$&$(b-1)a\overline{0ba}$&$(b-1)\overline{(a0b)}$&
\\\hline
\end{tabular}
\caption{Case $a\geq b+1,\;b\geq 2$, Conditions~(\ref{Cond1}) and~(\ref{Cond2}).}\label{table:abCont}
\end{center}
\end{table}

\begin{table}[h!]\scriptsize\begin{center}
\begin{tabular}{|c|c||c|c|c|}
\hline
 Walk 1& Walk 2& Sequence 1& Sequence 2 & Item\\
\hline &&&&\\
$\begin{array}{c}(1;{\bf 1+3k},\overline{{\bf o_{\textrm{max}}}})\\=(1;{\bf 1+3k})\&(7;\overline{{\bf o_{\textrm{max}}}}),\\k=0,\ldots,b-2\end{array}$&$\begin{array}{c}(1;{\bf 2+3k},\overline{{\bf 1}})\\=(1;{\bf 2+3k})\&(5;\overline{{\bf1}})\end{array}$&$(b-1-k)\overline{a(a-b)0}$&$(b-2-k)(a-b+1)\overline{0a(a-b)}$&(4)
\\\hline&&&&\\
$\begin{array}{c}(1;{\bf 2+3k},\overline{{\bf o_{\textrm{max}}}})\\=(1;{\bf 2+3k})\&(5;\overline{{\bf o_{\textrm{max}}}}),\\k=0,\ldots,b-2\end{array}$&$\begin{array}{c}(1;{\bf 3+3k},\overline{{\bf 1}})\\=(1;{\bf 3+3k})\&(6;\overline{{\bf1}})\end{array}$&$(b-2-k)(b-1)\overline{0ba}$&$(b-2-k)a(b-1)\overline{a0b}$& (1)
\\\hline&&&&\\
$\begin{array}{c}(1;{\bf 3+3k},\overline{{\bf o_{\textrm{max}}}})\\=(1;{\bf 3+3k})\&(6;\overline{{\bf o_{\textrm{max}}}}),\\k=0,\ldots,b-2\end{array}$&$\begin{array}{c}(1;{\bf 1+3(k+1)},\overline{{\bf 1}})\\=(1;{\bf 1+3(k+1)})\&(7;\overline{{\bf1}})\end{array}$&$(b-2-k)\overline{a0b}$&$(b-2-k)\overline{a0b}$&
\\\hline&&&&\\
$\begin{array}{c}(2;{\bf 1},\overline{{\bf o_{\textrm{max}}}})\\=(2;{\bf 1})\&(8;\overline{{\bf o_{\textrm{max}}}})\end{array}$&$\begin{array}{c}(2;{\bf 2},\overline{{\bf 1}})\\=(2;{\bf 2})\&(9;\overline{{\bf 1}})\end{array}$&$0a(a-1)\overline{(a-b)0a}$&$0(a-1)\overline{(a-b)0a}$& (2)
\\\hline&&&&\\
$\begin{array}{c}(2;{\bf 2},\overline{{\bf o_{\textrm{max}}}})\\=(2;{\bf 2})\&(9;\overline{{\bf o_{\textrm{max}}}})\end{array}$&$\begin{array}{c}(2;{\bf 3},\overline{{\bf 1}})\\=(2;{\bf 3})\&(10;\overline{{\bf 1}})\end{array}$&$0b(a-1)\overline{0ba}$&$0ba(b-1)\overline{a0b}$& (1)
\\\hline&&&&\\
$\begin{array}{c}(3;{\bf 1},\overline{{\bf o_{\textrm{max}}}})\\=(3;{\bf 1})\&(11;\overline{{\bf o_{\textrm{max}}}})\end{array}$&$\begin{array}{c}(3;{\bf 2},\overline{{\bf 1}})\\=(3;{\bf 2})\&(12;\overline{{\bf 1}})\end{array}$&$0b\overline{a(a-b)0}$&$0(b-1)(a-b+1)\overline{0a(a-b)}$& (3)
\\\hline&&&&\\
$\begin{array}{c}(3;{\bf 2},\overline{{\bf o_{\textrm{max}}}})\\=(3;{\bf 2})\&(12;\overline{{\bf o_{\textrm{max}}}})\end{array}$&$\begin{array}{c}(3;{\bf 3},\overline{{\bf 1}})\\=(3;{\bf 3})\&(1;\overline{{\bf 1}})\end{array}$&$0(b-1)a\overline{0ba}$&$0(b-1)\overline{a0b}$&
\\\hline&&&&\\
$\begin{array}{c}(3;{\bf 3+2k},\overline{{\bf o_{\textrm{max}}}})\\=(3;{\bf 3+2k})\&(1;\overline{{\bf o_{\textrm{max}}}}),\\k=0,\ldots,a-b-1\end{array}$&$\begin{array}{c}(3;{\bf 4+2k},\overline{{\bf 1}})\\=(3;{\bf 4+2k})\&(2;\overline{{\bf1}})\end{array}$&$k\overline{0a(a-b)}$&$k\overline{0a(a-b)}$&
\\\hline&&&&\\
$\begin{array}{c}(3;{\bf 4+2k},\overline{{\bf o_{\textrm{max}}}})\\=(3;{\bf 4+2k})\&(2;\overline{{\bf o_{\textrm{max}}}}),\\k=0,\ldots,a-b-1\end{array}$&$\begin{array}{c}(3;{\bf 3+2(k+1)},\overline{{\bf 1}})\\=(3;{\bf 3+2(k+1)})\&(1;\overline{{\bf1}})\end{array}$&$k0\overline{ba0}$&$(k+1)(b-1)\overline{a0b}$& (5)
\\\hline&&&&\\
$\begin{array}{c}(4;{\bf 1},\overline{{\bf o_{\textrm{max}}}})\\=(4;{\bf 1})\&(2;\overline{{\bf o_{\textrm{max}}}})\end{array}$&$\begin{array}{c}(4;{\bf 2},\overline{{\bf 1}})\\=(4;{\bf 2})\&(1;\overline{{\bf 1}})\end{array}$&$(a-b)0\overline{ba0}$&$(a-b+1)(b-1)\overline{a0b}$& (6)
\\\hline&&&&\\
$\begin{array}{c}(5;{\bf 1+3k},\overline{{\bf o_{\textrm{max}}}})\\=(5;{\bf 1+3k})\&(7^-;\overline{{\bf o_{\textrm{max}}}}),\\k=0,\ldots,b-3\end{array}$&$\begin{array}{c}(5;{\bf 2+3k},\overline{{\bf 1}})\\=(5;{\bf 2+3k})\&(6^-;\overline{{\bf1}})\end{array}$&$(a-b+1+k)\overline{0ba}$&$(a-b+2+k)(b-1)\overline{a0b}$& (6)
\\\hline&&&&\\
$\begin{array}{c}(5;{\bf 2+3k},\overline{{\bf o_{\textrm{max}}}})\\=(5;{\bf 2+3k})\&(6^-;\overline{{\bf o_{\textrm{max}}}}),\\k=0,\ldots,b-3\end{array}$&$\begin{array}{c}(5;{\bf 3+3k},\overline{{\bf 1}})\\=(5;{\bf 3+3k})\&(5^-;\overline{{\bf1}})\end{array}$&$(a-b+2+k)(b-2)\overline{a0b}$&$(a-b+2+k)(b-2)\overline{a0b}$&
\\\hline&&&&\\
$\begin{array}{c}(5;{\bf 3+3k},\overline{{\bf o_{\textrm{max}}}})\\=(5;{\bf 3+3k})\&(5^-;\overline{{\bf o_{\textrm{max}}}}),\\k=0,\ldots,b-3\end{array}$&$\begin{array}{c}(5;{\bf 1+3(k+1)},\overline{{\bf 1}})\\=(5;{\bf 1+3(k+1)})\&(7^-;\overline{{\bf1}})\end{array}$&$(a-b+2+k)\overline{0a(a-b)}$&$(a-b+2+k)\overline{0a(a-b)}$&
\\\hline&&&&\\
$\begin{array}{c}(6;{\bf 1},\overline{{\bf o_{\textrm{max}}}})\\=(6;{\bf 1})\&(6^-;\overline{{\bf o_{\textrm{max}}}})\end{array}$&$\begin{array}{c}(6;{\bf 2},\overline{{\bf 1}})\\=(6;{\bf 2})\&(5^-;\overline{{\bf 1}})\end{array}$&$a(b-2)\overline{a0b}$&$a(b-2)\overline{a0b}$&
\\\hline&&&&\\
$\begin{array}{c}(6;{\bf 2},\overline{{\bf o_{\textrm{max}}}})\\=(6;{\bf 2})\&(5^-;\overline{{\bf o_{\textrm{max}}}})\end{array}$&$\begin{array}{c}(6;{\bf 3},\overline{{\bf 1}})\\=(6;{\bf 3})\&(7^-;\overline{{\bf 1}})\end{array}$&$a\overline{0a(a-b)}$&$a\overline{0a(a-b)}$&
\\\hline&&&&\\
$\begin{array}{c}(7;{\bf 1},\overline{{\bf o_{\textrm{max}}}})\\=(7;{\bf 1})\&(10^-;\overline{{\bf o_{\textrm{max}}}})\end{array}$&$\begin{array}{c}(7;{\bf 2},\overline{{\bf 1}})\\=(7;{\bf 2})\&(9^-;\overline{{\bf 1}})\end{array}$&$a0(b-1)\overline{a0b}$&$a0(b-1)\overline{a0b}$&
\\\hline&&&&\\
$\begin{array}{c}(7;{\bf 2},\overline{{\bf o_{\textrm{max}}}})\\=(7;{\bf 2})\&(9^-;\overline{{\bf o_{\textrm{max}}}})\end{array}$&$\begin{array}{c}(7;{\bf 3},\overline{{\bf 1}})\\=(7;{\bf 3})\&(8^-;\overline{{\bf 1}})\end{array}$&$a(a-b-1)\overline{0a(a-b)}$&$a(a-b-1)\overline{0a(a-b)}$&
\\\hline&&&&\\
$\begin{array}{c}(8;{\bf 1},\overline{{\bf o_{\textrm{max}}}})\\=(8;{\bf 1})\&(1^-;\overline{{\bf o_{\textrm{max}}}})\end{array}$&$\begin{array}{c}(8;{\bf 2},\overline{{\bf 1}})\\=(8;{\bf 2})\&(2^-;\overline{{\bf 1}})\end{array}$&$a(a-1)\overline{0ba}$&$a(a-1)\overline{0ba}$&
\\\hline&&&&\\
$\begin{array}{c}(9;{\bf 1+2k},\overline{{\bf o_{\textrm{max}}}})\\=(9;{\bf 1+2k})\&(1^-;\overline{{\bf o_{\textrm{max}}}}),\\k=0,\ldots,a-b-2\end{array}$&$\begin{array}{c}(9;{\bf 2+2k},\overline{{\bf 1}})\\=(9;{\bf 2+2k})\&(2^-;\overline{{\bf1}})\end{array}$&$(a-1-k)(a-1)\overline{0ba}$&$(a-1-k)(a-1)\overline{0ba}$&
\\\hline&&&&\\
$\begin{array}{c}(9;{\bf 2+2k},\overline{{\bf o_{\textrm{max}}}})\\=(9;{\bf 2+2k)})\&(2^-;\overline{{\bf o_{\textrm{max}}}}),\\k=0,\ldots,a-b-2\end{array}$&$\begin{array}{c}(9;{\bf 1+2(k+1)},\overline{{\bf 1}})\\=(9;{\bf 1+2(k+1)})\&(1^-;\overline{{\bf1}})\end{array}$&$(a-1-k)(a-1)\overline{(a-b)0a}$&$(a-2-k)\overline{(a-b)0a}$&(7)
\\\hline&&&&\\
$\begin{array}{c}(10;{\bf 1},\overline{{\bf o_{\textrm{max}}}})\\=(10;{\bf 1})\&(12^-;\overline{{\bf o_{\textrm{max}}}})\end{array}$&$\begin{array}{c}(10;{\bf 2},\overline{{\bf 1}})\\=(10;{\bf 2})\&(11^-;\overline{{\bf 1}})\end{array}$&$ba0\overline{a(a-b)0}$&$ba0\overline{a(a-b)0}$&
\\\hline&&&&\\
$\begin{array}{c}(12;{\bf 1},\overline{{\bf o_{\textrm{max}}}})\\=(12;{\bf 1})\&(5;\overline{{\bf o_{\textrm{max}}}})\end{array}$&$\begin{array}{c}(12;{\bf 2},\overline{{\bf 1}})\\=(12;{\bf 2})\&(6;\overline{{\bf 1}})\end{array}$&$(b-1)(a-1)\overline{0ba}$&$(b-1)a(b-1)\overline{a0b}$&(1)
\\\hline
\end{tabular}
\caption{Case $a\geq b+1,\;b\geq 2$, Condition~(\ref{Cond3}), $S=1,\ldots, 12$.}\label{table:abCont2A}
\end{center}
\end{table}

\begin{table}[h!]\scriptsize\begin{center}
\begin{tabular}{|c|c||c|c|c|}
\hline
 Walk 1& Walk 2& Sequence 1& Sequence 2 & Item\\
\hline &&&&\\
$\begin{array}{c}(1^-;{\bf 1+3k},\overline{{\bf o_{\textrm{max}}}})\\=(1^-;{\bf 1+3k})\&(7^-;\overline{{\bf o_{\textrm{max}}}}),\\k=0,\ldots,b-2\end{array}$&$\begin{array}{c}(1^-;{\bf 2+3k},\overline{{\bf 1}})\\=(1^-;{\bf 2+3k})\&(6^-;\overline{{\bf1}})\end{array}$&$(a-b+k)\overline{0ba}$&$(a-b+1+k)(b-1)\overline{a0b}$& (8)
\\\hline&&&&\\
$\begin{array}{c}(1^-;{\bf 2+3k},\overline{{\bf o_{\textrm{max}}}})\\=(1^-;{\bf 2+3k})\&(6^-;\overline{{\bf o_{\textrm{max}}}}),\\k=0,\ldots,b-2\end{array}$&$\begin{array}{c}(1^-;{\bf 3+3k},\overline{{\bf 1}})\\=(1^-;{\bf 3+3k})\&(5^-;\overline{{\bf1}})\end{array}$&$(a-b+1+k)(b-2)a\overline{0ba}$&$(a-b+1+k)(b-2)\overline{a0b}$&
\\\hline&&&&\\
$\begin{array}{c}(1^-;{\bf 3+3k},\overline{{\bf o_{\textrm{max}}}})\\=(1^-;{\bf 3+3k})\&(5^-;\overline{{\bf o_{\textrm{max}}}}),\\k=0,\ldots,b-2\end{array}$&$\begin{array}{c}(1^-;{\bf 1+3(k+1)},\overline{{\bf 1}})\\=(1^-;{\bf 1+3(k+1)})\&(7^-;\overline{{\bf1}})\end{array}$&$(a-b+1+k)0\overline{a(a-b)0}$&$(a-b+k+1)\overline{0a(a-b)}$&
\\\hline&&&&\\
$\begin{array}{c}(2^-;{\bf 1},\overline{{\bf o_{\textrm{max}}}})\\=(2^-;{\bf 1})\&(10^-;\overline{{\bf o_{\textrm{max}}}})\end{array}$&$\begin{array}{c}(2^-;{\bf 2},\overline{{\bf 1}})\\=(2^-;{\bf 2})\&(9^-;\overline{{\bf 1}})\end{array}$&$(a-1)0(b-1)a\overline{0ba}$&$(a-1)0(b-1)\overline{a0b}$&
\\\hline&&&&\\
$\begin{array}{c}(2^-;{\bf 2},\overline{{\bf o_{\textrm{max}}}})\\=(2^-;{\bf 2})\&(9^-;\overline{{\bf o_{\textrm{max}}}})\end{array}$&$\begin{array}{c}(2^-;{\bf 3},\overline{{\bf 1}})\\=(2^-;{\bf 3})\&(10^-;\overline{{\bf 1}})\end{array}$&$(a-1)(a-b-1)\overline{0a(a-b)}$&$(a-1)(a-b-1)\overline{0a(a-b)}$&
\\\hline&&&&\\
$\begin{array}{c}(5^-;{\bf 1+3k},\overline{{\bf o_{\textrm{max}}}})\\=(5^-;{\bf 1+3k})\&(7;\overline{{\bf o_{\textrm{max}}}}),\\k=0,\ldots,b-3\end{array}$&$\begin{array}{c}(5^-;{\bf 2+3k},\overline{{\bf 1}})\\=(5^-;{\bf 2+3k})\&(5;\overline{{\bf1}})\end{array}$&$(b-2-k)\overline{a(a-b)0}$&$(b-3-k)(a-b+1)\overline{0a(a-b)}$& (9)
\\\hline&&&&\\
$\begin{array}{c}(5^-;{\bf 2+3k},\overline{{\bf o_{\textrm{max}}}})\\=(5^-;{\bf 2+3k})\&(5;\overline{{\bf o_{\textrm{max}}}}),\\k=0,\ldots,b-3\end{array}$&$\begin{array}{c}(5^-;{\bf 3+3k},\overline{{\bf 1}})\\=(5^-;{\bf 3+3k})\&(6;\overline{{\bf1}})\end{array}$&$(b-3-k)(a-1)\overline{0ba}$&$(b-3-k)a(b-1)\overline{a0b}$&(1)
\\\hline&&&&\\
$\begin{array}{c}(5^-;{\bf 3+3k},\overline{{\bf o_{\textrm{max}}}})\\=(5^-;{\bf 3+3k})\&(6;\overline{{\bf o_{\textrm{max}}}}),\\k=0,\ldots,b-3\end{array}$&$\begin{array}{c}(5^-;{\bf 1+3(k+1)},\overline{{\bf 1}})\\=(5^-;{\bf 1+3(k+1)})\&(7;\overline{{\bf1}})\end{array}$&$(b-3-k)a\overline{0ba}$&$(b-3-k)\overline{a0b}$&
\\\hline&&&&\\
$\begin{array}{c}(6^-;{\bf 1},\overline{{\bf o_{\textrm{max}}}})\\=(6^-;{\bf 1})\&(7;\overline{{\bf o_{\textrm{max}}}})\end{array}$&$\begin{array}{c}(6^-;{\bf 2},\overline{{\bf 1}})\\=(6^-;{\bf 2})\&(5;\overline{{\bf 1}})\end{array}$&$(b-1)\overline{a(a-b)0}$&$(b-2)(a-b+1)\overline{0a(a-b)}$&(10)
\\\hline&&&&\\
$\begin{array}{c}(6^-;{\bf 2},\overline{{\bf o_{\textrm{max}}}})\\=(6^-;{\bf 2})\&(5;\overline{{\bf o_{\textrm{max}}}})\end{array}$&$\begin{array}{c}(6^-;{\bf 3},\overline{{\bf 1}})\\=(6^-;{\bf 3})\&(6;\overline{{\bf 1}})\end{array}$&$(b-2)(a-1)\overline{0ba}$&$(b-2)a(b-1)\overline{a0b}$&(1)
\\\hline&&&&\\
$\begin{array}{c}(7^-;{\bf 1},\overline{{\bf o_{\textrm{max}}}})\\=(7^-;{\bf 1})\&(8;\overline{{\bf o_{\textrm{max}}}})\end{array}$&$\begin{array}{c}(7^-;{\bf 2},\overline{{\bf 1}})\\=(7^-;{\bf 2})\&(9;\overline{{\bf 1}})\end{array}$&$0a(a-1)\overline{(a-b)0a}$&$0(a-1)\overline{(a-b)0a}$& (2)
\\\hline&&&&\\
$\begin{array}{c}(7^-;{\bf 2},\overline{{\bf o_{\textrm{max}}}})\\=(7^-;{\bf 2})\&(9;\overline{{\bf o_{\textrm{max}}}})\end{array}$&$\begin{array}{c}(7^-;{\bf 3},\overline{{\bf 1}})\\=(7^-;{\bf 3})\&(10;\overline{{\bf 1}})\end{array}$&$0b(a-1)\overline{0ba}$&$0ba(b-1)\overline{a0b}$& (1)
\\\hline&&&&\\
$\begin{array}{c}(8^-;{\bf 1},\overline{{\bf o_{\textrm{max}}}})\\=(8^-;{\bf 1})\&(2;\overline{{\bf o_{\textrm{max}}}})\end{array}$&$\begin{array}{c}(8^-;{\bf 2},\overline{{\bf 1}})\\=(8^-;{\bf 2})\&(1;\overline{{\bf 1}})\end{array}$&$(a-b-1)0\overline{ba0}$&$(a-b)(b-1)\overline{a0b}$&  (11)
\\\hline&&&&\\
$\begin{array}{c}(9^-;{\bf 1+2k},\overline{{\bf o_{\textrm{max}}}})\\=(9^-;{\bf 1+2k})\&(1;\overline{{\bf o_{\textrm{max}}}}),\\k=0,\ldots,a-b-2\end{array}$&$\begin{array}{c}(9^-;{\bf 2+2k},\overline{{\bf 1}})\\=(9^-;{\bf 2+2k})\&(2;\overline{{\bf1}})\end{array}$&$k\overline{0a(a-b)}$&$k\overline{0a(a-b)}$&
\\\hline&&&&\\
$\begin{array}{c}(9^-;{\bf 2+2k},\overline{{\bf o_{\textrm{max}}}})\\=(9^-;{\bf 2+2k)})\&(2;\overline{{\bf o_{\textrm{max}}}}),\\k=0,\ldots,a-b-2\end{array}$&$\begin{array}{c}(9^-;{\bf 1+2(k+1)},\overline{{\bf 1}})\\=(9^-;{\bf 1+2(k+1)})\&(1;\overline{{\bf1}})\end{array}$&$k0\overline{ba0}$&$(k+1)(b-1)\overline{a0b}$& (5)
\\\hline&&&&\\
$\begin{array}{c}(10^-;{\bf 1},\overline{{\bf o_{\textrm{max}}}})\\=(10^-;{\bf 1})\&(11;\overline{{\bf o_{\textrm{max}}}})\end{array}$&$\begin{array}{c}(10^-;{\bf 2},\overline{{\bf 1}})\\=(10^-;{\bf 2})\&(12;\overline{{\bf 1}})\end{array}$&$0b\overline{a(a-b)0}$&$0(b-1)(a-b+1)\overline{0a(a-b)}$&
\\\hline&&&&\\
$\begin{array}{c}(12^-;{\bf 1},\overline{{\bf o_{\textrm{max}}}})\\=(12^-;{\bf 1})\&(6^-;\overline{{\bf o_{\textrm{max}}}})\end{array}$&$\begin{array}{c}(12^-;{\bf 2},\overline{{\bf 1}})\\=(12^-;{\bf 2})\&(5^-;\overline{{\bf 1}})\end{array}$&$a(b-2)\overline{a0b}$&$a(b-2)\overline{a0b}$&
\\\hline
\end{tabular}
\caption{Case $a\geq b+1,\;b\geq 2$, Condition~(\ref{Cond3}), $S=1^-,\ldots, 12^{-}$.}\label{table:abCont2B}
\end{center}
\end{table}

\begin{table}[h!]\scriptsize\begin{center}
\begin{tabular}{|c|c||c|c|c|}
\hline
 Walk 1& Walk 2& Sequence 1& Sequence 2 &Checking via Lemma~\ref{CharacBound}: see Item\ldots\\
\hline 
\hline &&&&\\
$(4;\overline{{\bf o_{\textrm{max}}}})$&$(5;\overline{{\bf 1}})$&$(a-1)0\overline{1a0}$&$a0\overline{a01}$& (12)
\\\hline&&&&\\
$(5;\overline{{\bf o_{\textrm{max}}}})$&$(6;\overline{{\bf 1}})$&$a\overline{0a(a-1)}$&$a\overline{0a(a-1)}$&
\\\hline&&&&\\
$(9;\overline{{\bf o_{\textrm{max}}}})$&$(10;\overline{{\bf 1}})$&$1(a-1)\overline{01a}$&$1a0\overline{a01}$&(13)
\\\hline&&&&\\
$(11;\overline{{\bf o_{\textrm{max}}}})$&$(12;\overline{{\bf 1}})$&$1\overline{a(a-1)0}$&$0a\overline{0a(a-1)}$& (14)
\\\hline&&&&\\
$(12;\overline{{\bf o_{\textrm{max}}}})$&$(1;\overline{{\bf 1}})$&$0a\overline{01a}$&$0\overline{a01}$&
\\\hline&&&&\\
$\begin{array}{c}(2;{\bf 2},\overline{{\bf o_{\textrm{max}}}})\\=(2;{\bf 2})\&(9;\overline{{\bf o_{\textrm{max}}}})\end{array}$&$\begin{array}{c}(2;{\bf 3},\overline{{\bf 1}})\\=(2;{\bf 3})\&(10;\overline{{\bf 1}})\end{array}$&$01(a-1)\overline{01a}$&$01a0\overline{a01}$& (13)
\\\hline&&&&\\
$\begin{array}{c}(3;{\bf 1},\overline{{\bf o_{\textrm{max}}}})\\=(3;{\bf 1})\&(11;\overline{{\bf o_{\textrm{max}}}})\end{array}$&$\begin{array}{c}(3;{\bf 2},\overline{{\bf 1}})\\=(3;{\bf 2})\&(12;\overline{{\bf 1}})\end{array}$&$01\overline{a(a-1)0}$&$00a\overline{0a(a-1)}$& (14)
\\\hline&&&&\\
$\begin{array}{c}(3;{\bf 2},\overline{{\bf o_{\textrm{max}}}})\\=(3;{\bf 2})\&(12;\overline{{\bf o_{\textrm{max}}}})\end{array}$&$\begin{array}{c}(3;{\bf 3},\overline{{\bf 1}})\\=(3;{\bf 3})\&(1;\overline{{\bf 1}})\end{array}$&$00a\overline{01a}$&$00\overline{a01}$&
\\\hline&&&&\\
$\begin{array}{c}(10;{\bf 1},\overline{{\bf o_{\textrm{max}}}})\\=(10;{\bf 1})\&(12^-;\overline{{\bf o_{\textrm{max}}}})\end{array}$&$\begin{array}{c}(10;{\bf 2},\overline{{\bf 1}})\\=(10;{\bf 2})\&(11^-;\overline{{\bf 1}})\end{array}$&$1a0\overline{a(a-1)0}$&$1a\overline{0a(a-1)}$& 
\\\hline&&&&\\
$\begin{array}{c}(7^-;{\bf 2},\overline{{\bf o_{\textrm{max}}}})\\=(7^-;{\bf 2})\&(9;\overline{{\bf o_{\textrm{max}}}})\end{array}$&$\begin{array}{c}(7^-;{\bf 3},\overline{{\bf 1}})\\=(7^-;{\bf 3})\&(10;\overline{{\bf 1}})\end{array}$&$01(a-1)\overline{01a}$&$01a0\overline{a01}$& (13)
\\\hline
\end{tabular}
\caption{Case $a\geq 2,\;b=1$. See also Tables~\ref{table:abCont} to~\ref{table:abCont2B} with $b=1$.}\label{table:ab1Cont}
\end{center}
\end{table}

\begin{table}[h!]\scriptsize\begin{center}
\begin{tabular}{|c|c||c|c|c|}
\hline
 Walk 1& Walk 2& Sequence 1& Sequence 2 &Checking via Lemma~\ref{CharacBound}: see Item\ldots\\
\hline &&&&\\
$(1;\overline{{\bf o_{\textrm{max}}}})$&$(2;\overline{{\bf 1}})$&$\overline{0a0}$&$0\overline{a00}$&
\\\hline&&&&\\
$(2;\overline{{\bf o_{\textrm{max}}}})$&$(3;\overline{{\bf 1}})$&$0\overline{aa0}$&$0\overline{aa0}$&
\\\hline&&&&\\
$(3;\overline{{\bf o_{\textrm{max}}}})$&$(4;\overline{{\bf 1}})$&$0\overline{0a0}$&$0\overline{0a0}$&
\\\hline&&&&\\
$(4;\overline{{\bf o_{\textrm{max}}}})$&$(5;\overline{{\bf 1}})$&$1\overline{0a0}$&$1\overline{0a0}$&
\\\hline&&&&\\
$(5;\overline{{\bf o_{\textrm{max}}}})$&$(6;\overline{{\bf 1}})$&$(a-1)\overline{0aa}$&$a(a-1)\overline{a0a}$&(15)
\\\hline&&&&\\
$(6;\overline{{\bf o_{\textrm{max}}}})$&$(7;\overline{{\bf 1}})$&$a\overline{0aa}$&$\overline{a0a}$&
\\\hline&&&&\\
$(7;\overline{{\bf o_{\textrm{max}}}})$&$(8;\overline{{\bf 1}})$&$\overline{a00}$&$\overline{a00}$&
\\\hline&&&&\\
$(8;\overline{{\bf o_{\textrm{max}}}})$&$(9;\overline{{\bf 1}})$&$a(a-1)\overline{0aa}$&$aa(a-1)\overline{a0a}$&(15)
\\\hline&&&&\\
$(9;\overline{{\bf o_{\textrm{max}}}})$&$(10;\overline{{\bf 1}})$&$\overline{aa0}$&$\overline{aa0}$&
\\\hline&&&&\\
$(10;\overline{{\bf o_{\textrm{max}}}})$&$(11;\overline{{\bf 1}})$&$a\overline{a00}$&$(a-1)1\overline{0a0}$& (16)
\\\hline&&&&\\
$(11;\overline{{\bf o_{\textrm{max}}}})$&$(12;\overline{{\bf 1}})$&$(a-1)a\overline{0aa}$&$(a-1)\overline{a0a}$&
\\\hline
\end{tabular}

\caption{Case $a=b\geq 2$, Conditions~(\ref{Cond1}) and~(\ref{Cond2}).}\label{table:aeqbCont}
\end{center}
\end{table}

\begin{table}[h!]\scriptsize
\begin{center}
\begin{tabular}{|c|c||c|c|c|}
\hline
 Walk 1& Walk 2& Sequence 1& Sequence 2 & Item \\
\hline &&&&\\
$\begin{array}{c}(1;{\bf 1+3k},\overline{{\bf o_{\textrm{max}}}})\\=(1;{\bf 1+3k})\&(7;\overline{{\bf o_{\textrm{max}}}}),\\k=0,\ldots,a-2\end{array}$&$\begin{array}{c}(1;{\bf 2+3k},\overline{{\bf 1}})\\=(1;{\bf 2+3k})\&(5;\overline{{\bf1}})\end{array}$&$(a-1-k)\overline{a00}$&$(a-2-k)1\overline{0a0}$&(17)
\\\hline&&&&\\
$\begin{array}{c}(1;{\bf 2+3k},\overline{{\bf o_{\textrm{max}}}})\\=(1;{\bf 2+3k})\&(5;\overline{{\bf o_{\textrm{max}}}}),\\k=0,\ldots,a-2\end{array}$&$\begin{array}{c}(1;{\bf 3+3k},\overline{{\bf 1}})\\=(1;{\bf 3+3k})\&(6;\overline{{\bf1}})\end{array}$&$(a-2-k)(a-1)\overline{0aa}$&$(a-2-k)a(a-1)\overline{a0a}$& (15)
\\\hline&&&&\\
$\begin{array}{c}(1;{\bf 3+3k},\overline{{\bf o_{\textrm{max}}}})\\=(1;{\bf 3+3k})\&(6;\overline{{\bf o_{\textrm{max}}}}),\\k=0,\ldots,a-2\end{array}$&$\begin{array}{c}(1;{\bf 1+3(k+1)},\overline{{\bf 1}})\\=(1;{\bf 1+3(k+1)})\&(7;\overline{{\bf1}})\end{array}$&$(a-2-k)a\overline{0a}$&$(a-2-k)\overline{a0a}$&
\\\hline&&&&\\
$\begin{array}{c}(2;{\bf 1},\overline{{\bf o_{\textrm{max}}}})\\=(2;{\bf 1})\&(8;\overline{{\bf o_{\textrm{max}}}})\end{array}$&$\begin{array}{c}(2;{\bf 2},\overline{{\bf 1}})\\=(2;{\bf 2})\&(9;\overline{{\bf 1}})\end{array}$&$0a(a-1)\overline{0aa}$&$0aa(a-1)\overline{a0a}$&(15)
\\\hline&&&&\\
$\begin{array}{c}(3;{\bf 1},\overline{{\bf o_{\textrm{max}}}})\\=(3;{\bf 1})\&(10;\overline{{\bf o_{\textrm{max}}}})\end{array}$&$\begin{array}{c}(3;{\bf 2},\overline{{\bf 1}})\\=(3;{\bf 2})\&(11;\overline{{\bf 1}})\end{array}$&$0a\overline{a00}$&$0(a-1)1\overline{0a0}$& (16)
\\\hline&&&&\\
$\begin{array}{c}(3;{\bf 2},\overline{{\bf o_{\textrm{max}}}})\\=(3;{\bf 2})\&(11;\overline{{\bf o_{\textrm{max}}}})\end{array}$&$\begin{array}{c}(3;{\bf 3},\overline{{\bf 1}})\\=(3;{\bf 3})\&(1;\overline{{\bf 1}})\end{array}$&$0(a-1)a\overline{0aa}$&$0(a-1)\overline{a0a}$&
\\\hline&&&&\\
$\begin{array}{c}(4;{\bf 1},\overline{{\bf o_{\textrm{max}}}})\\=(4;{\bf 1})\&(2;\overline{{\bf o_{\textrm{max}}}})\end{array}$&$\begin{array}{c}(4;{\bf 2},\overline{{\bf 1}})\\=(4;{\bf 2})\&(1;\overline{{\bf 1}})\end{array}$&$00\overline{aa0}$&$1(a-1)\overline{a0a}$& (17)
\\\hline&&&&\\
$\begin{array}{c}(5;{\bf 1+3k},\overline{{\bf o_{\textrm{max}}}})\\=(5;{\bf 1+3k})\&(7^-;\overline{{\bf o_{\textrm{max}}}}),\\k=0,\ldots,a-3\end{array}$&$\begin{array}{c}(5;{\bf 2+3k},\overline{{\bf 1}})\\=(5;{\bf 2+3k})\&(6^-;\overline{{\bf1}})\end{array}$&$(k+1)\overline{0aa}$&$(k+2)(a-1)\overline{a0a}$& (17)
\\\hline&&&&\\
$\begin{array}{c}(5;{\bf 2+3k},\overline{{\bf o_{\textrm{max}}}})\\=(5;{\bf 2+3k})\&(6^-;\overline{{\bf o_{\textrm{max}}}}),\\k=0,\ldots,a-3\end{array}$&$\begin{array}{c}(5;{\bf 3+3k},\overline{{\bf 1}})\\=(5;{\bf 3+3k})\&(5^-;\overline{{\bf1}})\end{array}$&$(k+2)(a-2)\overline{a0a}$&$(k+2)(a-2)\overline{a0a}$&
\\\hline&&&&\\
$\begin{array}{c}(5;{\bf 3+3k},\overline{{\bf o_{\textrm{max}}}})\\=(5;{\bf 3+3k})\&(5^-;\overline{{\bf o_{\textrm{max}}}}),\\k=0,\ldots,a-3\end{array}$&$\begin{array}{c}(5;{\bf 1+3(k+1)},\overline{{\bf 1}})\\=(5;{\bf 1+3(k+1)})\&(7^-;\overline{{\bf1}})\end{array}$&$(k+2)\overline{0a0}$&$(k+2)\overline{0a0}$&
\\\hline&&&&\\
$\begin{array}{c}(6;{\bf 1},\overline{{\bf o_{\textrm{max}}}})\\=(6;{\bf 1})\&(6^-;\overline{{\bf o_{\textrm{max}}}})\end{array}$&$\begin{array}{c}(6;{\bf 2},\overline{{\bf 1}})\\=(6;{\bf 2})\&(5^-;\overline{{\bf 1}})\end{array}$&$a(a-2)\overline{a0a}$&$a(a-2)\overline{a0a}$&
\\\hline&&&&\\
$\begin{array}{c}(6;{\bf 2},\overline{{\bf o_{\textrm{max}}}})\\=(6;{\bf 2})\&(5^-;\overline{{\bf o_{\textrm{max}}}})\end{array}$&$\begin{array}{c}(6;{\bf 3},\overline{{\bf 1}})\\=(6;{\bf 3})\&(7^-;\overline{{\bf 1}})\end{array}$&$a\overline{0a0}$&$a\overline{0a0}$&
\\\hline&&&&\\
$\begin{array}{c}(7;{\bf 1},\overline{{\bf o_{\textrm{max}}}})\\=(7;{\bf 1})\&(9^-;\overline{{\bf o_{\textrm{max}}}})\end{array}$&$\begin{array}{c}(7;{\bf 2},\overline{{\bf 1}})\\=(7;{\bf 2})\&(8^-;\overline{{\bf 1}})\end{array}$&$a0(a-1)\overline{a0a}$&$a0(a-1)\overline{a0a}$&
\\\hline&&&&\\
$\begin{array}{c}(9;{\bf 1},\overline{{\bf o_{\textrm{max}}}})\\=(9;{\bf 1})\&(11^-;\overline{{\bf o_{\textrm{max}}}})\end{array}$&$\begin{array}{c}(9;{\bf 2},\overline{{\bf 1}})\\=(9;{\bf 2})\&(10^-;\overline{{\bf 1}})\end{array}$&$aa0\overline{a00}$&$aa0\overline{a00}$&
\\\hline&&&&\\
$\begin{array}{c}(11;{\bf 1},\overline{{\bf o_{\textrm{max}}}})\\=(11;{\bf 1})\&(5;\overline{{\bf o_{\textrm{max}}}})\end{array}$&$\begin{array}{c}(11;{\bf 2},\overline{{\bf 1}})\\=(11;{\bf 2})\&(6;\overline{{\bf 1}})\end{array}$&$(a-1)(a-1)\overline{0aa}$&$(a-1)a(a-1)\overline{a0a}$& (15)
\\\hline
\end{tabular}
\caption{Case $a= b\geq 2$, Condition~(\ref{Cond3}), $S=1,\ldots, 11$.}\label{table:aeqbCont2A}
\end{center}
\end{table}

\begin{table}[h!]\scriptsize\begin{center}
\begin{tabular}{|c|c||c|c|c|}
\hline
 Walk 1& Walk 2& Sequence 1& Sequence 2 & Item\\
\hline &&&&\\
$\begin{array}{c}(1^-;{\bf 1+3k},\overline{{\bf o_{\textrm{max}}}})\\=(1^-;{\bf 1+3k})\&(7^-;\overline{{\bf o_{\textrm{max}}}}),\\k=0,\ldots,a-2\end{array}$&$\begin{array}{c}(1^-;{\bf 2+3k},\overline{{\bf 1}})\\=(1^-;{\bf 2+3k})\&(6^-;\overline{{\bf1}})\end{array}$&$k\overline{0aa}$&$(k+1)(a-1)\overline{a0a}$& (18)
\\\hline&&&&\\
$\begin{array}{c}(1^-;{\bf 2+3k},\overline{{\bf o_{\textrm{max}}}})\\=(1^-;{\bf 2+3k})\&(6^-;\overline{{\bf o_{\textrm{max}}}}),\\k=0,\ldots,a-2\end{array}$&$\begin{array}{c}(1^-;{\bf 3+3k},\overline{{\bf 1}})\\=(1^-;{\bf 3+3k})\&(5^-;\overline{{\bf1}})\end{array}$&$(k+1)(a-2)a\overline{0aa}$&$(k+1)(a-2)\overline{a0a}$&
\\\hline&&&&\\
$\begin{array}{c}(1^-;{\bf 3+3k},\overline{{\bf o_{\textrm{max}}}})\\=(1^-;{\bf 3+3k})\&(5^-;\overline{{\bf o_{\textrm{max}}}}),\\k=0,\ldots,a-2\end{array}$&$\begin{array}{c}(1^-;{\bf 1+3(k+1)},\overline{{\bf 1}})\\=(1^-;{\bf 1+3(k+1)})\&(7^-;\overline{{\bf1}})\end{array}$&$(k+1)0\overline{a00}$&$(k+1)\overline{0a0}$&
\\\hline&&&&\\
$\begin{array}{c}(5^-;{\bf 1+3k},\overline{{\bf o_{\textrm{max}}}})\\=(5^-;{\bf 1+3k})\&(7;\overline{{\bf o_{\textrm{max}}}}),\\k=0,\ldots,a-3\end{array}$&$\begin{array}{c}(5^-;{\bf 2+3k},\overline{{\bf 1}})\\=(5^-;{\bf 2+3k})\&(5;\overline{{\bf1}})\end{array}$&$(a-2-k)\overline{a00}$&$(a-3-k)1\overline{0a0}$& (19)
\\\hline&&&&\\
$\begin{array}{c}(5^-;{\bf 2+3k},\overline{{\bf o_{\textrm{max}}}})\\=(5^-;{\bf 2+3k})\&(5;\overline{{\bf o_{\textrm{max}}}}),\\k=0,\ldots,a-3\end{array}$&$\begin{array}{c}(5^-;{\bf 3+3k},\overline{{\bf 1}})\\=(5^-;{\bf 3+3k})\&(6;\overline{{\bf1}})\end{array}$&$(a-3-k)(a-1)\overline{0aa}$&$(a-3-k)a(a-1)\overline{a0a}$& (15)
\\\hline&&&&\\
$\begin{array}{c}(5^-;{\bf 3+3k},\overline{{\bf o_{\textrm{max}}}})\\=(5^-;{\bf 3+3k})\&(6;\overline{{\bf o_{\textrm{max}}}}),\\k=0,\ldots,a-3\end{array}$&$\begin{array}{c}(5^-;{\bf 1+3(k+1)},\overline{{\bf 1}})\\=(5^-;{\bf 1+3(k+1)})\&(7;\overline{{\bf1}})\end{array}$&$(a-3-k)a\overline{0aa}$&$(a-3-k)\overline{a0a}$&
\\\hline&&&&\\
$\begin{array}{c}(6^-;{\bf 1},\overline{{\bf o_{\textrm{max}}}})\\=(6^-;{\bf 1})\&(7;\overline{{\bf o_{\textrm{max}}}})\end{array}$&$\begin{array}{c}(6^-;{\bf 2},\overline{{\bf 1}})\\=(6^-;{\bf 2})\&(5;\overline{{\bf 1}})\end{array}$&$(a-1)\overline{a00}$&$(a-2)1\overline{0a0}$& (20)
\\\hline&&&&\\
$\begin{array}{c}(6^-;{\bf 2},\overline{{\bf o_{\textrm{max}}}})\\=(6^-;{\bf 2})\&(5;\overline{{\bf o_{\textrm{max}}}})\end{array}$&$\begin{array}{c}(6^-;{\bf 3},\overline{{\bf 1}})\\=(6^-;{\bf 3})\&(6;\overline{{\bf 1}})\end{array}$&$(a-2)(a-1)\overline{0aa}$&$(a-2)a(a-1)\overline{a0a}$& (15)
\\\hline&&&&\\
$\begin{array}{c}(7^-;{\bf 1},\overline{{\bf o_{\textrm{max}}}})\\=(7^-;{\bf 1})\&(8;\overline{{\bf o_{\textrm{max}}}})\end{array}$&$\begin{array}{c}(7^-;{\bf 2},\overline{{\bf 1}})\\=(7^-;{\bf 2})\&(9;\overline{{\bf 1}})\end{array}$&$0a(a-1)\overline{0aa}$&$0aa(a-1)\overline{a0a}$& (15)
\\\hline&&&&\\
$\begin{array}{c}(9^-;{\bf 1},\overline{{\bf o_{\textrm{max}}}})\\=(9^-;{\bf 1})\&(10;\overline{{\bf o_{\textrm{max}}}})\end{array}$&$\begin{array}{c}(9^-;{\bf 2},\overline{{\bf 1}})\\=(9^-;{\bf 2})\&(11;\overline{{\bf 1}})\end{array}$&$0a\overline{a00}$&$0(a-1)1\overline{0a0}$& (16)
\\\hline&&&&\\
$\begin{array}{c}(11^-;{\bf 1},\overline{{\bf o_{\textrm{max}}}})\\=(11^-;{\bf 1})\&(6^-;\overline{{\bf o_{\textrm{max}}}})\end{array}$&$\begin{array}{c}(11^-;{\bf 2},\overline{{\bf 1}})\\=(11^-;{\bf 2})\&(5^-;\overline{{\bf 1}})\end{array}$&$a(a-2)a\overline{0aa}$&$a(a-2)\overline{a0a}$&
\\\hline
\end{tabular}
\caption{Case $a=b\geq 2$, Condition~(\ref{Cond3}), $S=1^-,\ldots, 12^{-}$.}\label{table:aeqbCont2B}
\end{center}
\end{table}

\begin{table}[h!]\scriptsize\begin{center}
\begin{tabular}{|c|c||c|c|c|}
\hline
 Walk 1& Walk 2& Sequence 1& Sequence 2 & Checking via Lemma~\ref{CharacBound}: see Item\ldots\\
\hline 
$(4;\overline{{\bf o_{\textrm{max}}}})$&$(5;\overline{{\bf 1}})$&$00\overline{110}$&$10\overline{101}$&(22)
\\\hline&&&&\\
$(5;\overline{{\bf o_{\textrm{max}}}})$&$(6;\overline{{\bf 1}})$&$1\overline{010}$&$1\overline{010}$&
\\\hline&&&&\\
$(10;\overline{{\bf o_{\textrm{max}}}})$&$(11;\overline{{\bf 1}})$&$1\overline{100}$&$01\overline{010}$& (23)
\\\hline&&&&\\
$\begin{array}{c}(3;{\bf 1},\overline{{\bf o_{\textrm{max}}}})\\=(3;{\bf 1})\&(10;\overline{{\bf o_{\textrm{max}}}})\end{array}$&$\begin{array}{c}(3;{\bf 2},\overline{{\bf 1}})\\=(3;{\bf 2})\&(11;\overline{{\bf 1}})\end{array}$&$01\overline{100}$&$001\overline{010}$& (23)
\\\hline&&&&\\
$\begin{array}{c}(9;{\bf 1},\overline{{\bf o_{\textrm{max}}}})\\=(9;{\bf 1})\&(11^-;\overline{{\bf o_{\textrm{max}}}})\end{array}$&$\begin{array}{c}(9;{\bf 2},\overline{{\bf 1}})\\=(9;{\bf 2})\&(10^-;\overline{{\bf 1}})\end{array}$&$110\overline{100}$&$11\overline{010}$&
\\\hline&&&&\\
$\begin{array}{c}(9^-;{\bf 1},\overline{{\bf o_{\textrm{max}}}})\\=(9^-;{\bf 1})\&(10;\overline{{\bf o_{\textrm{max}}}})\end{array}$&$\begin{array}{c}(9^-;{\bf 2},\overline{{\bf 1}})\\=(9^-;{\bf 2})\&(11;\overline{{\bf 1}})\end{array}$&$01\overline{100}$&$001\overline{010}$& (23)
\\\hline
\end{tabular}
\caption{Case $a=b=1$.} \label{table:a1b1Cont}
\end{center}
\end{table}

\pagebreak

\section{Details for the proof of Theorem~\ref{DNDTheo} (disk-like tiles).}\label{app:TheoDisk}
We depict the automata $\mathcal{A}^\psi$ and $\mathcal{A}^{sl}$ for the remaining cases:
\begin{itemize}
\item $b=1,a\geq 2$, Figures~\ref{fig:APSIab1} and~\ref{fig:ASLab1};
\item $a=b\leq3$ (recall that $2b-a\leq 3$), Figures~\ref{fig:APSIaeqb} and~\ref{fig:ASLaeqb}, for $a\in\{2,3\}$, as well as Figures~\ref{fig:APSIa1b1} and~\ref{fig:ASLa1b1} for $a=b=1$. 
 \end{itemize}
Again, in $\mathcal{A}^{\psi}$ as well as in $\mathcal{A}^{sl}$, no more pairs $(w,w')$ than the pairs given in~(\ref{identifpairs}) are found. Thus the same conclusion as in the core of the proof of Theorem~\ref{DNDTheo} (disk-like tiles) applies.

\begin{figure}[h!]
\begin{center}
\includegraphics[width=145mm,height=130mm]{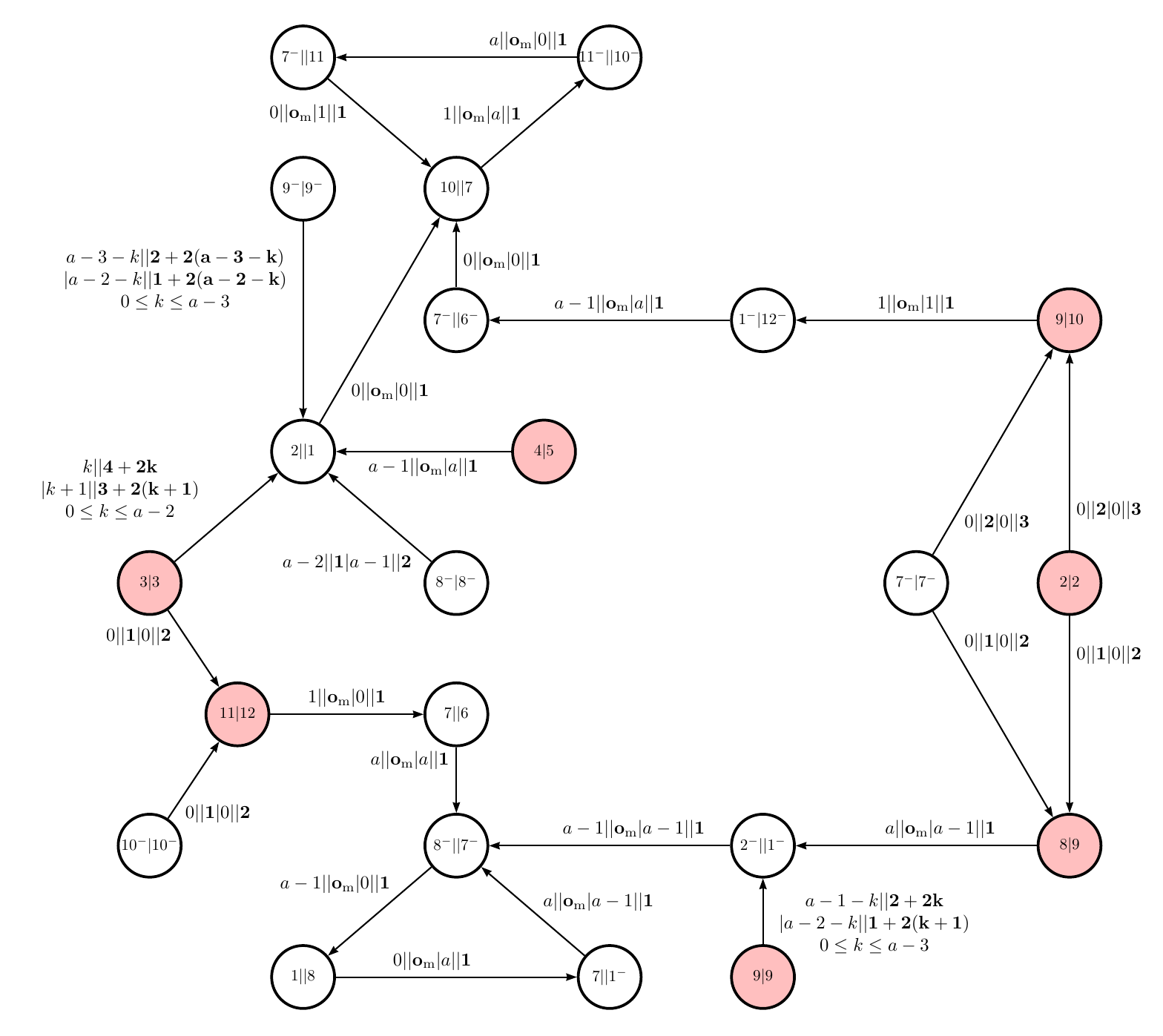}\end{center}
\caption{$\mathcal{A}^\psi$ for $b=1, a\geq 2$.}\label{fig:APSIab1}
\end{figure}

\begin{figure}[h!]
\begin{center}
\includegraphics[width=140mm,height=115mm]{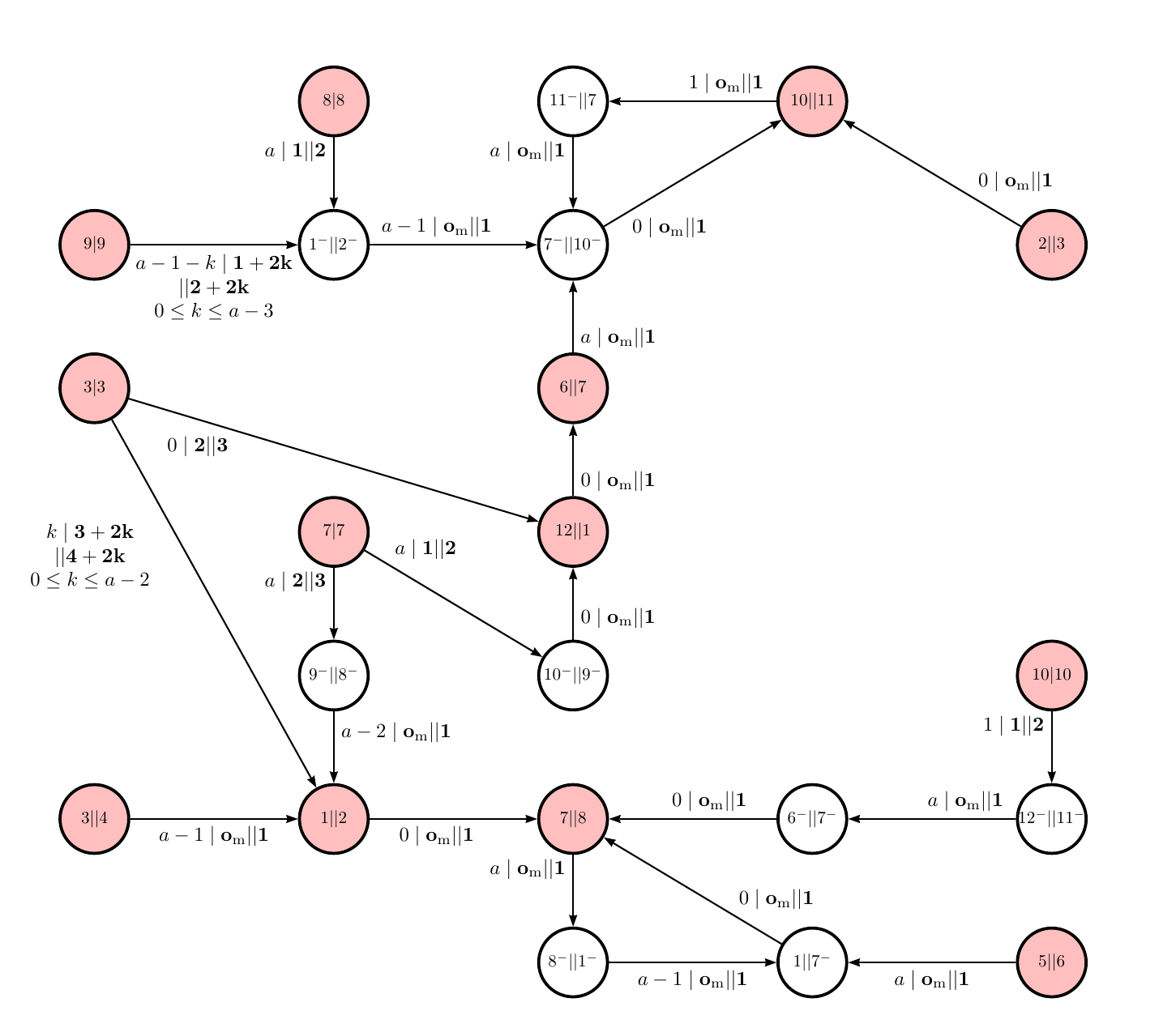}
\end{center}
\caption{$\mathcal{A}^{sl}$ for $b=1, a\geq 2$.}\label{fig:ASLab1}
\end{figure}

\begin{figure}[h!]
\begin{center}
\includegraphics[width=150mm,height=125mm]{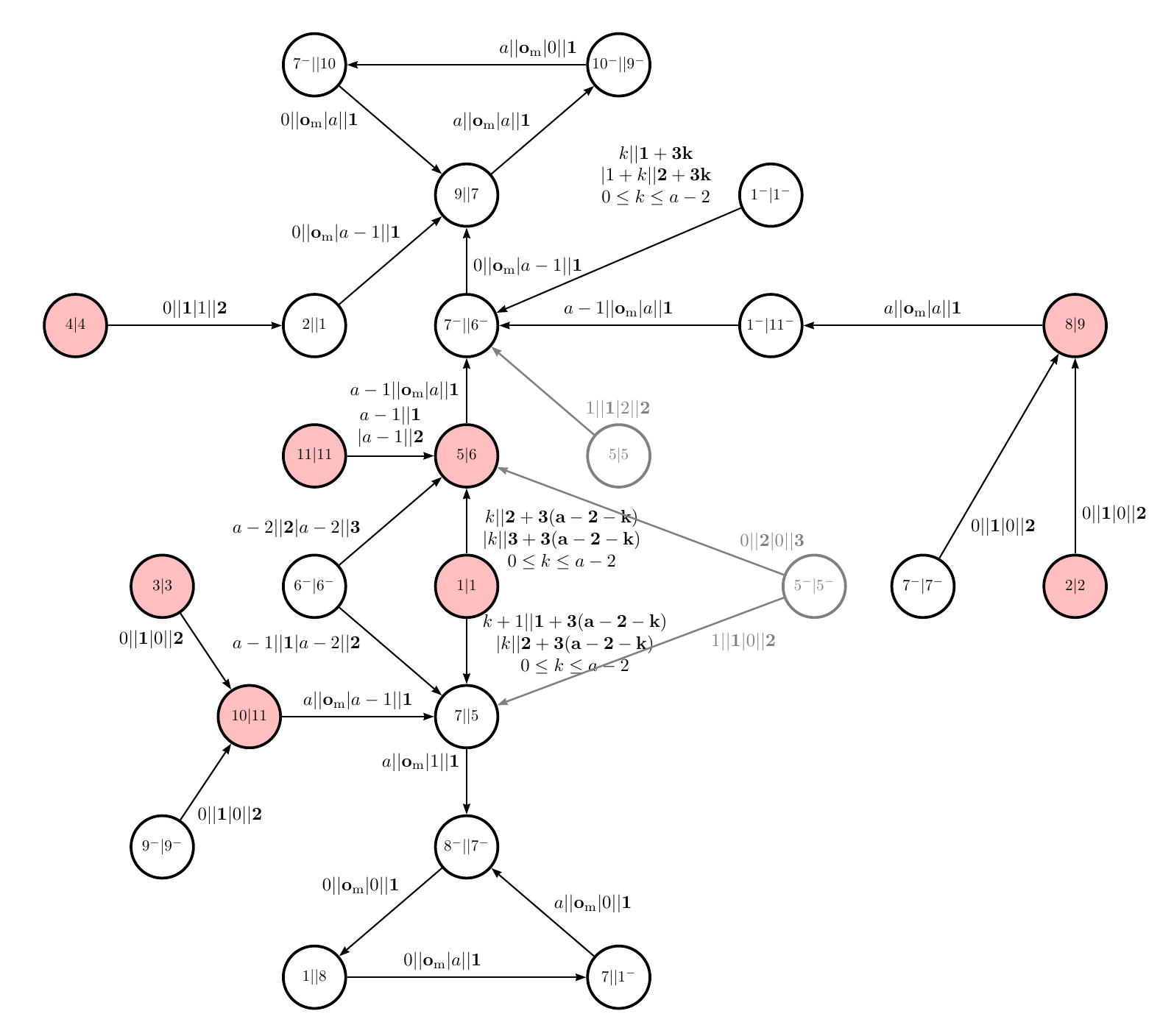}
\end{center}
\caption{$\mathcal{A}^\psi$ for $2\leq a=b\leq 3$ (dimmed states and edges only for $a=3$).}\label{fig:APSIaeqb}
\end{figure}

\begin{figure}[h!]
\begin{center}

\includegraphics[width=140mm,height=125mm]{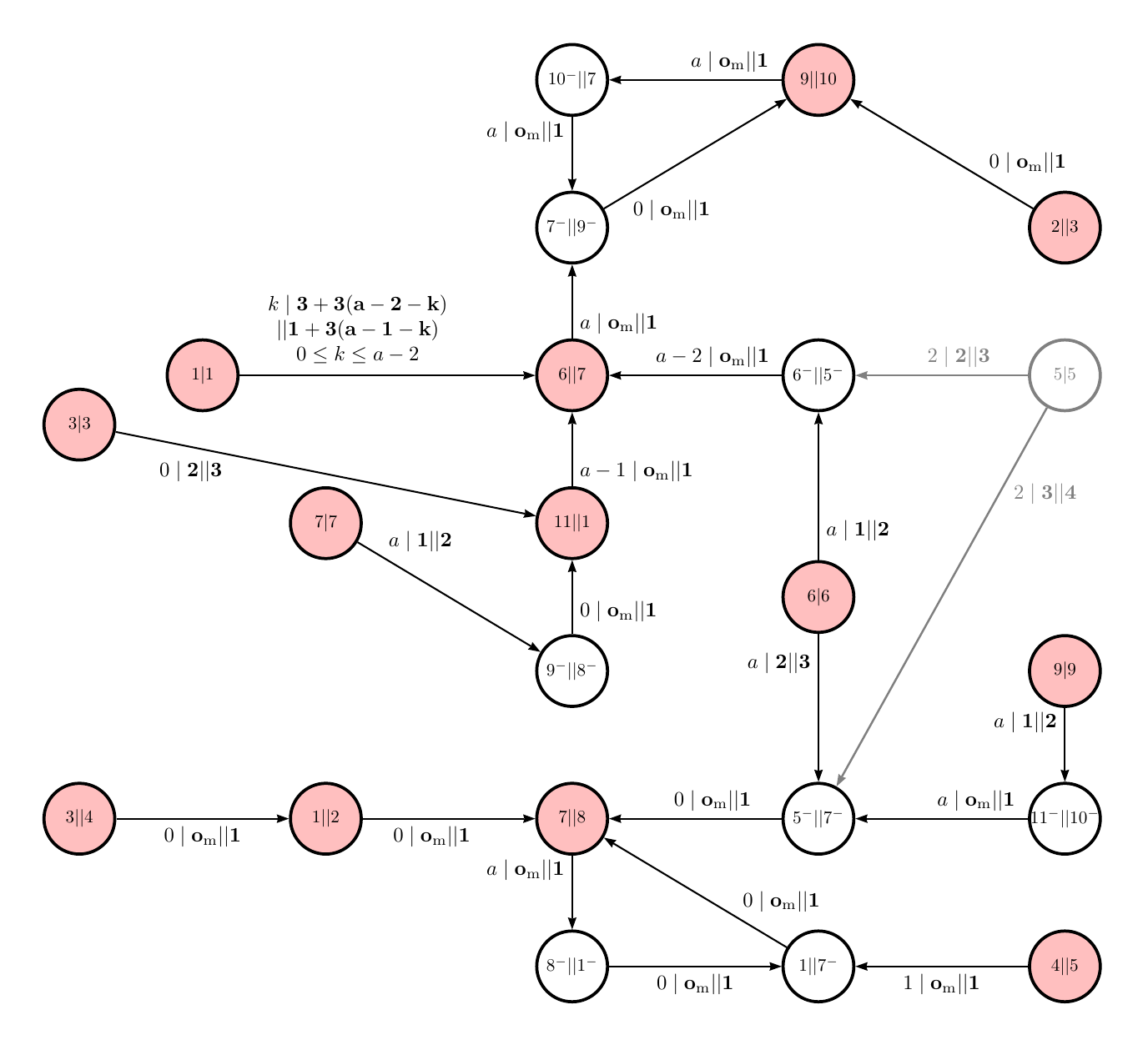}
\end{center}
\caption{$\mathcal{A}^{sl}$ for $2\leq a=b\leq 3$ (dimmed state and edges only for $a=3$).}\label{fig:ASLaeqb}
\end{figure}

\begin{figure}[h!]
\begin{center}
\includegraphics[width=130mm,height=50mm]{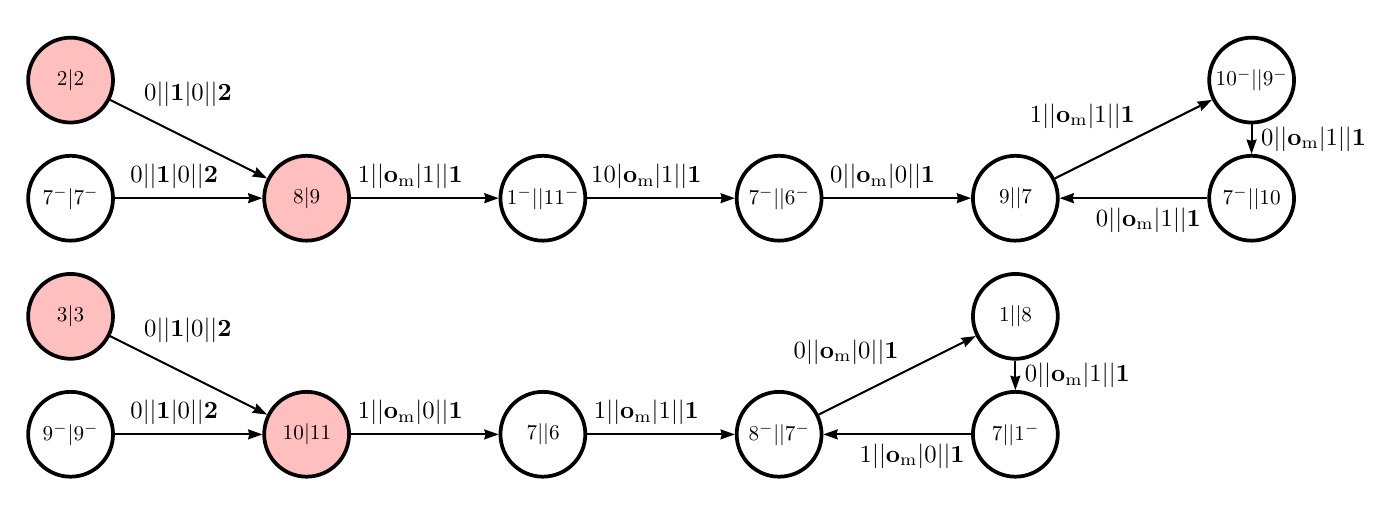}
\end{center}
\caption{$\mathcal{A}^\psi$   for $a=b=1$.}\label{fig:APSIa1b1}
\end{figure}

\begin{figure}[h!]
\begin{center}
\includegraphics[width=125mm,height=50mm]{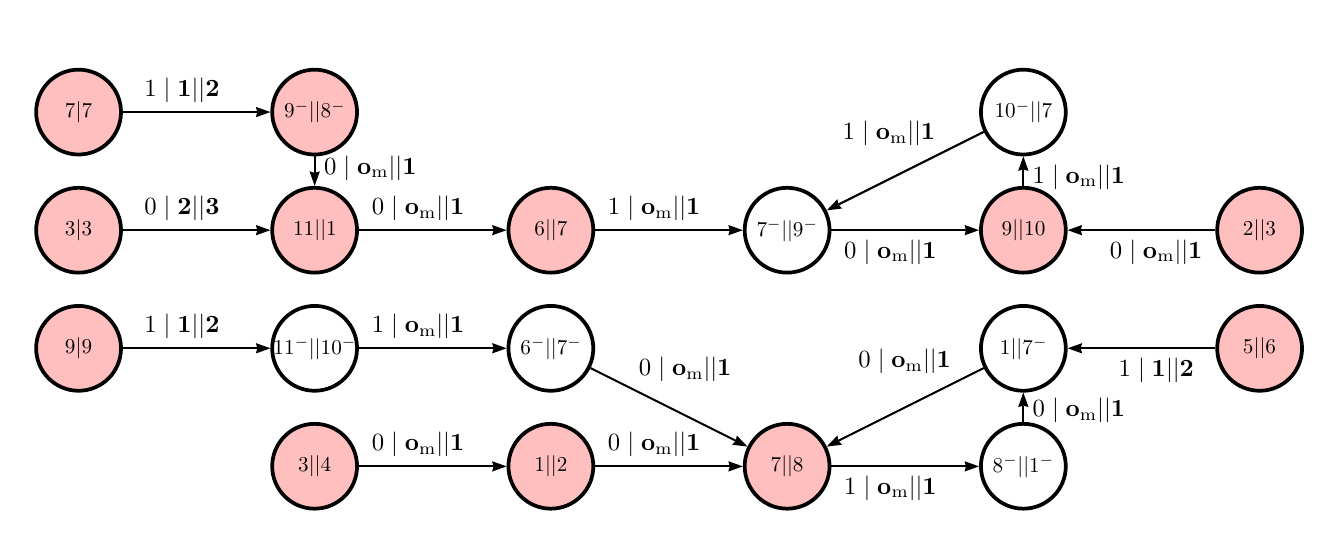}\end{center}

\caption{$\mathcal{A}^{sl}$  for $a=b=1$.}\label{fig:ASLa1b1}
\end{figure}

\bibliographystyle{plain}
\bibliography{biblio}

\end{document}